
\documentclass[11pt]{article}
\usepackage[latin1]{inputenc}
\usepackage[T1]{fontenc}
\usepackage{amsmath}
\usepackage{amsthm}
\usepackage{amssymb}
\usepackage{mathrsfs}
\usepackage{dsfont}
\usepackage{stmaryrd}
\usepackage{amscd}
\usepackage{float}
\usepackage[a4paper]{geometry}
\usepackage{graphicx}
\usepackage{subfigure}
\usepackage{epsfig}
\usepackage{lmodern}
\usepackage{pstricks}
\usepackage[colorlinks=true]{hyperref} 
\usepackage{verbatim}
 \usepackage[colorinlistoftodos,french]{todonotes}

\title{\bf The missing $\boldsymbol{(A,D,r)}$ diagram}
\author{Alexandre Delyon\footnote{Universit\'e de Lorraine, CNRS, Institut Elie Cartan de Lorraine, BP 70239 54506 Vand\oe uvre-l\`es-Nancy Cedex, 
France ({\tt alexandre.delyon@univ-lorraine.fr}).}
	\and Antoine Henrot\footnote{Universit\'e de Lorraine, CNRS, Institut Elie Cartan de Lorraine, BP 70239 54506 Vand\oe uvre-l\`es-Nancy Cedex, France ({\tt antoine.henrot@univ-lorraine.fr}).}
	\and Yannick Privat\footnote{IRMA, Universit\'e de Strasbourg, CNRS UMR 7501, Inria, 7 rue Ren\'e Descartes, 67084 Strasbourg, France ({\tt yannick.privat@unistra.fr}).}
}

\geometry{left=2.3cm,right=1.9cm,bottom=2.5cm,top=2.9cm}

\newcommand{\R}{\mathbb{R}}
\newcommand{\N}{\mathbb{N}}

\newcommand{\eps}{\varepsilon}

\newcommand{\h}{\mathcal{H}^{n-1}}

\makeatletter
\DeclareFontFamily{U}{tipa}{}
\DeclareFontShape{U}{tipa}{m}{n}{<->tipa10}{}
\newcommand{\arc@char}{{\usefont{U}{tipa}{m}{n}\symbol{62}}}%

\newcommand{\arc}[1]{\mathpalette\arc@arc{#1}}

\newcommand{\arc@arc}[2]{%
  \sbox0{$\m@th#1#2$}%
  \vbox{
    \hbox{\resizebox{\wd0}{\height}{\arc@char}}
    \nointerlineskip
    \box0
  }%
}
\makeatother

\renewcommand{\geq}{\geqslant}
\renewcommand{\leq}{\leqslant}

\newtheorem{theorem}{Theorem}

\newtheorem{proposition}{Proposition}

\newtheorem{definition}{Definition}
\newtheorem{lemma}{Lemma}
\theoremstyle{definition}
\theoremstyle{definition}\newtheorem{remark}{Remark}

\begin{document}
\maketitle

\begin{abstract}
In this paper we are interested in "optimal" universal geometric inequalities involving the area, diameter and inradius of convex bodies. The term 
"optimal" is to be understood in the following sense: we tackle the issue 
of minimizing/maximizing the Lebesgue measure of a convex body among all convex sets of given diameter and inradius.  {The minimization problem in the two-dimensional case has been solved in a previous work, by M.~Hernandez-Cifre and G.~Salinas. In this article, we provide a generalization to the $n$-dimensional case based on a different approach, as well as the complete solving of the maximization problem in the two-dimensional case.} This allows us to completely determine the so-called 2-dimensional Blaschke-Santal\'o diagram 
for planar convex bodies with respect to the three magnitudes area, diameter and inradius in euclidean spaces, denoted $(A,D,r)$. Such a diagram is used to determine the  range of possible values of the area of convex sets depending on their diameter and inradius. Although this question of convex geometry appears quite elementary, it had not been answered until now. This is likely related to the fact that the diagram description uses unexpected particular convex sets, such as a kind of smoothed nonagon inscribed in an equilateral triangle.
\end{abstract}

\noindent\textbf{Keywords:} shape optimization, diameter, inradius, convex geometry, 2-cap bodies, Blaschke-Santal\'o diagram.

\medskip

\noindent\textbf{AMS classification:} 49Q10, 52A40, 28A75, 49K15.

\section{Introduction}

Let $n\in \N^*$. In the whole article, we will denote by $\mathcal K_n$ the set of all convex bodies (i.e. compact convex sets with non-empty interior) in $\R^n$. 

In convex geometry, the search for optimal inequalities between the six
standard geometrical quantities which are the surface $A$ (or volume $V$), the perimeter $P$, the diameter $D$, the inradius $r$, the circumradius 
$R$ and the (minimal) width\footnote{In other words, the smallest distance between any two different parallel supporting hyperplanes of a convex body.} $w$ of any convex body,
is a very old activity that dates back to the work of W. Blaschke (\cite{Blaschke1},
\cite{Blaschke2}) and has been extensively studied by L. Santal\'o in \cite{Santalo}.
For a list of such inequalities known in 2000, we refer to the classical review paper \cite{awyong}.The general idea is to consider three of the aforementioned quantities $(q_1,q_2,q_3)$ and to determine a complete system of
inequalities relating them, in other words a system of inequalities describing the set
$$
\{(q_1(K),q_2(K),q_3(K)), \ K\in \mathcal K_n\}.
$$ 
In general, it is convenient to summarize it into a diagram, usually called {\it Blaschke-Santal\'o diagram}. It represents the set of possible values of the triple that can be reached by a convex set (suitably normalized).
Among the 20 possible choices of this three geometric quantities, L. Santal\'o completely solved in his work the 6 cases $(A,P,w)$, $(A,P,r)$, $(A,P,R)$, $(A,D,w)$, $(P,D,w)$, $(D,r,R)$ and gave a partial solution to $(D,R,w)$ and $(r,R,w)$. These two last cases were eventually solved by M. Hernandez Cifre and S. Segura Gomis in \cite{Hernandez00DCG}. In a series 
of papers with collaborators, M. Hernandez Cifre
has also been able to prove complete systems of inequalities in the cases 

$(A,D,R)$, $(P,D,R)$ \cite{Hernandez02Arch}, in the cases 
$(A,r,R)$, $(P,r,R)$ \cite{Hernandez03Mon} and finally
in the case $(D,r,w)$ \cite{Hernandez00AMM}.

In spite of all these efforts, several Blaschke-Santal\'o diagrams (or
complete systems of inequalities) remain unknown.
To the best of our knowledge, this is the case for the diagrams
$(A,P,D)$, $(A,D,r)$, $(A,r,w)$, $(A,R,w)$, $(P,D,r),$
$(P,r,w)$ and $(P,R,w)$. Let us mention that several
 interesting inequalities for $(P,D,r)$ and $(P,R,w)$ can be found in \cite{Hernandez01}. {Let us also mention several works dedicated to Blaschke-Santal\'o diagrams involving four geometric quantities (see e.g. \cite{Brandenberg}).}

In this paper, we focus on the case $(A,D,r)$ and completely solve it in the two-dimensional case ($n=2$), and partially in the general case $n\geq 3$.
More precisely in the case $n=2$, we obtain universal inequalities involving the area of a plane convex set, its diameter and inradius, and we plot the corresponding Blaschke-Santal\'o diagram:
$$
\mathcal{D}=\left\{(x,y)\in \mathbb{R}^2, \ x=2\frac{r(K)}{D(K)}, \ y=\pi \frac{r^2(K)}{A(K)}, \ K\in \mathcal K_2\right\}.
$$
To this aim, we will introduce two families of optimization problems for the area
(or the volume in higher dimension) and then solve them. More precisely, we will tackle the issue of maximizing and minimizing the area
with prescribed diameter and inradius. It turns out that the minimization 
problem has already been solved in the two dimensional case by M. Hernandez Cifre and G. Salinas \cite{Hernandez01}. The optimal set is known to be a two-cap body defined as the convex hull of a disk of radius $r$ {and illustrated on Fig.~\ref{fig:2cap}}.
with two points that are symmetric with respect to the center of the ball 
and at a distance $D$. 
This result has been extended in three dimensions in \cite{YangZhang} but 
with an
additional assumption. In this paper, we solve this minimization problem in full
generality (see Theorem \ref{theo:SOmin}).

\begin{figure}[h!]
\begin{center}
\includegraphics[width=6cm]{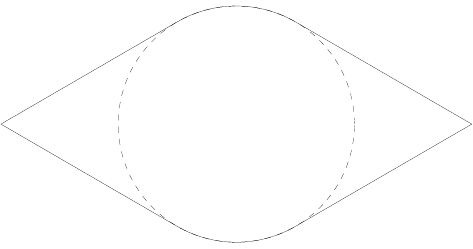}
\caption{{The two-cap body in 2D, minimizer of the area among convex bodies of prescribed inradius and diameter.} \label{fig:2cap}}
\end{center}
\end{figure}

Regarding the maximization problem, it is much harder and we are  only able to solve it in the two-dimensional case. At first glance, it seems intuitive that the optimal shape should be a {\it spherical slice} defined as the intersection of a disk of diameter $D$ with a strip of width $2r$, symmetric with respect to the center of the disk {(see Fig. \ref{Def2})}. Surprisingly, this is only true for "large" values of {$D/r$} (more precisely for $D\geq \alpha r$ with $\alpha \simeq 2.388$, see Theorem \ref{theo:SOmax}), {while for small values 
of $D/r$} the optimal set is some kind of nonagon made of 3 segments and 6 arcs of circle inscribed in an equilateral triangle {(see Fig.~\ref{figTD2})}. For the precise definition of this set, we refer to Definition \ref{defKE} hereafter. It is likely that this unexpected solution explains why this elementary {\it shape optimization}
problem remained unsolved up to now.

The article is organized as follows. Section \ref{sec:mainRes} is devoted 
to introducing the optimization problems we will deal with and stating the main results. In Section \ref{sectionBS}, the Blaschke-Santal\'o diagram
$\mathcal{D}$ for the triple $(A,D,r)$ is plotted. The whole sections \ref{sec:proofTheoMin} and \ref{sec:proofTheoMax} are respectively concerned 
with the proofs of Theorems~\ref{theo:SOmin} and  \ref{theo:SOmax}. Because of the variety and complexity of optimizers, the proofs appear really difficult and involve several tools of convex analysis, optimal control and geometry.  

\medskip

Let us end this section by gathering some notations used throughout this article:
\begin{itemize}
\item $\mathcal{H}^{n-1}$ is the $n-1$ dimensional Hausdorff measure.
\item if $K$ is a convex set of $\R^2$, we call respectively $A(K)$, $D(K)$ and $r(K)$ (or alternatively $A$, $D$ and $r$ if there is no ambiguity) the area, diameter and inradius of $K$.
\item in the more general $n$-dimensional case, we keep the same notations, except for the volume of $K$ which will be either denoted $V(K)$ or $|K|$.
\item $x\cdot y$ is the Euclidean inner product of two vectors $x$ and $y$ in $\R^n$.
\item $B(O,r)$ denotes the ball of center $O$ and radius $r$ while $S(O,r)$ is the sphere
(its boundary). 
\item The boundary of the biggest ball included into a convex set will be 
called
 {\it incircle} in dimension 2, {\it insphere} in higher dimension.
\end{itemize}
\subsection{Optimization problems and main results}\label{sec:mainRes}
Let us first make the notations precise.
Let $r>0$, $D>2r$ be given and let $\mathcal{K}^{n}_{r,D}$ be the set of convex bodies of $\R^{n}$ having as inradius $r$ and as diameter $D$, namely 
$$
\mathcal{K}^{n}_{r,D}=\{K\in \mathcal K_n \mid r(K)=r\text{ and } D(K)=D\}.
$$
We are interested in the following maximization problem
\begin{equation}\label{SOP}\tag{$\mathcal{P}_{\rm max}$}
\boxed{\sup_{K\in \mathcal{K}^{2}_{r,D}}\vert K \vert}
\end{equation}
and minimization problem
\begin{equation}\label{SOP2}\tag{$\mathcal{P}_{\rm min}$}
\boxed{\inf_{K\in \mathcal{K}^{n}_{r,D}}\vert K \vert.}
\end{equation}

Note that the condition $D>2r$ guarantees that the set $\mathcal{K}^n_{r,D}$ is non-empty. If $D=2r$, problems are obvious since only the ball belongs to the set of constraints $\mathcal{K}^{n}_{r,D}$. 

Let us first observe, since we are working with convex sets, that existence of solutions for Problems \eqref{SOP} and \eqref{SOP2} is almost straightforward.
\begin{proposition}\label{prop:exist}
Let $(r,D)$ be two given parameters such that $D> 2r$.
Problems \eqref{SOP} and \eqref{SOP2} have a solution.
\end{proposition}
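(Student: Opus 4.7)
The plan is a textbook application of the direct method of the calculus of variations, combined with Blaschke's selection theorem. First, since $D>2r$ the class $\mathcal{K}^{n}_{r,D}$ is nonempty (it contains for instance the two-cap bodies mentioned in the introduction), and the isodiametric inequality gives $|K|\leqslant |B(O,D/2)|$ for every $K\in \mathcal{K}^{n}_{r,D}$, so both the supremum in \eqref{SOP} and the infimum in \eqref{SOP2} are finite.

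Next I would pick a maximizing (respectively minimizing) sequence $(K_{k})_{k\in\N}\subset \mathcal{K}^{n}_{r,D}$. Since $D(K_{k})=D$, each $K_{k}$ is contained in some closed ball of radius $D$; after translating each $K_{k}$ if necessary, I may assume $K_{k}\subset \overline{B(O,D)}$ for all $k$. Blaschke's selection theorem then provides a subsequence, still denoted $(K_{k})$, that converges in the Hausdorff distance to some compact convex set $K^{\ast}\subset \overline{B(O,D)}$.

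It remains to verify that $K^{\ast}\in \mathcal{K}^{n}_{r,D}$ and that $|K^{\ast}|$ realises the sup (resp.\ inf). Both the diameter and the inradius are continuous functionals on the space of compact convex sets endowed with the Hausdorff distance, so $D(K^{\ast})=D$ and $r(K^{\ast})=r>0$; in particular $K^{\ast}$ has non-empty interior, hence belongs to $\mathcal{K}_{n}$, and thus lies in $\mathcal{K}^{n}_{r,D}$. Finally, on the class of convex bodies uniformly contained in $\overline{B(O,D)}$ with inradius bounded below by $r>0$, the Lebesgue measure is continuous with respect to Hausdorff convergence, which yields $|K^{\ast}|=\lim_{k\to\infty}|K_{k}|$; by construction this common value is the extremum we wanted, so $K^{\ast}$ is a solution.

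There is no genuine obstacle in this plan; the only point one should be a little careful about is the Hausdorff-continuity of the inradius, which is what guarantees that the limiting body $K^{\ast}$ does not collapse to a lower-dimensional convex set, even though all terms of the sequence are full-dimensional with inradius exactly $r$. All the ingredients (Blaschke selection, continuity of $D$, $r$ and $|\cdot|$ on convex bodies) are classical, so the proof should be only a few lines.
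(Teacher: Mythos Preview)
Your proposal is correct and follows essentially the same route as the paper: take an extremizing sequence, translate it into a fixed bounded set, extract a Hausdorff-convergent subsequence by Blaschke selection, and use the continuity of volume, diameter and inradius on convex bodies to pass to the limit. The only cosmetic difference is that the paper spells out the inradius stability argument explicitly rather than citing it as a known continuity result.
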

\begin{proof}
{Without loss
of generality, by using an easy rescaling argument, one can deal with sets of constraints with unitary inradius, in other words $r=1$ and with diameter $D> 2$.}

Let us deal with the minimization problem \eqref{SOP2}, the case of the maximization problem \eqref{SOP} being exactly similar. Let us consider a minimizing sequence $(K_{m})_{m\in \N}$. Since we are working with sets of diameter $D$, up to applying a well-chosen translation to each element of the sequence, on can assume that every convex set $K_{m}$ is included in a (compact) box $B$ of $\R^n$. Since the set of convex sets included in a given box is known to be compact for the Hausdorff distance \cite{HenrPierre}, there exists a subsequence (still denoted $(K_{m})_{m\in \N}$) converging to a convex set $K$. {To conclude, we will prove that the 
objective function (the area) is continuous with respect to the Hausdorff 
distance and that the diameter and inradius constraints are stable for the Hausdorff convergence, in other words that $K$ belongs to the admissible set $\mathcal{K}^{2}_{r,D}$. Recall that the volume and diameter functionals are not continuous in general for the Hausdorff distance. Nevertheless, when dealing with convex sets, the continuity property becomes true (see \cite{HenrPierre,Schneider}). }

{It remains to show that the inradius constraint is also continuous for the Hausdorff distance. Let $(K_{m})_{m\in \N}$ be a sequence of convex bodies converging to $K$ for the Hausdorff distance. Let us introduce $r_m=r(K_m)$, $r=r(K)$ and $x_m\in K_m$, such that $B(x_m,r_m)\subset 
K_m$. Since $(r_m)$ (resp. $(x_m)$) is bounded, there exists subsequences 
still denoted $r_m$  and $x_m$ with a slight abuse of notation, that converges respectively towards $\tilde{r}\geq 0$ and $\tilde x\in \R^{n}$. By 
stability of the Hausdorff convergence for the inclusion (see e.g. \cite[Chapter~2 and Prop.~2.2.17]{HenrPierre}), we have $B(\tilde x, \tilde r)\subset K$. Therefore, one has $\tilde r\leq  r$.  Assume by contradiction 
that $r>\tilde r$. Hence, there exists  $x\in K$ and $\alpha>0$ such that 
$B(x,\tilde r+\alpha)\subset K$. 
Let us consider the closed disk $\hat B=B(x,(\tilde r+\alpha)/2)$.
By stability of the Hausdorff convergence, one has $B\subset K_{m}$ whenever $m$ is large enough, which implies that $r(K_m)\geq (1+\alpha)/2$, yielding to a contradiction. The expected continuity property follows.
} 

\end{proof}

As underlined in the Introduction, Problem \eqref{SOP2} has already been solved in the two-dimensional case in \cite{Hernandez01}. In what follows, we will generalize it to the general case $\R^{n}$, by  proving that the two-cap body is the only solution in any dimension.
\begin{theorem}\label{theo:SOmin}
{The (unique) optimal shape for Problem \eqref{SOP2} is the convex hull of a ball of radius $r$ and two points apart of distance $D$ and whose middle is the center of the ball.}
In other words, any convex set  in $\R^n$ with volume $V$, diameter $D$ and inradius $r$ satisfies:
\begin{equation}\label{completemin}
V\geq 2\omega_{n-1} r^n \int_{\arccos(2r/D)}^{\pi/2} \sin^nt dt+
\frac{\omega_{n-1} r^{n-1}}{nD^n}\left(D^2-4r^2\right)^{(n+1)/2}
\end{equation}
where $\omega_{n-1}$ is the volume of the unit ball in dimension $n-1$.
In particular, any convex set  in $\R^2$ with area $A$, diameter $D$ and inradius $r$ satisfies:
\begin{equation}\label{completemin2D}
A\geq r\sqrt{D^2-4r^2}+r^2\left(\pi-2\arccos\left(\frac{2r}{D}\right)\right).
\end{equation}
\end{theorem}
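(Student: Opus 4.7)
The plan is to carry out three reductions, each preserving the constraints $r(K)=r$ and $D(K)=D$ while weakly decreasing the volume. First, an arbitrary feasible body is replaced by a two-cap via a convex-hull inclusion. Second, the ball center is moved onto the diameter chord by Schwarz symmetrization. Third, the resulting one-parameter family is optimized by differentiating an explicit volume formula. Strictness of each reduction delivers the uniqueness statement.

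\emph{Steps 1 and 2.} Given $K\in\mathcal{K}^n_{r,D}$ with incircle $B(O,r)\subset K$ and diameter-realizing pair $P,Q\in K$, set $K':=\mathrm{conv}(B(O,r)\cup\{P,Q\})\subset K$. The double inclusions $B(O,r)\subset K'\subset K$ and $\{P,Q\}\subset K'\subset K$ yield $r(K')=r$ and $D(K')=D$, so $K'$ is feasible with $|K'|\leq|K|$, and $K=K'$ whenever $K$ is a minimizer. Next, apply the Schwarz symmetrization $K'\mapsto K'^*$ around the line $\ell$ through $P$ and $Q$: replace every section $K'\cap H_x$ (with $H_x$ orthogonal to $\ell$) by the $(n-1)$-ball of equal Hausdorff measure centered on $\ell$. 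By Brunn's concavity principle, $K'^*$ is convex and $|K'^*|=|K'|$. A direct slicing shows that the Schwarz symmetrization of $B(O,r)$ is itself a ball $B^*$ of radius $r$, centered at the orthogonal projection of $O$ onto $\ell$. Monotonicity of the symmetrization and convexity of $K'^*$ then give $K'^*\supset\tilde K:=\mathrm{conv}(B^*\cup\{P,Q\})$, hence $|K'|\geq|\tilde K|$. This inequality is strict unless $K'$ is already rotationally symmetric around $\ell$, i.e.\ unless $O\in\ell$.

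\emph{Step 3.} Place $P=(-D/2,0,\dots,0)$, $Q=(D/2,0,\dots,0)$, $O=(x_O,0,\dots,0)$ on $\ell$, and set $d_P=D/2+x_O$, $d_Q=D/2-x_O$, $\alpha_P=\arccos(r/d_P)$, $\alpha_Q=\arccos(r/d_Q)$. Slicing $\tilde K$ orthogonally to $\ell$ decomposes it into two tangent cones (with apices $P,Q$) glued along a spherical band of $B^*$, producing
\[
V(d_P,d_Q)=\omega_{n-1}\,r^n\!\int_{\alpha_Q}^{\pi-\alpha_P}\!\sin^n\psi\,d\psi+\frac{\omega_{n-1}\,r^{n-1}}{n}\!\left(\frac{(d_P^2-r^2)^{(n+1)/2}}{d_P^n}+\frac{(d_Q^2-r^2)^{(n+1)/2}}{d_Q^n}\right).
\]
A direct differentiation, in which the integral's boundary-variation term cancels against part of the cone contribution, collapses to the remarkable identity $\partial_{d_P}V=(\omega_{n-1}r^{n-1}/n)\sin^{n-1}\alpha_P$. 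Combined with $d_P+d_Q=D$, this yields $dV/dx_O=(\omega_{n-1}r^{n-1}/n)(\sin^{n-1}\alpha_P-\sin^{n-1}\alpha_Q)$; since $d\mapsto\sqrt{d^2-r^2}/d$ is strictly increasing on $(r,\infty)$, this derivative vanishes only at $d_P=d_Q=D/2$ and has the sign of $x_O$, so $V$ is strictly minimized at $x_O=0$. Evaluating at $d_P=d_Q=D/2$ recovers the closed form \eqref{completemin}.

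The main obstacle is Step 2: one must justify carefully that Schwarz symmetrization sends convex bodies to convex bodies (via Brunn's principle), that the off-axis ball symmetrizes to an on-axis ball of the same radius (a short slicing computation), and that $\tilde K\subsetneq K'^*$ strictly whenever $O\notin\ell$, which is what delivers the uniqueness statement. Once these technical points are settled, the unusually clean form of $\partial_{d_P}V$ in Step 3 makes the final one-variable optimization go through uniformly in every dimension $n\geq 2$.
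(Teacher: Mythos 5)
Your route genuinely differs from the paper's in its second and third reductions, and the parts you carry out in full are correct. The paper also begins by replacing $K$ with $\mathrm{conv}(B(O,r)\cup\{A,B\})$, but it then handles both recentering steps geometrically: a Steiner symmetrization (together with a delicate three-zone sectional comparison) to put $O$ on the diameter line, and a second Steiner symmetrization across the perpendicular bisector of $[A,B]$ to center it, explicitly declining to do the computation you perform in Step~3. Your Step~3 is precisely the ``explicit computation'' the paper mentions and skips, and I checked it: the volume formula is right (a cone of height $(d^2-r^2)/d$ over a base of radius $r\sqrt{d^2-r^2}/d$, glued to the band $\omega_{n-1}r^n\int\sin^n$), the boundary term of the integral does cancel to give $\partial_{d_P}V=\tfrac{\omega_{n-1}r^{n-1}}{n}\sin^{n-1}\alpha_P$, and strict minimality at $x_O=0$ follows in every dimension. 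Combined with the non-strict Schwarz inequality $|\tilde K|\leq |K'^*|=|K'|$, this already proves \eqref{completemin} and \eqref{completemin2D}. Two feasibility points should still be recorded: you need $d_P,d_Q\geq r$ both for $D(\tilde K)=D$ and for $\alpha_P,\alpha_Q$ to be defined; this follows from $|PO|+r\leq D$, $|QO|+r\leq D$ and $d_P+d_Q=D$, but is nowhere stated.

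The genuine gap is exactly where you locate it, and it is larger than a ``technical point to be settled'': the strictness $|\tilde K|<|K'|$ whenever $O\notin\ell$, which is all that separates you from the uniqueness statement, does not follow from any standard equality-case fact about Schwarz symmetrization (the symmetrization always preserves volume; what you need is that the sections of $K'$ and of $\tilde K$ differ in measure on a set of abscissae of positive length). The natural attempt --- the section of $K'$ at abscissa $x$ contains $\mathrm{conv}\bigl((B(O,r)\cap H_x)\cup\{(x,0,\dots,0)\}\bigr)$, which beats the centered ball section of $\tilde K$ --- fails precisely when the inball meets the chord $[P,Q]$ (distance from $O$ to $\ell$ at most $r$): there the axis point lies inside the off-center section ball over the whole central range, and those sections have equal measure. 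One must then extract the strict gain from the cone region, where the tangent cone from $Q$ to the off-axis ball is tilted relative to $\ell$; this is the content of the paper's Lemma~\ref{lem:train1848}, whose second case splits $[x_O,D]$ into three zones and finds the strict inequality only in the middle one. Until you supply an argument of this kind, your proof establishes the inequalities of the theorem but not the uniqueness of the minimizer.
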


Let us turn to the maximization Problem \eqref{SOP}. Let us introduce particular convex sets of $\mathcal{K}^{n}_{r,D}$ that will be shown to be natural candidates to solve the maximization problem.

\begin{definition}[The symmetric spherical slice $K_{S}(D)$]\label{def1}

Let $D> 2$. We call symmetric spherical slice and denote by $K_{S}(D)$ the convex set defined as the intersection of the disc $D(O,D/2)$ with a strip of width $2$ centered at $O$ (see Fig. \ref{Def2}).
We have 
$$
\vert K_{S}(D)\vert=  \sqrt{D^{2}-4}+ \frac{D^{2}}{2}\arcsin\left(\frac{2}{D}\right).
$$
\end{definition}
\begin{figure}[h!]
\begin{center}
\includegraphics[width=6cm]{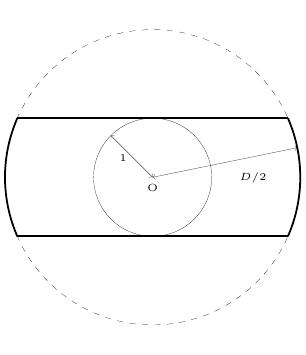}
\caption{The symmetric slice $K_{S}(D)$ and its (non unique) incircle.\label{Def2}}
\end{center}
\end{figure}

\begin{definition}[The smoothed regular nonagon $K_{E}(D)$]\label{defKE}
Let $D\in]2,2\sqrt{3}[$. We denote by $K_{E}(D)$ the convex set enclosed in an equilateral triangle $\Delta_E$ of inradius $1$ and made of segments and arcs of circle of diameter $D$ in
the following way (see Fig.~\ref{figTD2}): 
let $\eta_{i}$ be the normal angles to the sides of $\Delta_E$ (where one 
sets for example $\eta_{1}=-\pi/2$). Let us introduce
$$
\tau=(3+\sqrt{D^{2}-3})/2 \quad\text{and}\quad h=\sqrt{D^{2}-\tau^{2}}
$$
and the points $A_i$, $B_i$ and $M_i$, $i=1,2,3$ defined through their coordinates by
$$A_{i}=
\begin{pmatrix}
\cos\eta_i+h\sin\eta_i\\
\sin\eta_i-h\cos\eta_i
\end{pmatrix},
\quad B_{i}=
\begin{pmatrix}
\cos\eta_i-h\sin\eta_i\\
\sin\eta_i+h\cos\eta_i
\end{pmatrix}, \quad
M_{i}=(1-\tau)\times
\begin{pmatrix}
\cos\eta_i\\
\sin\eta_i
\end{pmatrix},\  i=1,2,3.
$$

The set $K_E(D)$ is then obtained as follows:
\begin{itemize}
\item the points $A_1$, $B_1$, $M_3$, $A_2$, $B_2$, $M_1$, $A_3$, $B_3$, $M_2$, $A_1 $ belong to its boundary;  
\item $\arc{B_{1}M_{3}}$ and $\arc{M_{1}A_{3}}$  are diametrally opposed arcs of 
the same circle of diameter $D$, and similarly for the two other pairs of 
arcs of circle $\arc{B_{2}M_{1}}$ and $\arc{M_{2}A_{1}}$, $\arc{M_2B_{3}}$ and $\arc{M_{3}A_{2}}$. 
\item the boundary contains the segment $[A_{i}B_{i}]$, $i=1,2,3$. Note 
that the contact point $I_{i}$ with the incircle is precisely the middle of $[A_{i}B_{i}]$, 
\end{itemize}
Moreover, setting
$$
t_{1}=\arccos\left(\frac{\sqrt{3}}{D}\right)=\arcsin\left(\frac{2\tau-3}{D}\right),\quad
t_{2}=\arccos\left(\frac{\sqrt{3}(\tau-2)}{D}\right)=\arcsin\left(\frac{\tau}{D}\right),
$$
one has 
\begin{equation}\label{formulaKE}
\vert K_{E}(D)\vert = \frac{3}{4} D^{2}(t_2-t_1)+\frac{3\sqrt{3}}{2}(\sqrt{D^{2}-3}-1)=
 \frac{3}{2} D^{2}\left(\frac{\pi}{3}-t_1\right)+\frac{3\sqrt{3}}{2}\left(\sqrt{D^{2}-3}-1\right).
\end{equation}

\begin{figure}[H]
\begin{center}
\includegraphics[width=8cm]{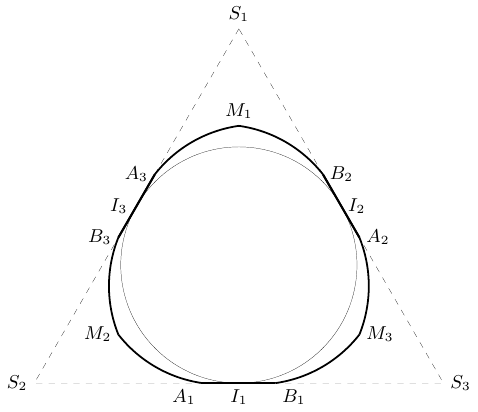}
\caption{The set $K_{E}(D)$ and its incircle\label{figTD2}}
\end{center}
\end{figure}

\end{definition}

In a nutshell, we will prove that {for $r=1$} the set  $K_{E}(D)$ is optimal for small values of $D$ whereas the solution is the symmetric slice for bigger values of $D$. {In what follows, the notation $rK$ with $r>0$ and $K\in  \mathcal{K}^{2}_{r,D}$ denotes the range of $K$ by the homothety  centered at the origin of the considered orthonormal basis, with scale factor $r$.}

\begin{theorem}\label{theo:SOmax}
{Let $r>0$ There exists $D^\star\simeq 2.3888$ such that if $D< rD^\star $, the (unique) solution of Problem~\eqref{SOP} is $rK_{E}(D/r)$, and for $D> rD^\star $ the unique solution is $rK_{S}(D/r)$.
For $D=D^\star r$ the two solutions coexist.}

In other words, for every plane convex set with area $A$, diameter $D$ and inradius $r$, one has
\begin{equation}\label{completemax}
A\leq \psi (D,r)\quad \text{where}\quad 
\psi(D,r)=\left\{\begin{array}{lc}
\frac{3\sqrt{3}r}{2}\left(\sqrt{D^2-3r^2} - r\right) + \frac{3 D^2}{2}\left(\frac{\pi}{3}-
\arccos\left(\frac{\sqrt{3} r}{D}\right)\right) & \mbox{if } D\leq r D^\star\\
r\sqrt{D^2-4} + \frac{D^2}{2} \arcsin\left(\frac{2r}{D}\right) & \mbox{if 
} D\geq r 
D^\star.

\end{array}\right.
\end{equation}

{More precisely $D^\star$ is the unique number in $[2,2\sqrt{3}]$ for which both expressions of $\psi(D,r)$ above are equal.}
\end{theorem}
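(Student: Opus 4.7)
The plan is to reduce, via a rigidity analysis of $\partial K^\star$, to a finite-dimensional comparison between the two candidate families $K_S(D)$ and $K_E(D)$. By the scaling invariance of the ratios involved we assume $r=1$ throughout; a maximiser $K^\star$ of~\eqref{SOP} exists by Proposition~\ref{prop:exist}.

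The first step is to establish a structure theorem for $\partial K^\star$: the boundary is a finite concatenation of (i) closed straight segments, each tangent to the incircle $B(O,1)$ at a single point, and (ii) smooth arcs along which the diameter constraint is pointwise saturated, these being pieces of circles of radius $D/2$ grouped in pairs of diametrically opposite arcs on a common radius-$D/2$ circle (so that the antipode through the centre of that circle of any point on one arc is a boundary point of $K^\star$ at distance exactly $D$). The argument rests on standard local perturbation arguments: on any open portion of $\partial K^\star$ where neither the inradius nor the diameter constraint is active, one can push the boundary slightly outward in a convex, $C^0$-small way without violating either constraint, contradicting maximality. Some care is needed to rule out pathological configurations (clusters of corners, accumulation points of arcs, etc.).

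The heart of the proof is then a case analysis driven by the number $k$ of tangent segments of $\partial K^\star$. Elementary convex geometry forces $k\geq 2$. If $k=2$, the two segments must be antipodal supports of the incircle (otherwise the incircle could be translated along a suitable direction and $r(K^\star)=1$ would fail to be maximal), so $K^\star$ lies between two parallel tangent lines to $B(O,1)$, and the two remaining boundary pieces are necessarily the diametrically opposed halves of a single circle of radius $D/2$; a direct identification gives $K^\star=K_S(D)$. If $k\geq 3$, one writes the area as an explicit smooth function of the $k$ angular positions of the contact points on $\partial B(O,1)$, the lengths of the tangent segments, and the geometric parameters of the connecting arc pairs; the first-order optimality conditions, combined with a symmetrisation argument exploiting the rigidity of the incircle in this regime, should force $k=3$, the three contact points equidistributed on the incircle, and the three circumscribing tangent lines realising an equilateral triangle $\Delta_E$ of inradius~$1$. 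The six remaining boundary arcs are then uniquely determined to be those of Definition~\ref{defKE}, yielding $K^\star=K_E(D)$. I expect this classification to be the main obstacle: the combinatorics of how diameter-realising arc pairs can be distributed along $\partial K^\star$ a priori produces many candidate configurations (circumscribed quadrilaterals, asymmetric circumscribed triangles, nested arc pairings, etc.), and ruling them all out requires a delicate combination of Lagrange conditions and geometric arguments of P\'al type (a convex planar set of diameter $D$ fits inside an equilateral triangle of height $D\sqrt 3/2$).

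The final step is an explicit comparison. Using the formulas of Definitions~\ref{def1} and~\ref{defKE}, the function $\varphi:D\mapsto |K_E(D)|-|K_S(D)|$ is smooth on $(2,2\sqrt 3)$ with boundary values $\varphi(2^+)=0$ (both candidates degenerate to the unit disc, $|K_E(2)|=|K_S(2)|=\pi$) and $\varphi(2\sqrt 3)=3\sqrt 3-|K_S(2\sqrt 3)|<0$ (where $K_E$ degenerates to the equilateral triangle of inradius $1$). A short analysis of the second derivatives at $D=2^+$ shows that $\varphi$ is strictly positive just above $D=2$, so by the intermediate value theorem $\varphi$ has at least one zero in $(2,2\sqrt 3)$; an elementary sign study of $\varphi'$ yields uniqueness of that zero, which is the value $D^\star\simeq 2.3888$. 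The dichotomy and the uniqueness statements of the theorem follow at once.
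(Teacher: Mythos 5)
Your overall architecture (a structure theorem for $\partial K^\star$, a case analysis on the contact configuration with the incircle, and an explicit comparison of the surviving candidates) is the same as the paper's, and both the strip case and the final comparison of $|K_E(D)|$ with $|K_S(D)|$ are sketched in essentially the right way. The genuine gap is in the step you yourself flag as the main obstacle, and it is worse than you anticipate: it is simply not true that the first-order optimality conditions in the three-contact-line case force the circumscribing triangle to be equilateral. The paper's analysis of the three-free-zone case splits into two subcases according to whether the antipodal points $M_i$ lie in the interior of the free arcs or coincide with an endpoint $A_j$ or $B_j$ of a contact segment; the second subcase produces a second critical configuration $K_C(D)$, a smoothed nonagon inscribed in an \emph{isosceles} triangle (Definition~\ref{defKC}), which satisfies every geometric and first-order condition your argument imposes (contact points at the midpoints of the segments, free boundary made of antipodal pairs of arcs of radius $D/2$, Lagrange conditions on the radius of curvature). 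No symmetrisation or multiplier argument eliminates it: the paper must compute $|K_C(D)|$ explicitly and prove $|K_C(D)|<|K_S(D)|$ for all $D\in(2,2\sqrt3\,]$ by a change of variables $D\mapsto\tau$ (with $\tau$ the root of a cubic) and a term-by-term comparison of derivatives. A proof that never produces this candidate concludes prematurely at the classification step.

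A second, smaller omission: within the triangle case one must also dispose of the configuration with only two free zones (two of the endpoints $A_i,B_i$ collapsing onto a vertex of the circumscribing triangle); the paper rules it out by showing that the resulting cubic $P(X)=X^{3}-\frac{D^{2}-1}{4}X^{2}-\frac12 X+\frac14$ has no root in $[1/2,1]$. You acknowledge that ``many candidate configurations'' must be excluded but offer no mechanism beyond optimality conditions which, as noted above, are insufficient on their own. Finally, your comparison function $\varphi=|K_E|-|K_S|$ should be supplemented by the observation that for $D\geq 2\sqrt3$ the triangle-type candidates cannot exist, so $K_S$ wins by default there; the uniqueness of the zero $D^\star$ then follows from the sign analysis of $g(D)=\pi-3\arccos(\sqrt3/D)-\arcsin(2/D)$, which is increasing then decreasing on $(2,2\sqrt3)$ --- your ``elementary sign study of $\varphi'$'' is exactly this and is fine in outline.
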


\subsection{The Blaschke-Santal\'o Diagram for $(A,D,r)$}\label{sectionBS}
Usually, Blaschke-Santal\'o diagrams are normalized to fit into the unit square $[0,1]\times [0,1]$.
Thus, starting from the straightforward inequalities $D\geq 2r$ and $A\geq \pi r^2$ (where $A$, $D$ and $r$ denote respectively the area, diameter 
and inradius of any two-dimensional convex set), drives us to choose the
system of coordinates $x=2r/D$ and $y=\pi r^2/A$. We then define the Blaschke-Santal\'o diagram $\mathcal{D}$
as the set of points
$$
\mathcal{D}=\left\{(x,y)\in \mathbb{R}^2, x=2\frac{r(K)}{D(K)}, \ y=\pi \frac{r^2(K)}{A(K)}, \
K\in \mathcal K_2\right\}.
$$
The point $(1,1)$ corresponds to the disk, while the point $(0,0)$ corresponds to an infinite strip.
The solution of the minimization problem \eqref{SOP2}
provided in Theorem~\ref{theo:SOmin} leads to the upper curve of $\mathcal{D}$.
Using \eqref{completemin2D}, we claim that the upper curve is the graph of $y^+$, defined by
$$y^+(x)=\frac{\pi x}{x(\pi-2\arccos x)+2\sqrt{1-x^2}}, \quad x\in [0,1].$$
According to Theorem~\ref{theo:SOmax}, the lower curve is the graph of $y^-$, piecewisely defined by
$$
y^-(x)=\left\lbrace 
\begin{array}{lc}
\displaystyle
\frac{\pi x}{2\sqrt{1-x^2}+2\frac{\arcsin x}{x}} & \mbox{if } x\leq 2/D^\star\\
\displaystyle
\frac{\pi x^2}{2\pi -6\arccos(\frac{\sqrt{3} x}{2})+\frac{3\sqrt{3} x}{2}\left(
\sqrt{4-3x^2} -x\right)} & \mbox{if } x\geq 2/D^\star.\\
\end{array}\right.
$$
Were already known the inequalities
\begin{itemize}
\item $4A\leq \pi D^2$ (see \cite{YBol}) which corresponds to the inequality $y\geq x^2$ on the diagram,
\item $A\leq 2rD$ (see \cite{HeTsi}) which is equivalent to $y\geq \frac{\pi x}{4}$ on the diagram.
\end{itemize}
These two inequalities are shown with a dotted line on the diagram hereafter.

\medskip

To plot the Blaschke-Santal\'o diagram, it remains to prove that the whole zone between the two graphs $\{(x,y^-(x)), \ x\in [0,1]\}$ and 
$\{(x,y^+(x)), \ x\in [0,1]\}$ is filled, meaning that each point between 
these two graphs corresponds to at least one plane convex domain.

Let us start with the part of the diagram on the left of $x\leq x^\star:=2/D^\star$.
For a given diameter $D$ and inradius $r$, let $K^-$ denote the convex set with minimal area
(the two-cap body) and $K^+$ the convex set with maximal area (the symmetric slice).
We have $K^-\subset K^+$ and for any $t\in [0,1]$ the convex set $K_t:$ constructed according to the Minkowski sum $K_t=t K^+ + (1-t)K^-$ with $t\in [0,1]$, is known to
satisfy $K^-\subset K_t\subset K^+$. Therefore, all the sets $K_t$ share the same diameter $D$, the same inradius $r$ and their area is increasing 
from $A(K^-)$ to $A(K^+)$. This way,
it follows that the whole vertical  joining $(2r/D,y^-(2r/d))$ to $(2r/D,y^+(2r/d))$ is included in 
$\mathcal{D}$ as soon as $2r/D\leq 2/D^\star$.

Let us consider the remaining case $x\geq x^\star:=2/D^\star$. Starting 
from the optimal domain
$K^+$ which maximizes the area with given $D$ and $r$ (recall that $K^+$ is the convex set inscribed
in the equilateral triangle introduced in Definition~\ref{defKE}), we fix 
one of its diameter, say $[A,B]$
and we shrink continuously $K^+$ to the set $K_{AB}$ defined as the convex hull of the
points $A,B$ and the disk of radius $r$ contained in $K^+$. {Secondly}, 
we move the points $A,B$ continuously to the points $A',B'$ at distance $D$, oppositely located
with respect to the center of the disk (in the sense that the center is the middle of
$A',B'$) by keeping the convex hull with the disk at each step. The final 
step
is therefore the two-cap body $K^-$ and we have constructed a continuous path between 
$K^+$ and $K^-$ keeping the diameter and the inradius fixed: it follows that the whole  
 joining $(2r/D,y^-(2r/d))$ to $(2r/D,y^+(2r/d))$ for $2r/D\geq 2/D^\star$ is included in 
$\mathcal{D}$. At the end, $\mathcal{D}$ has only one connected component.

The complete Blaschke-Santal\'o diagram is plotted on Fig.~\ref{fig:BS} below.

\begin{figure}[H]
\begin{center}
\includegraphics[width=10cm,]{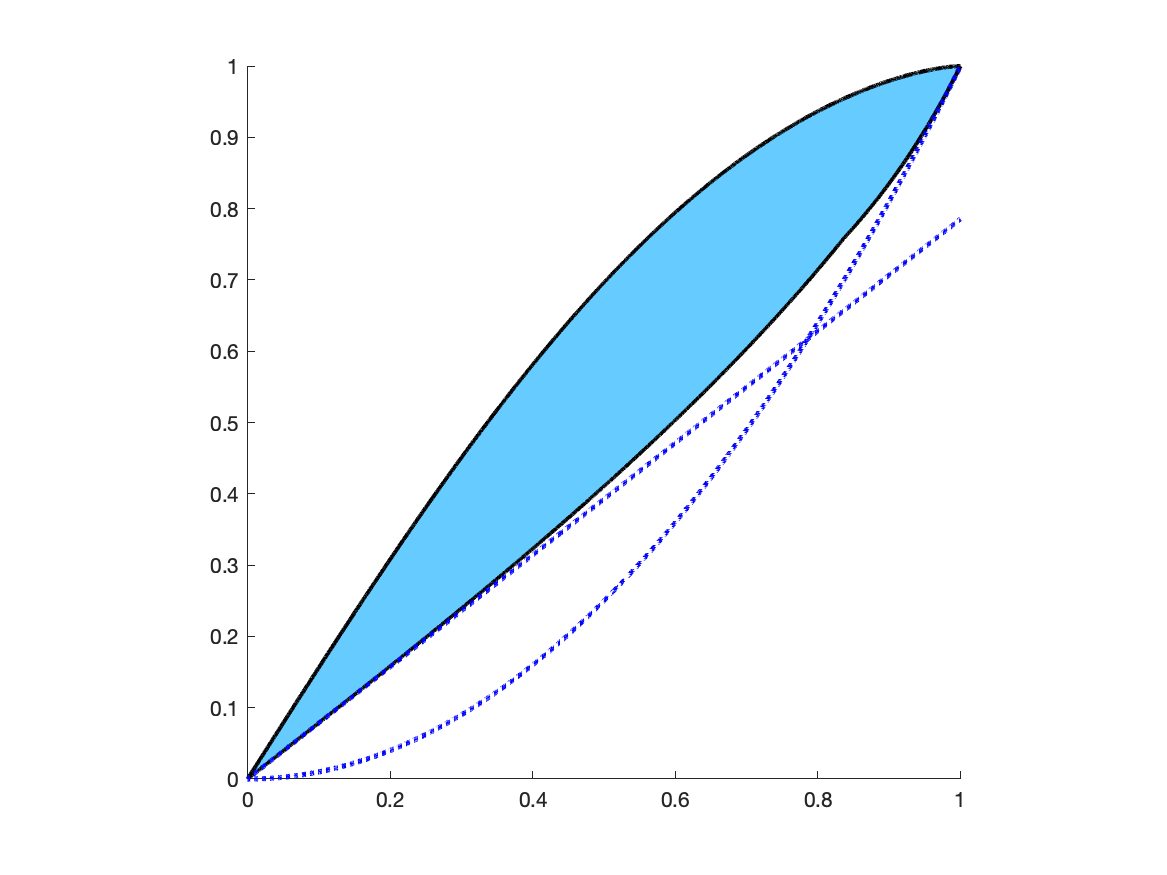}
\caption{The Blaschke-Santal\'o diagram $\mathcal D$ for $(A,D,r)$ (colored picture). The dotted lines represents the known inequalities $4A\leq \pi D^2$ and $A\leq 2rD$. \label{fig:BS}}
\end{center}
\end{figure}

\begin{remark}
It is notable that the two-cap body has been showed to solve a shape optimization problem motivated by the understanding of branchiopods eggs geometry in biology, and involving packings (see \cite{MR3941924}).
\end{remark}


\section{Proof of Theorem \ref{theo:SOmin}}\label{sec:proofTheoMin}

Let us first introduce several notations. For a generic  convex set $K$, we will denote by $A$ and $B$ the points of $K$ realizing the diameter, and respectively by $O$ and $r$ the center and radius of an insphere (the boundary of the biggest ball included in $K$). Introduce  $\mathcal{B}=(e_{1},...,e_{n})$ an orthonormal basis such that $e_{n}=\overrightarrow{AB}/AB$, so that the coordinates of $A$ and $B$ in $\mathcal B$ are
$$
A=(0,0,...,0)\quad \text{ and }\quad B=(0,0,...,0,D).
$$
More generally, we will denote by $(x_1,\dots,x_n)$ the coordinates of a generic vector $X$ in $\mathcal{B}$.

First, in order to relax the conditions $D(K)= D$ and $r(K)=r$ in Problem \eqref{SOP2}, we show that it is equivalent to deal with the conditions $r(K)\geq r$ and $D(K)\geq D$, which are always saturated at the optimum.
 

 \begin{lemma}\label{train0818}
 Let $r>0$ and $D>2r$. Let us consider the minimization problem
\begin{equation}\label{SOP2bis}\tag{$\widehat{\mathcal{P}}_{\rm min}$}
\inf_{K\in \widehat{\mathcal{K}}^{n}_{r,D}}\vert K \vert.
\end{equation}
 where
 $
 \widehat{\mathcal{K}}^{n}_{r,D}=\{K\in \mathcal K_n \mid r(K)\geq r\text{ and } D(K)\geq D\}.
 $
 Then, Problem~\eqref{SOP2bis} has at least a solution $K^\star$ and moreover, one has $D(K^{\star})=D$ and $r(K^\star)=r$.
 \end{lemma}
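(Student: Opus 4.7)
I split the argument into existence, then saturation of the two constraints.

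\emph{Existence.} I would argue exactly as in the proof of Proposition~\ref{prop:exist}, with one extra ingredient: the relaxed constraint $D(K)\geq D$ does not by itself provide a diameter bound on competitors, but such a bound follows from the fact that any minimizing sequence $(K_m)$ has uniformly bounded volume (above by any feasible value, which exists since $D>2r$). Indeed, each $K_m$ contains a ball of radius at least $r$, and the convex hull of this ball with any point of $K_m$ at distance $L$ from its centre has volume at least $c_n r^{n-1}L$ for a dimensional constant $c_n>0$; hence $D(K_m)$ is uniformly bounded. Up to translations, the $K_m$'s therefore lie in a common compact box, and Blaschke selection produces a Hausdorff limit $K^\star$. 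Continuity of volume, diameter and inradius on $\mathcal K_n$ for the Hausdorff topology then gives $K^\star\in\widehat{\mathcal K}_{r,D}^n$ with $|K^\star|=\inf$.

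\emph{Saturation of the constraints.} I would distinguish three cases at any optimum $K^\star$. First, if $r(K^\star)>r$ and $D(K^\star)>D$, the homothety of ratio $\lambda=\max\{r/r(K^\star),\,D/D(K^\star)\}<1$ produces a strictly smaller feasible competitor $\lambda K^\star$, contradicting optimality. Second, if $r(K^\star)=r$ and $D(K^\star)>D$, I would let $B$ be an insphere of $K^\star$ and set $K_t=(1-t)K^\star+tB$ for $t\in[0,1]$; then $K_t\subset K^\star$ while $B\subset K_t$ (by the formula for Minkowski sums of concentric balls), so $r(K_t)\geq r$, and the continuous map $t\mapsto D(K_t)$ runs from $D(K^\star)>D$ at $t=0$ to $2r<D$ at $t=1$, so some $t^\star\in(0,1)$ gives $D(K_{t^\star})=D$; since $K_{t^\star}\subsetneq K^\star$ and both are convex bodies, $|K_{t^\star}|<|K^\star|$, a contradiction. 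Third, if $r(K^\star)>r$ and $D(K^\star)=D$, I would pick a diameter pair $(A^\star,B^\star)$ of $K^\star$, choose coordinates so that $B^\star-A^\star$ is parallel to $e_1$, and apply the anisotropic contraction $T_\mu=\mathrm{diag}(1,\mu,\dots,\mu)$, $\mu\in(0,1]$, to $K^\star$. Because $T_\mu$ fixes the $e_1$-direction and is a contraction, $D(T_\mu K^\star)=D$ for every $\mu$; because $T_\mu K^\star$ is contained in a slab of width $O(\mu)$ perpendicular to $e_1$, the continuous function $\mu\mapsto r(T_\mu K^\star)$ drops from $r(K^\star)>r$ at $\mu=1$ to $0$ as $\mu\to 0^+$, so the intermediate value theorem yields $\mu^\star\in(0,1)$ with $r(T_{\mu^\star}K^\star)=r$, and then $|T_{\mu^\star}K^\star|=(\mu^\star)^{n-1}|K^\star|<|K^\star|$ closes the contradiction.

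The most delicate point is the third case, where one must verify both that the inradius varies continuously under the anisotropic contraction (which follows from the Hausdorff continuity of $\mu\mapsto T_\mu K^\star$ combined with the continuity of the inradius on convex bodies) and that the diameter remains exactly $D$ throughout, which relies precisely on having aligned a diameter pair of $K^\star$ with the contraction axis.
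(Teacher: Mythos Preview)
Your proof is correct and follows essentially the same strategy as the paper: bound the diameter of a minimizing sequence via a volume--inball estimate, then contradict non-saturation of each constraint, using in particular the anisotropic contraction $\operatorname{diag}(1,\mu,\dots,\mu)$ aligned with a diameter to reduce the inradius while preserving the diameter. The only differences are organizational---the paper handles the two constraints sequentially (first proving $r(K^\star)=r$ with the explicit choice $\mu=r/r(K^\star)$, then $D(K^\star)=D$ by replacing $K^\star$ with the convex hull of a shorter diameter subsegment and the inball) rather than your three-case split with the Minkowski interpolation $(1-t)K^\star+tB$ and the intermediate value theorem---but both routes are equally direct.
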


\begin{proof}
Existence of $K^\star$ follows by an immediate adaptation of the proof of 
Proposition~\ref{prop:exist} (if the diameter goes to $+\infty$ it is easy to prove that the
volume must blow up).

Regarding the second part of the statement, let us argue by contradiction,  assuming that $r(K^\star)>r$. We use the coordinate system associated to the basis $\mathcal{B}$ introduced above, constructed from a diameter $[AB]$ of $K^\star$.
Defining $\lambda=r/r(K^\star)<1$ and applying to $K^\star$ the linear transformation whose matrix in $\mathcal B$ is  $\operatorname{diag}(\lambda,...,\lambda,1)$, we obtain a new convex set $K'$ with diameter $D$ and inradius $r$. Moreover, its volume is $\lambda^{n-1}\vert K^\star \vert<\vert K^\star\vert$. this is in contradiction with the minimality of $K^\star$. 

Similarly, arguing still by contradiction, let us assume that $D(K^\star)>D$. Since $D>2r=2r(K^\star)$, there exist $A'$ and $B'$ in $[A,B]$ such that $A'B'=D$. Given $O$, the center of an insphere, we consider the set $K'$ defined as the convex hull of $A'$, $B'$ and $B(O,r)$. From this 
construction and by convexity, $K'$ is strictly included in $K$, $D(K')\geq D$ and $r(K')\geq r$. Therefore, one has $K'\in  \widehat{\mathcal{K}}^{n}_{r,D}$ and $\vert K'\vert<\vert K \vert$, which is in contradiction with the optimality of $K$. The conclusion follows. 
\end{proof}

It follows in particular from this result that the solutions of Problems~\eqref{SOP2} and \eqref{SOP2bis} coincide.

Furthermore, if $K$ is a general convex body in $\mathcal{K}^{n}_{r,D}$, by repeating the argument used to deal with the diameter constraint in the proof of Lemma~\ref{train0818}, one sees that the convex hull of $A$, $B$ and $B(O,r)$ also belongs to $\mathcal{K}^{n}_{r,D}$ and has a lower measure than the one of $K$.

Therefore, any minimizer $K^\star$ is necessarily the convex hull of two points $A$ and $B$ realizing its diameter, and $B(O,r)$, whose boundary is an insphere We note $K_O$ such a set. The next result proves a symmetry 
property of $K^\star$.

\begin{lemma}\label{lem:train1848}
{Let $D >2r>0$ and $A,B$ be two points at distance $D$
in $\R^{n}$. For any $O\in \R^{n}$, define the set $K_O := \operatorname{conv}(A, B, B(O, r))$. Then
$K_O\in \mathcal{K}^n$ and $\vert K_O\vert  \geq \vert K_{O'}\vert$ where 
$O'$ is the orthogonal projection of $O$
onto the line containing $A$ and $B$, with equality if and only if $O = 
O'$.}
\end{lemma}

\begin{proof}
Assume that that $O\neq O'$ we will prove that $\vert K_{O}\vert >\vert K_{O'}\vert $.
Two cases may happen.

\begin{enumerate}
\item
The ball $B(O,r)$ does not meet the diameter $[AB]$.
\item
The ball meets the diameter $[AB]$.
\end{enumerate}

In the first case let $a=OO'-r>0$, and assume that $e_1=\overrightarrow{{OO'}}/OO'$. 
Let us consider $S(K_O)$ the Steiner symmetrization of $K_O$ with respect 
to the hyperplane with normal vector $e_1$ and containing $A$ and $B$.
It is a well known result (see \cite{bonnesen}) that $S(K_O)$ is still convex with same area as $K_O$. Furthermore it contains $B(O',R),A$ and $B$. So it contains $K_{O'}$. Let us finally remark that $K_O\cap (OO')$ has length $2r+a$, and so $S(K_O)$ contains the point $C=(x_O,r+a/2,0,..,0)$ which is not in $K_{O'}$. By convexity we deduce that  $\vert K_{O'}\vert<\vert K_O\vert$.

\vspace{3mm}
 In the second case, we will distinguish three parts in $S(K_O)$, and for 
each part we will compare the volume of $K_O$ with the one of $K_{O'}$. {The main difficulty of what follows consists in proving that the area of the set $K_{O'}$ is strictly smaller than that of $K_O$, the corresponding large inequality being easily obtained with the properties of the symmetrization. }
 Consider the upper part $K_O^+$ of $K_O$, namely $K_O\cap \{X\in \R^n\mid X\cdot e_n\in [x_O,D]\}$.
Let $\Gamma_{B}$ be the set of points of $B(O,r)$ whose tangent hyperplane contains $B$, and $\Gamma'_{B}$ be the set of points of $S(O',r)$ whose 
tangent hyperplane contains $B$. By symmetry, all the points of $\Gamma'_{B}$ share the same last coordinate $x'$. Let $x_{1}$ and $x_{2}$ denote respectively the minimal and maximal first coordinate of points of $\Gamma_B$. Hence, one has $x_O+r>x_{2}>x_1>x_O$ and moreover, $x'\in (x_{1},x_{2})$ (see points $M,M_1,$ and $M_2$ in Fig. \ref{fig:lem2}).

Let us distinguish between three zones of $K_O^{+}$:
\begin{itemize}
\item \textit{On $K_O^+\cap \{X\in \R^n\mid X\cdot e_n\in [x_O,x_1]\}$.} It is easy to see that $B(O',r)\cap \{X\in \R^n\mid X\cdot e_n\in [x_{O},x_1]$ is exactly the image of 
$$
K_O^+\cap \{X\in \R^n\mid X\cdot e_n\in [x_O,x_1]\} =B(O',r)\cap \{X\in 
\R^n\mid X\cdot e_n \in [x_{O},x_1]\}.
$$ 
by the translation vector $\overrightarrow{O'O}$. These two sets have therefore the same measure.
\item \textit{On $K_O^+\cap \{x\in [x_1,x']\}$.} For $x\in \R$, let $H_{x}$ be the affine hyperplane whose equation in $\mathcal B$ is $\{X\in \R^n\mid X\cdot e_n=x\}$, and introduce  $K_{x}=K_O\cap H_x$. 
If $x\in [x_{O}-r,x_{O}+r]$, let $B_{x}$ be the $n-1$ dimensional ball $B(O',r)\cap H_{x}$. By construction, one has $\h(B_{x})<\h(K_{x})$ for all 
$x>x_{1}$ . As a consequence 
$$
|B(O,r)\cap \{X\in \R^n\mid X\cdot e_n\in [x_{1},x']\}|<|K \cap \{X\in \R^n\mid X\cdot e_n \in [x_{1},x']\}|.
$$
\item \textit{On $K_O^+\cap \{X\in \R^n\mid X\cdot e_n\in [x',D]\}$.} Define $C_{x'}$ as the cone with vertex $B$ and basis $B_{x'}=B(O',r)\cap H_{x'}$. Since $C_{x'}$ is the convex hull of $B_{x'}$ and $B$, it follows that $|C_{x'}|<|K^\star\cap \{X\in \R^n\mid X\cdot e_n\in [x',D]\}|$.
\end{itemize}
It follows that $\vert K_{O'} \cap \{x\in [x_{0},D]\} \vert< \vert K_O \cap \{x\in [x_{0},D]\} \vert$. Doing the same construction on the lower part of $K_O$ yields at the end that $\vert K_{O'}\vert<\vert K_O \vert$. The expected result follows.
\end{proof}

\begin{figure}[H]
\begin{center}
\includegraphics[width=4cm]{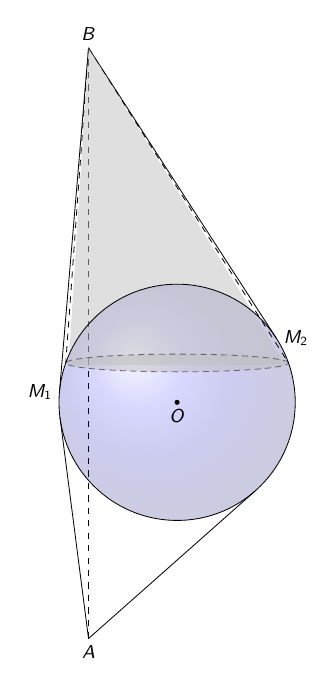}\hspace{2cm}
\includegraphics[width=3.7cm]{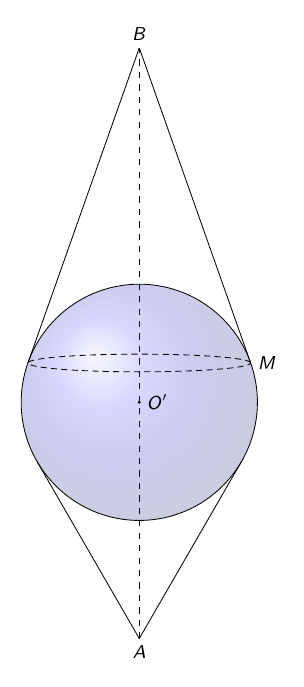}
\caption{Illustration of the proof of Lemma~\ref{lem:train1848}. The convex set on the right
has the same inradius and diameter as the one on the left but a lower volume.\label{fig:lem2}}
\end{center}
\end{figure}

To sum-up, we know that any minimizer $K^\star$ is of the type $K_O$, the 
convex hull of $A$, $B$ and $B(O,r)$, where $AB=D$ and $A$, $B$ and $O$ 
are collinear. it remains to show that the minimum is reached whenever $O$ is in the middle of the  $[AB]$. This can be done by an explicit
computation, but we propose a more geometrical proof based again on Steiner symmetrization.
 
 Let us argue by contradiction, considering $O\in [AB]\backslash \{ I\}$, 
where $I$ is the middle of $[AB]$ and assuming that $K^\star=K_O$. Let $\mathcal H$ be the hyperplane containing $I$ with normal vector $\overrightarrow{AB}$. Let $K'$ be the Steiner symmetrized of $K^\star$ with respect to $\mathcal H$. We claim that $K'\in \mathcal{K}^{n}_{r,D}$. Indeed, 
by monotonicity of the Steiner symmetrization with respect to the inclusion and since the range of $B(O,r)$ by the Steiner symmetrization is $B(I,r)$, one has necessarily $r(K')\geq r(K^\star)$. In the same way, observe 
that the strip $\mathcal S:=\{x\in \R^n\mid x_1\in [-r/2,r/2]\}$ is invariant by the Steiner symmetrization and contains $K^\star$. By using again the aforementioned monotonicity property, one has also $K'\subset \mathcal S$, and therefore, $r(K')\leq r=r(\mathcal S)$. Therefore, one has 
$r(K')=r$. It is standard that Steiner symmetrization reduces diameter. 
Moreover, since $[AB]$ is invariant by the Steiner symmetrization and since $[AB]\subset K'$, one has $D(K')\geq D$  and thus $D(K')=D$.
 
 Since $|K'|=|K^\star|$ by property of the Steiner symmetrization, it follows that $K'$ solves Problem~\eqref{SOP2}.

{It now remains to investigate the equality case, namely to compare $|K'|$ and $|K_I|$ where we recall that $K_I=\operatorname{hull}(A,B,B(I,r))$.  More precisely we will prove that $ K'$ has a larger volume than $K_I$.} In the basis $\mathcal{B}$, let $x_1^*\in (0,r)$ be such that
$
K_I\cap \{x_1\geq x_1^\star\}=B(I,r)\cap \{x_1\geq x_1^\star\}$
and $B(O,r)\cap \{x_1\geq x_1^\star\}\subsetneq K_O\cap \{x_1\geq x_1^\star\}.
$
The existence of $x_1^\star$ follows from the dissymmetry of $K_O$ with respect to $\mathcal{H}$. Using one more time the monotonicity property of 
the Steiner symmetrization with respect to the inclusion, one has 
$$
B(I,r)\cap \{x_1\geq x_1^\star\}\subsetneq K'\cap \{x_1\geq x_1^\star\},
$$
which implies that the volume of $K'$ is strictly larger than the one of $\operatorname{hull}(A,B,B(I,r))$. We have thus reached a contradiction and it follows that one has necessarily $O=I$, meaning that
$K^\star=\operatorname{hull}(A,B,B(I,r))$, which concludes the proof.


\section{Proof of Theorem \ref{theo:SOmax}}\label{sec:proofTheoMax}

In the whole proof, for a given set $K\in \mathcal K_2$, we will denote by $C_K$ an incircle of $K$. It is standard that $K$ is tangent to $C_K$ at two points at least. 

\begin{definition}
Let $K\in \mathcal K_2$. A point $x \in K$ is said to be diametral if there exists $y\in K$ such that $\Vert x-y\Vert=D(K)$.
\end{definition}

Obviously, if $x$ is diametral, then it belongs necessarily to $\partial K$. Denoting by $y$ its counterpart, if the boundary of $K$ is $\mathscr{C}^1$ at $x$, the outward unit normal vector at $x$ on $\partial K$ is $n(x)= (x-y)/\Vert x-y\Vert$.

In what follows, we will consider a solution $K^\star$ to Problem~\eqref{SOP}, whose existence is provided by Proposition~\ref{prop:exist}.

Since the area is maximized, it seems natural to look for the largest possible set and thus to saturate the diameter constraint at each point. Nevertheless, the inradius constraint tends to stick the convex body onto the circle. M.~Belloni and E.~Oudet in \cite{BelOud} worked on the minimal gap between the first eigenvalue of the Laplacian $\lambda_2$ and the first eigenvalue of the $\infty-$Laplacian $\lambda_\infty$. Since $\lambda_\infty(\Omega)=1/r(\Omega)$ and $\lambda_2$ is decreasing for the inclusion, some of their results were obtained by constructing bigger sets while maintaining the inradius and the diameter. The following lemma is an example.
\begin{lemma}[\cite{BelOud}]\label{lem1}
Let $x\in \partial K^{\star}$. Then, one has the following alternative:
\begin{enumerate}
\item $x$ is non diametral and belongs to the interior of a segment of $\partial K^{\star}$.
\item $x$ is diametral and is not in the interior of a segment of $\partial K^{\star}$.
\item $x$ is in the intersection of two segments of $\partial K^{\star}$.
\end{enumerate}
\end{lemma}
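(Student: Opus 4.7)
The plan is to proceed by contradiction. Suppose $x\in \partial K^{\star}$ satisfies none of the three alternatives. The failure of alternatives (1) and (3) means that $x$ does not lie in the relative interior of any straight segment contained in $\partial K^{\star}$ and is not a common endpoint of two such segments; combined with the convexity of $K^\star$, this forces the existence, in every neighborhood $U$ of $x$, of two boundary points $y_1,y_2\in \partial K^\star \cap U$ lying on opposite sides of $x$ along $\partial K^\star$ whose outward supporting lines $\ell_1,\ell_2$ are distinct. In particular, $\ell_1$ and $\ell_2$ intersect at a point $x'$ lying strictly outside $K^\star$.

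Given such $y_1, y_2, x'$, I would introduce the natural competitor
\[
K'=\operatorname{conv}\bigl(K^\star\cup\{x'\}\bigr),
\]
which strictly contains $K^\star$: the extra region is bounded by $[y_1,x']$, $[x',y_2]$ and the boundary arc $\arc{y_1y_2}$ and has positive area, so $|K'|>|K^\star|$. The heart of the argument is then to verify that, provided $U$ is chosen small enough, both prescribed constraints persist. For the diameter, failure of alternative (2) means $x$ is non-diametral, so there exists $\eta>0$ with $\max_{y\in K^\star}\Vert x-y\Vert\leq D-\eta$; by continuity this bound propagates to $x'$ and to every point of the extra triangular region when $U$ is small enough, so $D(K')=D$. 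For the inradius, $r(K')\geq r$ is immediate from $K^\star\subset K'$, and equality is obtained by arranging the modification to take place away from the tangent points of $C_{K^\star}$ with $\partial K^\star$, so that $C_{K^\star}$ remains the largest inscribed disk of $K'$. Combining these facts would give $K'\in \mathcal{K}^{2}_{r,D}$ with $|K'|>|K^\star|$, contradicting the optimality of $K^\star$.

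The main obstacle is the case where $x$ is itself a tangency point of $C_{K^\star}$, since the above construction then threatens to increase the inradius along with the area. Two complementary remedies need to be combined to handle this situation: one can exploit that an incircle of $K^\star$ is pinned by at least two antipodal (or three non-collinear) contact points elsewhere on $\partial K^\star$, which persist in $\partial K'$ and continue to prevent any larger inscribed disk; or, if needed, one can slide the local modification along $\partial K^\star$ from $x$ to a nearby non-tangency boundary point $\tilde x$ at which the boundary is still not flat (such a $\tilde x$ exists in every neighborhood of $x$ because $x$ is not in the closure of any segment of $\partial K^\star$, and the tangency set is nowhere dense there). Beyond this inradius-preservation issue, the rest of the argument is purely elementary convex geometry.
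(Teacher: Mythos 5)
The paper does not actually prove this statement --- it is quoted from Belloni--Oudet --- so your attempt can only be judged on its own merits. The strategy (inflate $K^{\star}$ locally at a non-diametral point where the boundary is not flat, via $\operatorname{conv}(K^\star\cup\{x'\})$) is the right one, but as written there are two genuine gaps. The first is logical: writing $d$ for ``$x$ is diametral'' and $s$ for ``$x$ lies in the interior of a segment of $\partial K^{\star}$'', the negation of the three alternatives is $(d\Leftrightarrow s)$ together with ``$x$ is not at the intersection of two segments''; it therefore contains not only your case $\neg d\wedge\neg s$ but also the case $d\wedge s$ (a diametral point interior to a segment), which you never treat. Your deduction hides this circularly: you extract $\neg s$ from the failure of (1), which is legitimate only once $\neg d$ is known, and you extract $\neg d$ from the failure of (2), which is legitimate only once $\neg s$ is known. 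The missing case is easy --- if $x$ is interior to $[a,b]\subset\partial K^{\star}$ and $\Vert x-y\Vert=D$, the strict convexity of $t\mapsto\Vert x+t(b-a)-y\Vert^{2}$ produces a point of $[a,b]$ at distance $>D$ from $y$ --- but it must be stated, and it is the only part of the lemma valid for every convex body rather than just the maximizer.

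The second gap is the substantive one: the inradius preservation is asserted rather than proved, and neither of your remedies closes it. That $C_{K^{\star}}$ ``remains the largest inscribed disk'' of $K'$ is precisely what is at issue: a disk of radius $>r$ centered elsewhere can exploit the bulge, and this is excluded only if the contact points outside the perturbed neighbourhood still pin the incircle (their contact directions not contained in an open half-circle). This can fail, e.g. when the incircle has exactly two antipodal contact points and $x$ is one of them: deleting the constraint at $x$ lets the incircle roll into the bulge. Your fallback of sliding the perturbation to a nearby non-tangency point rests on the claim that the tangency set is nowhere dense near $x$, which is unjustified: a priori $\partial K^{\star}$ may contain a whole arc of the incircle through $x$ (this happens for the two-cap body, which is admissible and is exactly the minimizer), in which case every nearby boundary point is a tangency point that is neither diametral nor in a segment, and both remedies fail simultaneously. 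The standard repair when $x$ is a contact point is to place the new vertex $x'$ \emph{on} the contact line at $x$, so that $K'$ stays inside the same supporting half-plane there and hence inside the strip or triangle $T(K^{\star})$ of contact lines, giving $r(K')\leq r$ directly; away from contact points one still needs the quantitative observation that any disk of radius $>r$ contained in $K'$ must overflow $T(K^{\star})$ only near a contact point, which a small bulge located elsewhere cannot accommodate. Without an argument of this kind the contradiction with optimality is not established.
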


To locate the segments of $\partial K^\star$ and provide an estimate of their numbers, we need the notion of {\it contact point}.

\begin{definition}\label{def:contact}
A contact point of $\partial K^\star$ is a point $x$ at the intersection of $\partial K^{\star}$ and an incircle $ C_{K^{\star}}$ of $K^\star$. Similarly, a contact line is a support line of $K^{\star}$ passing by a contact point. Note that it is also a support line of $C_{K^{\star}}$.
\end{definition}

Observe that the relative interior of a {segment of $\partial K^{\star}$} is necessarily made of non diametral points. 

Note that the incircle is \textit{a priori} not unique. Let us consider all the possibilities:
\begin{itemize}
\item {\bf case 1: }the incircle is not unique. In that case the convex $K^{\star}$ is necessarily included in a strip of width $2$, and every incircle touches both lines of the strip.

Indeed, let $C_{1}$ and $C_{2}$ be two incircle and $O_{1}$ and $O_{2}$ their center. We consider a basis in which the coordinates of $O_{1}$ are $(-a,0)$ and those of $O_{2}$ are $(a,0)$. Let $N_{i}$ (resp $S_{i}$) be the north (resp. south) pole of $C_{i}$. By convexity the rectangle $N_{1}N_{2}S_{2}S_{1}$ is included in $K^{\star}$. Now suppose that $K^{\star}$ is not included in the strip formed by the lines $(N_{1}N_{2})$ and $(S_{1}S_{2})$. Then there exist a point $M(x,y)\in K^{\star}$ with $-a\leq x\leq a$ and $y>1$. By construction, the pentagon $N_{1}MN_{2}S_{2}S_{1}$ 
is convex, included in $K^{\star}$, and its inradius is larger than $1$ (see Fig.~\ref{fig:strip}) which contradicts the inradius constraint.

\begin{figure}[H]
\begin{center}
\includegraphics[width=10cm,]{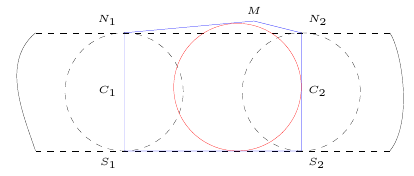}
\caption{The middle circle is larger than the others, so the inradius is larger than 1.\label{fig:strip}}
\end{center}
\end{figure}
%

\item {\bf case 1bis: }the incircle is unique, but still inscribed between two strips. In this case it is even included in a square, which is covered by the case $1$.

\item {\bf case 2 : }the incircle is unique, and there are exactly three contact lines, forming a triangle containing both the circle and the convex.

\begin{figure}[H]
\begin{center}
\includegraphics[width=6cm]{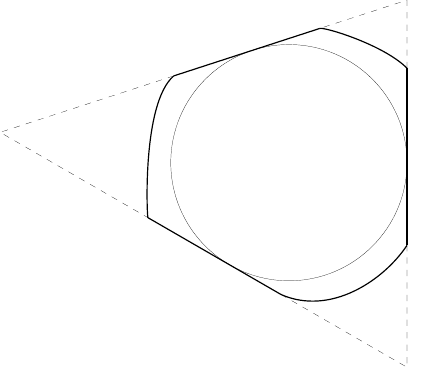}
\caption{A convex set with three contact points \label{fig:3cotes}}
\end{center}
\end{figure}
\end{itemize}

We sum-up these information in the following lemma.

\begin{lemma}\label{lem2}
Any  segment of $\partial K^{\star}$ contains a contact point. Furthermore, $\partial K^{\star}$ contains at most three segments.
\end{lemma}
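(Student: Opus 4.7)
The plan is to prove the two assertions in turn. For the first, I argue by contradiction via a local outward push. Suppose a maximal segment $S\subset\partial K^{\star}$ avoids every contact point, i.e.\ $S\cap C_{K^{\star}}=\emptyset$. By Lemma~\ref{lem1} the relative interior of $S$ consists of non-diametral points, so by continuity and compactness,
$$
\max_{x\in S'',\,y\in K^{\star}}\|x-y\|<D\quad\text{for every compact subsegment }S''\subset\operatorname{int}(S).
$$
Fix a short $[a,b]\subset\operatorname{int}(S)$, let $\mathbf n$ be the outward unit normal to $S$, and define the competitor
$$
K_{\epsilon}:=\operatorname{conv}\bigl(K^{\star}\cup [a+\epsilon\mathbf n,\,b+\epsilon\mathbf n]\bigr).
$$
The aim is to show that for every sufficiently small $\epsilon>0$, $K_{\epsilon}\in\mathcal{K}^{2}_{1,D}$ and $|K_{\epsilon}|>|K^{\star}|$, contradicting the optimality of $K^{\star}$.

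I would verify the three properties as follows. The area increases by at least $\epsilon\|a-b\|$ because $[a,b]$ lies in the interior of a flat portion of $\partial K^{\star}$, so that $K_{\epsilon}$ contains the rectangle built over $[a,b]$ of height $\epsilon$, with interior disjoint from $K^{\star}$. For the diameter, writing any new point of $K_{\epsilon}$ as $p+t\mathbf n$ with $p\in[a,b]$ and $t\in[0,\epsilon]$, the triangle inequality combined with the non-diametrality estimate (and the freedom to shorten $\|a-b\|$) keeps all pairwise distances strictly less than $D$ for $\epsilon$ small, while $K^{\star}\subset K_{\epsilon}$ gives $D(K_{\epsilon})\geq D$. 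For the inradius, I would use the classification of incircles established just before the lemma to isolate an enclosing convex region $\Omega\supset K^{\star}$ with $r(\Omega)=1$: the triangle $T$ formed by the three contact lines in case~2, or the strip of width $2$ in cases~1/1bis. The crucial geometric point is that $\operatorname{int}(S)$ lies in the interior of $\Omega$; otherwise, since $S$ is a line segment contained in $K^{\star}\subset\Omega$, the only way it can touch $\partial\Omega$ is along a whole side $L$, which is a contact line. But then $S$, as the maximal segment of $\partial K^{\star}$ on $L$, would coincide with $K^{\star}\cap L$ and thus contain the tangency point of $L$ with (any) incircle, contradicting the standing assumption. Hence for $\epsilon$ small the pushed piece remains inside $\Omega$, giving $r(K_{\epsilon})\leq 1$, while $K^{\star}\subset K_{\epsilon}$ yields the reverse inequality.

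For the second assertion, each maximal segment $S\subset\partial K^{\star}$ contains a contact point $p$ by part one. The line $L$ carrying $S$ is a support line of $K^{\star}$ at $p$ and, since $C_{K^{\star}}\subset K^{\star}$ and $p\in C_{K^{\star}}\cap L$, also a support line of the disk $C_{K^{\star}}$ at $p$; by smoothness of $C_{K^{\star}}$, $L$ is then the unique tangent to $C_{K^{\star}}$ at $p$, i.e.\ a contact line in the sense of Definition~\ref{def:contact}. Two distinct maximal segments of $\partial K^{\star}$ must lie on distinct contact lines, for otherwise they would both be contained in the single segment $L\cap K^{\star}$ and coincide by maximality. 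Combining with the preceding classification, which bounds the number of contact lines by three (three sides of the enclosing triangle in case~2; an analogous count reducing to at most three in cases~1/1bis), one gets the claimed bound.

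The main difficulty I expect is the inradius check for the competitor $K_{\epsilon}$: being locally away from one particular incircle does not by itself rule out a strictly bigger disk fitting inside the enlarged set. The cleanest remedy is the globally enclosing polygon or strip $\Omega$ determined by the contact structure of $K^{\star}$ as a whole; once it is in place, the non-diametrality of $\operatorname{int}(S)$ takes care of the diameter constraint, and the second assertion drops out as a direct counting corollary.
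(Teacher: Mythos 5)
Your strategy coincides with the paper's, which disposes of this lemma in two sentences: a segment carrying no contact point can be inflated, and the contact structure (strip or triangle) caps the number of useful segments. Your write-up is a genuine and largely correct expansion of that sketch, and the key device --- the enclosing region $\Omega$ (the width-$2$ strip or the triangle of contact lines) with $r(\Omega)=1$, which settles the inradius of the competitor --- is exactly the right one.

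Two steps need repair. First, it is not true that every new point of $K_{\epsilon}=\operatorname{conv}\bigl(K^{\star}\cup[a+\epsilon\mathbf{n},b+\epsilon\mathbf{n}]\bigr)$ has the form $p+t\mathbf{n}$ with $p\in[a,b]$: the part of $K_{\epsilon}$ beyond the line $L$ carrying $S$ is (up to thin slivers) the trapezoid $\operatorname{conv}\{s_1,s_2,a+\epsilon\mathbf{n},b+\epsilon\mathbf{n}\}$, where $s_1,s_2$ are the endpoints of the \emph{maximal} segment $S=K^{\star}\cap L$, and those endpoints may well be diametral (they are for $K_E(D)$ and $K_S(D)$), so your pointwise estimate does not cover the new points near $s_1,s_2$. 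The conclusion survives, but the clean route is that the diameter of a convex hull is realized on the generating set, so $D(K_{\epsilon})=\max\bigl(D(K^{\star}),\sup_{z\in K^{\star}}\|z-(a+\epsilon\mathbf{n})\|,\sup_{z\in K^{\star}}\|z-(b+\epsilon\mathbf{n})\|,\|a-b\|\bigr)$, and only the non-diametrality of $a$ and $b$ themselves is needed. Second, your count of contact lines is not justified in case~1: when the incircle is non-unique, the two extreme incircles may touch $\partial K^{\star}$ at non-polar points, producing oblique contact lines in addition to the two lines of the strip; a segment sitting on such an oblique tangent \emph{does} contain a contact point, so your first assertion does not exclude it, and "at most three contact lines'' is then not automatic (a similar issue arises in case~2 if the incircle meets $\partial K^{\star}$ in more than three points). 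The fix is to run your inflation argument against $\Omega$ rather than against contact points: any segment whose relative interior lies in $\operatorname{int}(\Omega)$ can be pushed outward while staying inside $\Omega$, with the same area/diameter/inradius verification, so at the optimum every segment lies on a side of $\Omega$, of which there are two (strip) or three (triangle). This single argument yields both assertions at once and is, in substance, what the paper's "useless segments can be inflated'' is gesturing at.
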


\begin{proof}
If a  segment of  $\partial K^{\star}$ does not touch an incircle, it would be possible to inflate this part without changing the inradius nor violating the diameter constraint. The upper bound on the number of  segments is a direct consequence of the previous analysis: if $K$ has more than the minimal numbers of segments that are useful to prescribe the incircle, then some are useless and can be inflated without consequences on the constraints. \end{proof}

In what follows, we will work separately on the cases 1 and 2. Section~\ref{strip} deals with the first case, whereas Section~\ref{triangle} is devoted to the investigation of the second case.

Thanks to an easy renormalization argument, we will assume without loss of generality that the inradius of the considered convex sets is equal to 1 ($r=1$).

%
%
%
$\operatorname{Diam}(K)=D$ but $\vert K \vert > \vert K^{\star}\vert $, 
leading to a contradiction with the optimality of $K^{\star}$.
%
%
\subsection{First case: $K^\star$ is included in a strip}\label{strip}

Let $C_{K^\star}$ be an incircle of $K^\star$. To investigate the case where $K^\star$ is included in a strip, we consider a basis $\mathcal B$ whose origin $O$ is the center of $C_{K^\star}$ and such that the equations 
of the two contact points support lines are  $x=1$ and $x=-1$ (see Fig.~\ref{fig:stadiumcase1}). Let us denote by $\mathcal S$, the closed strip $\{(x,y)\in \R^2 \mid |x|\leq 1\}$.

We investigate in this section a constrained version of Problem~\eqref{SOP}, namely
\begin{equation}\label{SOP1}\tag{$\mathcal{P'}$}
\sup_{\substack{K\in \mathcal{K}_{r,D}^2\\ K\subset \mathcal S}}\vert K \vert.
\end{equation}


\begin{proposition}\label{prop:case1}
The symmetric slice $B(O,D/2)\cap \mathcal S$, where $B(O,D/2)$ denotes the open ball centered at $O$ with radius $D/2$, is the unique solution of 
Problem \eqref{SOP1}. The optimal area is 
$$
\max_{\substack{K\in \mathcal{K}_{r,D}^2\\ K\subset \mathcal S}}\vert K \vert =\sqrt{D^{2}-4}+\frac{D^{2}}{2}\arcsin\left(\frac{2}{D}\right).
$$
\end{proposition}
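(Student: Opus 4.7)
The plan is to reduce the problem to a doubly-symmetric situation via Steiner symmetrization, extract a sharp pointwise bound from the diameter constraint, and integrate.

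First, I would apply Steiner symmetrization to $K$ successively with respect to the horizontal axis $\{y=0\}$ and then the vertical axis $\{x=0\}$. Each operation preserves convexity and area, does not increase the diameter, and preserves the inclusion $K \subset \mathcal{S}$ (because $\mathcal{S}$ is symmetric with respect to both axes, so that Steiner symmetrization in a subset is controlled by symmetrization of the ambient set). It also preserves the existence of an inscribed unit disk: since $K \subset \mathcal{S}$ and $r(K)=1$, any incircle of $K$ is centered on the $y$-axis, and the first symmetrization transforms it into $B(O,1)$. Denote by $\widetilde{K}$ the resulting doubly symmetric convex set; then $|\widetilde{K}| = |K|$, $\widetilde{K} \subset \mathcal{S}$, $D(\widetilde{K}) \leq D$, and $B(O,1) \subset \widetilde{K}$.

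By double symmetry and convexity, one may write
$$\widetilde{K} = \{(x,y) \in \mathbb{R}^2 : \ |x| \leq 1, \ |y| \leq u(x)\}$$
for an even concave function $u:[-1,1]\to[0,+\infty)$ satisfying $u(x) \geq \sqrt{1-x^2}$. The key observation is that the two boundary points $(x,u(x))$ and $(-x,-u(x))$ both lie in $\widetilde{K}$, so the diameter constraint yields
$$4x^2 + 4u(x)^2 \leq D^2, \qquad \text{hence} \qquad u(x) \leq \sqrt{D^2/4 - x^2}, \quad x\in[-1,1].$$
Integrating this pointwise bound,
$$|K| = |\widetilde{K}| = 2\int_{-1}^{1} u(x)\,dx \leq 2\int_{-1}^{1}\sqrt{D^2/4 - x^2}\,dx = \sqrt{D^2-4} + \frac{D^2}{2}\arcsin\!\left(\frac{2}{D}\right),$$
via the standard substitution $x = (D/2)\sin\theta$. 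The right-hand side is exactly $|B(O,D/2)\cap \mathcal{S}|$, so the symmetric slice is a maximizer.

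For uniqueness, equality in the pointwise bound forces $u(x) \equiv \sqrt{D^2/4 - x^2}$, so $\widetilde{K}$ is the symmetric slice. Retracing through the symmetrizations, any convex set whose vertical sections have the prescribed lengths $2\sqrt{D^2/4-x^2}$ must be of the form $\{(x,y):|x|\leq 1,\ \alpha x + \beta - \sqrt{D^2/4-x^2} \leq y \leq \alpha x + \beta + \sqrt{D^2/4-x^2}\}$, i.e., a sheared and translated slice, and a direct computation on the pair $(1,\sqrt{D^2/4-1})$, $(-1,-\sqrt{D^2/4-1})$ shows that any nonzero shear $\alpha$ strictly exceeds the diameter bound $D$, forcing $\alpha = 0$. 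Fixing the incircle at $O$ removes the vertical translation freedom and gives uniqueness. The main obstacle is this final equality analysis: Steiner symmetrization can identify distinct convex sets, so upper-bound sharpness comes for free, but identifying exactly which convex sets admit the slice as their symmetrization and ruling them out via the diameter constraint requires separate work.
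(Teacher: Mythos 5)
Your symmetrization argument for the \emph{upper bound} is correct and in fact streamlines the paper's Lemma~\ref{lem3}: by working with $D(\widetilde K)\leq D$ and the antipodal pair $(x,u(x))$, $(-x,-u(x))$ you get the pointwise bound $u(x)\leq y_D(x)$ and integrate, whereas the paper has to argue separately that each symmetrization preserves the diameter exactly. The value computation is also right.

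The uniqueness half, however, has a genuine gap, in two places. First, the ``retracing'' step does not work as stated: equality forces $\widetilde K=S_2(S_1(K))$ to be the slice, but the second Steiner symmetrization $S_2$ (recentering \emph{horizontal} chords on the $y$-axis) does not preserve \emph{vertical} chord lengths, so you cannot conclude that the vertical sections of the original $K$ have length $2y_D(x)$. This is repairable without symmetrization at all: writing $K=\{-g(x)\leq y\leq f(x)\}$ and applying the diameter bound to the pairs $(x,f(x))$, $(-x,-g(-x))$ gives $f(x)+g(-x)\leq 2y_D(x)$, and $|K|=\tfrac12\int_{-1}^{1}\bigl(f(x)+g(-x)+f(-x)+g(x)\bigr)dx$ then shows equality forces $f(x)+g(-x)=2y_D(x)$ for all $x$ (this is precisely~\eqref{prop:strip}). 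Second, and more seriously, your claimed characterization of the equality class is false: the sets satisfying this section constraint are of the form $f=y_D+\varphi$, $g=y_D-\varphi(-\cdot)$ for \emph{any} function $\varphi$ keeping $f$ and $g$ concave (e.g.\ any $\varphi$ with $|\varphi''|\leq -y_D''$), not only affine shears. Ruling these out is the actual content of uniqueness, and it requires exploiting the diameter constraint for \emph{non-antipodal} pairs $(x,f(x))$, $(x',-g(x'))$; the paper's Lemma~\ref{lem:unique} does this by showing that the map $x'\mapsto (x-x')^2+(f(x)+g(x'))^2$ is maximized at $x'=-x$, whence $\varphi'=0$ a.e.\ and $\varphi$ is constant. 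Your final dichotomy (``either $\alpha=0$ or the shear violates the diameter'') addresses only the two-parameter affine family and therefore misses the main difficulty.
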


The set $K^\star$ is plotted on Fig~\ref{fig:stadiumcase1} right.
\begin{figure}[H]
\begin{center}
\begin{tabular}{cc}
\begin{minipage}{4cm}
\phantom{a}\\
\includegraphics[width=3cm, angle=0]{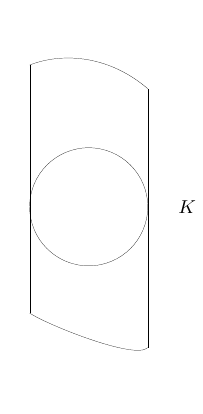} \hspace{5cm}
\end{minipage} & 
\begin{minipage}{4cm}
\includegraphics[width=3.2cm,angle=0]{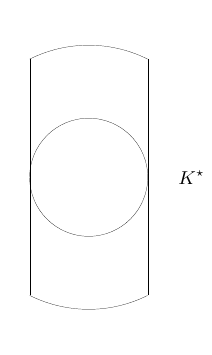}
\end{minipage}
\end{tabular}
\caption{Left: a convex set {$K$} whose (non unique) incircle has two parallel contact lines. Right: the optimal domain {$K^\star$} among convex sets included in a slice.\label{fig:stadiumcase1}}
\end{center}
\end{figure}
The end of this section is devoted to the proof of Prop.~\ref{prop:case1}. It is straightforward that, if a convex set $K$ belongs to $\mathcal{K}_{r,D}^2$ and is included in $\mathcal S$, then there exist two concave 
nonnegative functions $f$ and $g$ on $[-1,1]$ such that 
\begin{equation}\label{Kstarparam:fg}
K=\{(x,y)\in \R^{2}, x\in [-1,1], -g(x)\leq y\leq f(x)\}.
\end{equation}
With these notations, the optimal set $K^\star$ introduced in Prop.~\ref{prop:case1} corresponds to the choices
$$
f=y_D, \quad g=y_D\quad \text{ where }\quad y_D(x)=\sqrt{D^{2}/4-x^{2}}.
$$

The proof consists of two steps: first, we provide necessary optimality conditions on an optimal pair $(f,g)$ and show in particular that the aforementioned symmetric slice is a solution. Then, we investigate uniqueness 
properties of the optimum.

\begin{lemma}\label{lem3}
Let $K^{\star}$ be a solution of Problem~\eqref{SOP1}. Then, $K^\star$ is 
of the form \eqref{Kstarparam:fg} and satisfies
\begin{equation}\label{prop:strip}
f(x)+g(x)+f(-x)+g(-x)=4y_D(x), \quad x\in [-1,1].
\end{equation}
Furthermore, the convex set $\tilde K$ of the form  \eqref{Kstarparam:fg} 
with $f=g=y_D$ solves Problem \eqref{SOP1}.
\end{lemma}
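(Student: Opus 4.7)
The plan is to exploit the diameter constraint on symmetric pairs of points $(x,f(x))$ and $(-x,-g(-x))$, then invoke a change-of-variable $x\mapsto -x$ to bound the area of any admissible $K$ by that of $\tilde K$, and finally check that $\tilde K$ achieves this bound.

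First, I would justify the parametrization \eqref{Kstarparam:fg}. Since $K^\star\subset\mathcal S$ is compact and convex, each vertical cross-section $K^\star\cap\{X\cdot e_1=x\}$ is a (possibly empty) segment; setting $f(x)=\sup\{y:(x,y)\in K^\star\}$ and $g(x)=-\inf\{y:(x,y)\in K^\star\}$ gives two concave functions on $[-1,1]$. They are non-negative because the inscribed disk $B(O,1)\subset K^\star$ provides a lower bound $f(x),g(x)\geq\sqrt{1-x^2}$. Next, for every $x\in[-1,1]$, the two points $(x,f(x))$ and $(-x,-g(-x))$ lie in $K^\star$, hence the diameter constraint gives
$$\sqrt{4x^2+(f(x)+g(-x))^2}\leq D,\qquad\text{i.e.}\qquad f(x)+g(-x)\leq 2y_D(x).$$
Applying the same reasoning to the pair $(-x,f(-x))$, $(x,-g(x))$ yields $f(-x)+g(x)\leq 2y_D(x)$, and summing produces the pointwise inequality $f(x)+g(x)+f(-x)+g(-x)\leq 4y_D(x)$.

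The area of $K^\star$ then satisfies, via the substitution $x\mapsto -x$ in one half of the integral,
$$|K^\star|=\int_{-1}^1(f(x)+g(x))\,dx=\frac{1}{2}\int_{-1}^1\bigl(f(x)+g(x)+f(-x)+g(-x)\bigr)dx\leq\int_{-1}^1 2y_D(x)\,dx.$$
The right-hand side is exactly the area of $\tilde K=B(O,D/2)\cap\mathcal S$, whose value $\sqrt{D^2-4}+\tfrac{D^2}{2}\arcsin(2/D)$ is obtained by the standard substitution $x=(D/2)\sin\theta$. It remains to check that $\tilde K$ is admissible: it contains $B(O,1)$ and lies in $\mathcal S$, so $r(\tilde K)=1$; it is contained in $B(O,D/2)$, and the two points $(\pm\sqrt{D^2/4-1},\mp 1)\in\tilde K$ are at distance exactly $D$, so $D(\tilde K)=D$.

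Hence $|K^\star|\leq|\tilde K|$ with $\tilde K$ admissible, which simultaneously proves that $\tilde K$ solves \eqref{SOP1} and forces equality throughout when $K^\star$ is optimal. The integrand $4y_D(x)-(f(x)+g(x)+f(-x)+g(-x))$ is non-negative with vanishing integral, hence vanishes almost everywhere; by continuity of concave functions on the interior of $[-1,1]$ and by a boundary argument using boundedness, the identity \eqref{prop:strip} holds everywhere on $[-1,1]$. No step presents a serious obstacle; the only technical point is the careful pairing of points that makes the diameter bound separate the contributions of $f$ and $g$ evaluated at $\pm x$.
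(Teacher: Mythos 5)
Your proof is correct, and it takes a genuinely different and more elementary route than the paper. The paper proceeds by two successive symmetrization steps (first replacing $f$ and $g$ by $(f+g)/2$, then averaging in $x$), each time using a convexity inequality on the diameter functional and the optimality of $K^\star$ to rule out a strict decrease of the diameter, together with the Belloni--Oudet observation that every free boundary point of an optimizer is diametral. You instead pair the points $(x,f(x))$ and $(-x,-g(-x))$ directly and read off the pointwise bound $f(x)+g(-x)\leq 2y_D(x)$ from the constraint $D(K)\leq D$ alone; summing with the reflected pair and integrating gives $|K|\leq|\tilde K|$ for \emph{every} admissible $K$ (not just the optimizer), and the equality case of a nonnegative continuous integrand with zero integral yields \eqref{prop:strip}. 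This is shorter, avoids the diametrality lemma entirely, and as a bonus delivers the value of the maximum in Proposition~\ref{prop:case1} in one stroke; what it does not give (and does not claim to) is the uniqueness statement, which the paper still has to establish separately in Lemma~\ref{lem:unique} — and there the diametrality of free boundary points is genuinely used. Two small points to fix: the corner points of $\tilde K$ are $(\pm 1,\mp\sqrt{D^2/4-1})$, not $(\pm\sqrt{D^2/4-1},\mp 1)$ (your pair leaves the strip as soon as $D>2\sqrt{2}$); it is simpler to exhibit $(0,\pm D/2)\in\tilde K$ to get $D(\tilde K)=D$. And the continuity of $f,g$ up to the endpoints $x=\pm1$, which you invoke for the equality case, deserves the one-line justification that the upper and lower boundary functions of a compact convex set are both concave and upper semicontinuous, hence continuous on the closed projection interval.
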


\begin{proof}
We already know that $K^{\star}$ writes as \eqref{Kstarparam:fg} for some 
positive concave functions $f$ and $g$.

First, by lemma \ref{lem1}, every point of the free boundary part $\partial K^\star_{\rm free}:=\partial K^\star \cap \{(x,y)\in \R^2\mid x\in (-1,1)\}$ is necessarily diametral. As a consequence, the functions $f$ and $g$ are strictly concave. Indeed, observe that a segment of the boundary of a convex set contains at most two diametral points. 
 
From the parametrization of $K^\star$, we get
\begin{equation}
D^{2}=\max_{(x,x')\in [-1,1]^2} (x-x')^{2}+(f(x)+g(x'))^{2}
\quad \text{and}\quad \vert K^{\star}\vert=\int_{-1}^{1}(f+g).
\end{equation}

We are going to prove the result by performing two consecutive Steiner symmetrizations, the first in the horizontal axis, the second in the vertical axis. Note that those two particular symmetrizations do not change the 
inradius.

\medskip

Let us introduce the set $\hat K$ of the form \eqref{Kstarparam:fg} where 
$f$ and $g$ are both replaced by $(f+g)/2$. In other words, $\hat K$ is the Steiner symmetrized of $K^\star$ with respect to the horizontal axis. Hence, one gets easily that $\vert K^{\star} \vert =\vert \hat K\vert$, 
and $D(\hat K)\leq D(K^\star)$. 
Moreover, if $D(\hat K)<D(K^\star)$, then $\hat K$ is a convex set having 
the same area as $K^\star$, but a strictly lower diameter. Mimicking the argument used in the proof of Lemma \ref{train0818} allows us to obtain a 
convex set in $\mathcal{K}^{2}_{1D}$ with a larger area than $K^{\star}$, 
which is impossible. It follows that one has necessarily $D(\hat K)=D(K^\star)$.

Let us set $f^{\star}=(f+g)/2$ and let $x\in [-1,1]$. 
%
%
%
%
%
Let $K_{\tilde f}$ be a set of the form \eqref{Kstarparam:fg} where $f$ and $g$ are both replaced by $\tilde f$ defined by
$$
\tilde f(x)= \frac{f^{\star}(x)+f^{\star}(-x)}{2}, \quad x\in [-1,1].
$$
{In other words, $K_{\tilde f}$ corresponds to the Steiner symmetrization of $\hat K$ with respect to the vertical axis. Then, using one more 
time standard properties of the Steiner symmetrization, one gets that $\vert K^{\star}\vert = \vert K_{\tilde f}\vert$ and for the same reasons as before, we have $D(K^\star)=D(K_{\tilde f})$.  Therefore, we have constructed a solution with two axes of symmetry. }


It follows that $K_{\tilde f}$ solves Problem~\eqref{SOP1}. Furthermore, using that $\tilde f$ is even and that each point $(x,\tilde f(x))$ is diametral, associated to $(-x,-\tilde f(x))$, we finally infer that $x^{2}+\tilde{f}(x)^{2}=D^{2}/4$ for all $x\in [-1,1]$. Noting that 
$$
\tilde{f}(x)=\frac{1}{4}(f(x)+g(x)+f(-x)+g(-x)),
$$
every solution $K^\star$ is of the form \eqref{Kstarparam:fg} satisfies \eqref{prop:strip}.
Proposition~\ref{prop:case1} thus follows.
\end{proof}

It remains to investigate the uniqueness of the optimal set, which is the 
purpose of the next result.

\begin{lemma}\label{lem:unique}
Let $K^\star$ be a solution of Problem~\eqref{SOP1}. Then, $K^\star$ is of the form \eqref{Kstarparam:fg}, and for every parametrization $(f,g)$, there exists $\varepsilon>0$ such that:
$$
f(x)=y_D(x)+\varepsilon, \qquad g(x)= y_D(x)-\varepsilon, \quad x\in [-1,1].
$$
\end{lemma}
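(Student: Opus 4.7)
The plan is to upgrade the averaging argument of Lemma \ref{lem3} by carefully tracking the equality cases of its two Jensen inequalities, and then to exploit a tangent-parallel condition at each diametral pair. Recalling the chain $K^\star \to \hat K \to \tilde K$ from Lemma \ref{lem3}, in which $\tilde K$ is the symmetric slice (so its diametral pairs are precisely the couples $(x,-x)$ for $x\in(-1,1)$), I set
$$
A(x,x') := (x-x')^{2} + (f(x)+g(x'))^{2}, \qquad B(x,x') := (x-x')^{2} + (f^{\star}(x)+f^{\star}(x'))^{2},
$$
with $f^{\star} = (f+g)/2$. The second Jensen step of Lemma \ref{lem3} reads $C(x,x') \leq (B(x,x')+B(-x,-x'))/2 \leq D^{2}$; tightness at $(x,-x)$, which is a maximizer of $C$ since $\tilde K$ has diameter $D$, forces $B(x,-x) = D^{2}$, so $(x,-x)$ is a diametral pair of $\hat K$ for every $x \in (-1,1)$. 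The first Jensen step reads $B(x,x') \leq (A(x,x')+A(x',x))/2 \leq D^{2}$; tightness at the maximizer $(x,-x)$ of $B$ then yields $A(x,-x) = A(-x,x) = D^{2}$. Hence $(x,f(x))$ and $(-x,-g(-x))$ is a diametral pair of $K^{\star}$ for every $x \in (-1,1)$, and writing out the distance condition gives
$$
f(x) + g(-x) = 2\,y_{D}(x), \qquad x \in [-1,1],
$$
the endpoint values following from continuity of $f$ and $g$ on $[-1,1]$.

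As in the proof of Lemma \ref{lem3}, the fact that each interior boundary point of $K^{\star}$ is diametral forces $f$ and $g$ to be strictly concave, hence differentiable almost everywhere on $(-1,1)$. At almost every such $x$, the outer unit normals of $K^{\star}$ at the diametrally opposite points $(x,f(x))$ and $(-x,-g(-x))$ are antipodal; writing these normals out from the graphs $y=f$ and $y=-g$ produces
$$
f'(x) + g'(-x) = 0 \qquad \text{for a.e.\ } x \in (-1,1).
$$
Differentiating the distance identity of the first step almost everywhere yields $f'(x) - g'(-x) = 2\,y_{D}'(x)$; adding the two relations gives $f'(x) = y_{D}'(x)$ almost everywhere. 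Since $f - y_{D}$ is continuous on $[-1,1]$ and has vanishing a.e.\ derivative, it is constant, which I call $\varepsilon$. Substituting $f = y_{D} + \varepsilon$ into the distance identity and using the evenness of $y_{D}$ produces $g = y_{D} - \varepsilon$ on $[-1,1]$, which is the claimed parametrization.

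The conceptual obstacle lies entirely in the first step: one cannot transfer the diametral-pairing property from the doubly symmetric $\tilde K$ directly to $K^{\star}$, because the intermediate set $\hat K$ is only top-bottom symmetric and could \emph{a priori} admit a more complicated diametral map. Only the simultaneous use of both Jensen equality cases --- together with the fact that optimality of $K^{\star}$ forces equality to hold at every maximizer of $B$ (resp.\ of $C$) --- allows the relation $A(x,-x) = D^{2}$ to be traced all the way back from $\tilde K$ through $\hat K$ to $K^{\star}$; once this is done, the ODE step is immediate.
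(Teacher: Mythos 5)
Your proof is correct, and while it rests on the same two pillars as the paper's argument --- the symmetrization chain of Lemma~\ref{lem3} and a stationarity condition at diametral pairs --- it organizes them in a genuinely different and arguably cleaner way. The paper first writes $(f+g)/2=y_D+\varphi_o$ with $\varphi_o$ odd (a consequence of \eqref{prop:strip}), kills $\varphi_o$ by differentiating the squared-distance function $x'\mapsto (x-x')^2+(\cdots)^2$ at its maximizer $x'=-x$, then proves in a separate step that $\varphi_e=f-y_D$ is even via a diameter inequality, and finally kills $\varphi_e'$ by repeating the stationarity computation. You instead trace the equality cases of the two Jensen inequalities backwards through the chain $\tilde K\to\hat K\to K^\star$ (using that each intermediate set still has diameter $D$ and that $(x,-x)$ realizes the diameter of the symmetric slice), which yields the single cross-identity $f(x)+g(-x)=2y_D(x)$ in one stroke and replaces the paper's odd/even decomposition entirely; in particular it exhibits explicitly that $\bigl((x,f(x)),(-x,-g(-x))\bigr)$ is a diametral pair of $K^\star$ itself. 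Your normal-antipodality relation $f'(x)+g'(-x)=0$ is then exactly the geometric content of the paper's ``derivative vanishes at $x'=-x$'' computation, applied directly to $K^\star$ rather than to its symmetrization. Two micro-remarks: the condition at the upper point alone (the outward normal $(-f'(x),1)/\sqrt{1+f'(x)^2}$ must equal $(2x,2y_D(x))/D$) already gives $f'(x)=y_D'(x)$ almost everywhere, so the lower point and the differentiated identity are not strictly needed; and your $\varepsilon$ is of course only a real constant of small modulus rather than a positive one, but that imprecision is already present in the statement of the lemma and in the paper's own proof.
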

\begin{proof}
Let $(f,g)$ be a pair of concave positive functions solving Problem~\eqref{SOP1}. In particular, $(f,g)$ satisfies \eqref{prop:strip}. It follows from the proof of Lemma~\ref{lem3} that there exists a continuous odd function $\varphi_o$ on $[-1,1]$ such that
$$
\frac{f(x)+g(x)}{2}=y_D(x)+\varphi_o(x).
$$
 Let $K$ be the convex set defined by \eqref{Kstarparam:fg} where $f$ and 
$g$ are both replaced by $(f+g)/2$. Recall that, according to the proof of Lemma~\ref{lem3}, $K$ is also a solution of Problem~\eqref{SOP1}.
Let us focus on the diameter constraint. Since $K$ solves Problem~\eqref{SOP1}, then one has necessarily
\begin{eqnarray*}
D^2&=&\max_{(x,x')\in [-1,1]^2} (x-x')^{2}+\left(y_D(x)+y_D(x')+\varphi_o(x)+\varphi_o(x')\right)^2\\
&\geq & \max_{x\in [-1,1]} (2x)^{2}+\left(y_D(x)+y_D(-x)\right)^2=D^2.
\end{eqnarray*}
In particular, since every point of $\partial K\cap \{(x,y)\in \R^2\mid x\in (-1,1)\}$ is diametral, the function $[-1,1]\ni x'\mapsto (x-x')^{2}+\left(y_D(x)+y_D(x')+\varphi_o(x)+\varphi_o(x')\right)^2$ is maximal at $x'=-x$. Note that the function $y_D+\varphi_o$ is (concave and therefore) differentiable almost everywhere in $(-1,1)$, and therefore so is $\varphi_o$. Let us consider $x\in [-1,1]$ at which $\varphi_o$ is differentiable. One has 
 $$
\left. \frac{d}{dx'}\left((x-x')^{2}+\left(y_D(x)+y_D(x')+\varphi_o(x)+\varphi_o(x')\right)^2\right)\right|_{x'=-x}=0
 $$
which reads
$
-4x+4y_D(x)(-y_D'(x)+\varphi_o'(x))=0,
$
and after calculation, implies that $\varphi_o'(x)=0$. We infer that $\varphi_o'(x)=0$ for a.e. $x\in (-1,1)$. Since $\varphi_o$ is absolutely 
continuous (and even belongs to $W^{1,\infty}(-1,1)$), we infer that $\varphi_o$ is constant on $(-1,1)$, equal to $\varphi_o(0)=0$. It follows that $(f+g)/2=y_D$ and we infer that
$$
f(x) =y_D(x)+\varphi_e(x)\quad \text{and}\quad 
g(x) = y_D(x)-\varphi_e(x),
$$
where $\varphi_e$ denotes a continuous function on $[-1,1]$. One has for every $x\in [-1,1]$,
\begin{eqnarray*}
D^2&=&\max_{(x,x')\in [-1,1]^2} (x-x')^{2}+\left(y_D(x)+y_D(x')+\varphi_e(x)-\varphi_e(x')\right)^2\\
&\geq  &D^2+4y_D(x)\left(\varphi_e(x)-\varphi_e(-x)\right)+\left(\varphi_e(x)-\varphi_e(-x)\right)^2.
\end{eqnarray*}
and therefore, $4y_D(x)\left(\varphi_e(x)-\varphi_e(-x)\right)+\left(\varphi_e(x)-\varphi_e(-x)\right)^2\leq 0$ so that
$$
-4y_D(x)\leq \varphi_e(x)-\varphi_e(-x)\leq 0.
$$
Inverting the roles played by $x$ and $-x$ in this relation yields that $ 
\varphi_e(x)-\varphi_e(-x)=0$ {and $\varphi_e$ is therefore even.}

By using the same reasoning as above, one shows that for almost every $x$ 
in $(-1,1)$, the derivative of the diameter functional vanishes at $x'=-x$, so that one has $\varphi_e'(x)=0$ a.e. $x$ in $(-1,1)$.  Since $\varphi_e$ belongs to $W^{1,\infty}(-1,1)$ and is in particular absolutely continuous, we infer that $\varphi_e$ is constant on $[-1,1]$. The expected conclusion follows noticing that the converse sense is immediate: every pair $(f,g)$ chosen as in the statement of Lemma~\ref{lem:unique} obviously drives to a solution of Problem~\eqref{SOP1}.  
\end{proof}

\begin{remark}[Geometric interpretation of the proof]
The proof of Lemma~\ref{lem3} can be understood geometrically: indeed, from a solution, we performed two Steiner symmetrizations: one along the strip, and the other in an orthogonal direction. {From the standard properties of Steiner symmetrization (we proved some of them for the sake of completeness) and because of the specific choice of the symmetrization axes, the inradius remains unchanged in this particular case, as well as the area, but the diameter decreases.
} The difficulty here lies in proving that the diameter is strictly decreasing, whence the uniqueness. 
\end{remark}

\subsection{Second case: $K^*$ is included in a triangle}\label{triangle}

In that case, the incircle is unique (see Fig~\ref{fig:3cotes}). We assume without loss of generality that it is the unit circle. There are exactly three contact lines (see Def.~\ref{def:contact}), forming a triangle called $T(K)$.

\begin{definition}
We will call ``free boundary $\gamma$ of $\partial{K^{\star}}$'' the union of all non flat parts of  $\partial{K^{\star}}$ and ``free zone'' every 
connected component of the free boundary.
$D$ is the full disk. 
\end{definition}

Recall that according to Lemma~\ref{lem2}, there are at most three free zones located between the contact segments.

\vspace{3mm}

A crucial tool for the analysis is the so-called {\it support function} of the convex body $K$ denoted $h_{K}$. Recall that $h_{K}$ is defined for 
every $\theta \in \mathbb{T}$ by
\begin{equation}
h_{K}(\theta)=\sup_{y\in K}  y\cdot u_\theta 
\end{equation}
where $u_\theta=(\cos(\theta),\sin(\theta)$, and $\mathbb{T}$ is the torus $\R/ [0,2\pi)$. 
We will systematically choose the center of the circle as the origin.
angle $\theta$:
The straight line $D_{\theta}$ whose cartesian equation is $x\cos(\theta)+y\sin(\theta)=h_{K}(\theta)$ is precisely the support line of the convex body $K$ in the direction $u_\theta $ (in what follows, we will also name this direction $\theta$ with a slight abuse of language). 

{Let us introduce the sets $F_{\theta}:= D_{\theta}\cap K$. Note that $F_\theta$ is either a segment or a single point. In the latter case, 
we will denote this point by $M(\theta)$}.

Let us finally recall some basic facts on the support function. For a complete survey about this notion, we refer for instance to \cite{Schneider}. When there will be no ambiguity, we will sometimes write $h$ instead of $h_K$.

The support function $h$ associated to a convex body $K$ is periodic, belongs to $H^{1}(\mathbb{T})$ and is $C^{1}$ on the strictly convex parts of $K$. Furthermore, the diameter $D(K)$, area $|K|$ and radius of curvature $R_K$ are respectively given in terms of $h$ by

\begin{equation}\label{DiamSupport}
D(K)=\sup_{(0,2\pi)}\left(h(\theta)+h(\theta+\pi)\right),\quad 
\vert K\vert=\frac{1}{2}\int_{(0,2\pi)}(h^{2}-h'^{2}), \quad 
R_K=h+h''
\end{equation}
where $h''$ has to be understood in the sense of distributions.

Let $\mathcal{T}$ be the set of triangles with unit inradius enclosing $K$. In this section, we will investigate the optimization problem
\begin{equation}\label{SOP:triangle}
\sup_{T\in \mathcal{T}}\sup_{\substack{K\in \mathcal{K}_{r,D}^2\\ K\subset T}}\vert K \vert,
\end{equation}
which can be recast in terms of support functions as
\begin{equation}\label{SOPh}\tag{$\mathcal{P}_h$}
\boxed{\sup_{h\in \mathcal{H}}\frac{1}{2}\int_{(0,2\pi)}(h^{2}-h'^{2})}
\end{equation}
with 
$$\mathcal{H}=\{h \in H^{1}(0,2\pi), h+h''\geq 0\text{ in }\mathcal{D}'(\mathbb{T}), \ \exists T\in \mathcal T \mid  1\leq h\leq h_{T}, \sup_{\theta\in \mathbb{T}} h(\theta)+h(\theta+\pi)\leq D\},$$
where $h_{T}$ is its support function of $T$. Note that $h+h''$  is a positive Radon measure. It is essential to ensure that $h$ is the support function of a convex set. The condition $1\leq h\leq h_{T}$ simply means that $K$, whose support function is $h$, contains the disk $B(0,1)$ and is included in the triangle $T$. 	

Before stating the main result of this section, let us introduce another particular smoothed nonagon, denoted $K_C(D)$.

\begin{definition}[The smoothed nonagon $K_C(D)$]\label{defKC}
Let $D\in]2,2\sqrt{3}[$. We denote by $K_C(D)$ the convex set enclosed in 
an isosceles triangle $\Delta_I$ of inradius $1$ and made of segments and 
arcs of circle of diameter $D$ in
the following way (see Fig.~\ref{fig:McoteOpt}): 
the normal angles to the sides of $\Delta_I$ are
$$
 \eta_{1}=-\pi/2, \quad \eta_2=\arcsin (\tau/2-1)\quad \text{and}\quad {\eta_3=\pi-\eta_2}, 
$$
where {$\tau$ is the unique root in $[2,3]$} of the equation
$$
-\tau^{3}+\left(D^{2}/2+5\right)\tau^{2}-\left(2D^{2}+4\right)\tau+D^{2}=0.
$$

Let us introduce the points $A_i$, $B_i$, $i=1,2,3$ and $M_3$ defined through their coordinates by
$$
A_{i}=
\begin{pmatrix}
\cos\eta_i+h_{i}\sin\eta_i\\
\sin\eta_i-h_{i}\cos\eta_i
\end{pmatrix},
\quad B_{i}=
\begin{pmatrix}
\cos\eta_i-h_{i}\sin\eta_i\\
\sin\eta_i+h_{i}\cos\eta_i
\end{pmatrix},\  i=1,2,3, \quad
M_{1}=(1-\tau)\times
\begin{pmatrix}
\cos(\eta_{1})\\
\sin(\eta_{1})
\end{pmatrix}.
$$
with
$h_1=\sqrt{D^2-\tau^2}$ and $h_2=h_3=\frac{h_1}{4}(\tau-2)$.
The set $K_C(D)$ is then obtained as follows:
\begin{itemize}
\item the points $A_1$, $B_1$, $A_2$, $B_2$, $M_1$, $A_3$, $B_3$ belong to its boundary;  
\item $\arc{B_{2}M_{1}}$ (resp. $\arc{M_{1}A_3}$) and $\arc{A_{1}B_{3}}$ (resp. $\arc{B_{1}A_{2}}$) are diametrally opposed arcs of the same circle of diameter $D$. 
\item the boundary contains the segments $[A_{i}B_{i}]$, $i=1,2,3$. Note that the contact point $I_{i}$ with the incircle is precisely the middle of $[A_{i}B_{i}]$, 
\end{itemize}
Moreover, setting
$$
t_{1}=\arcsin\left(\frac{2(\sin\eta_{1}+h_1\cos\eta_{1})-\tau+2}{D}\right)\quad\text{and }\quad
t_2=\arcsin\left(\frac{\tau}{D}\right),
$$
we have the formula
\begin{equation}\label{formulaKC}
\vert K_C(D)\vert=\frac{\tau}{\tau-2}\sqrt{D^{2}-\tau^{2}}+\frac{D^{2}}{2}(t_2-t_1).
\end{equation}

\begin{figure}[H]
\begin{center}
\includegraphics[width=6cm]{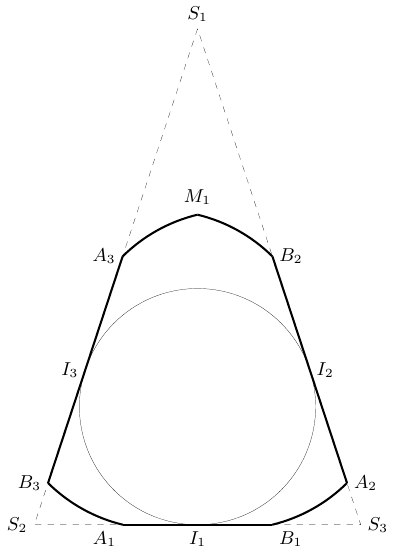}
\caption{The set $K_C(D)$ and its incircle. \label{fig:McoteOpt}}
\end{center}
\end{figure}

\end{definition}

\begin{proposition}\label{prop:KcKe}
Let $D>2$ be given and assume that Problem \eqref{SOP:triangle} has a solution $K^\star$. Then, $K^\star$ is either the set $K_C(D)$ or $K_E(D)$.
\end{proposition}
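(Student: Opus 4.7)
The plan is to combine the support-function formulation \eqref{DiamSupport} with the regularity and diametrality results of Lemmas~\ref{lem1} and \ref{lem2}, so as to completely describe the boundary of any optimizer $K^\star$ and enumerate the admissible configurations. Throughout, let $h = h_{K^\star}$.

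The first step is to identify the geometric nature of the free boundary. Lemma~\ref{lem1} implies that on each free arc every point is diametral, so on the corresponding open interval of normal directions one has $h(\theta) + h(\theta+\pi) = D$ and the diametral counterpart of $M(\theta)$ is $M(\theta+\pi)$. A first-order variational argument, obtained by taking bump perturbations of $h$ supported in a free arc and using $|K| = \frac{1}{2}\int(h^2 - (h')^2)$ together with $R_K = h+h''$, shows that the radius of curvature must be constant on each free arc; the geometric identity $M(\theta) - M(\theta+\pi) = D u_\theta$ (which follows from the saturation of the diameter constraint) then forces this constant to equal $D/2$. Hence $h$ satisfies $h + h'' = D/2$ on each free arc, so every free arc is an arc of a circle of diameter $D$.

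The second step is antipodal pairing. If $\gamma$ is a free arc lying on a circle $\mathcal{C}$ of diameter $D$, the family of counterparts $\{M(\theta+\pi)\}$ lies on the very same circle $\mathcal{C}$ and consists entirely of diametral boundary points. Since such points cannot belong to the relative interior of a contact segment (Lemma~\ref{lem1}), they must themselves form a free arc, antipodal to $\gamma$ along $\mathcal{C}$. Consequently free arcs come in antipodal pairs supported by common circles of diameter $D$, and the total number of free arcs is even.

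The third step is to enumerate configurations and match them with $K_E(D)$ and $K_C(D)$. By Lemma~\ref{lem2}, $K^\star$ bears at most three contact segments, borne by the three sides of a unit-inradius circumscribed triangle $T^\star$. At each vertex of $T^\star$ the free zone consists either of a single free arc, or of two free arcs joined at a tangency point with the incircle, or (degenerately) reduces to a vertex of $T^\star$ lying on $\partial K^\star$. Eliminating the degenerate sub-cases by localized perturbation arguments and using parity of the arc count, the only two non-degenerate possibilities are: all three corners carrying two arcs (six arcs in three antipodal pairs), or one corner carrying two arcs while the other two each carry one arc (four arcs in two antipodal pairs). In the first possibility, the three antipodal pairs and the three incircle tangencies impose a rigid cyclic structure, so the three arc-circles are related by a $2\pi/3$ rotation about the incenter; $T^\star$ is therefore equilateral and one recovers exactly $K_E(D)$. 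In the second possibility, an analogous matching argument imposes a reflection symmetry across the axis joining the two-arc corner to the midpoint of the opposite side, so $T^\star$ is isosceles; the remaining free parameter $\tau$ is then fixed by a first-order optimality condition within this one-dimensional family, and reduces precisely to the cubic $-\tau^3 + (D^2/2+5)\tau^2 - (2D^2+4)\tau + D^2 = 0$ of Definition~\ref{defKC}, yielding $K_C(D)$.

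The main obstacle will be the rigorous justification of the symmetry conclusions without assuming uniqueness of the optimizer a priori: one must check that the antipodal pairing and the three (respectively two) incircle tangencies leave no room for an asymmetric optimizer in either configuration, and that the degenerate sub-cases really can be excluded by local perturbations. A subsidiary difficulty is obtaining the cubic for $\tau$ in a transparent manner, by writing the area of the isosceles configuration as an explicit function of $\tau$ and differentiating subject to the geometric constraints.
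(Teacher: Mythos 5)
Your skeleton matches the paper's (diametrality forces $h(\theta)+h(\theta+\pi)=D$ on the free arcs, then a curvature analysis, antipodal pairing, and an enumeration of configurations), but two of your steps would fail as stated. First, the identity $M(\theta)-M(\theta+\pi)=Du_\theta$ only yields $R(\theta)+R(\theta+\pi)=D$: an arc of curvature radius $c$ paired with the concentric antipodal arc of radius $D-c$ is compatible with diametrality and convexity for every $c\in(0,D)$, so the geometric identity does not "force the constant to equal $D/2$". That value is itself a variational fact: one needs the antipodal perturbation $\xi=\mathds{1}_{\omega_1}-\mathds{1}_{\omega_1+\pi}$ (the only bumps that preserve the saturated diameter constraint to first order) to get $\int_{\omega_1}(2R-D)=0$, and then separate arguments — Lemma~\ref{lem7}'s non-smooth perturbation with an $O(\varepsilon\sqrt{\varepsilon})$ gain against an $O(\varepsilon^2)$ loss, and the Lagrange-multiplier/switching-function analysis of Proposition~\ref{lem8} — to exclude the bang-bang values $R=0$ and $R=D$ on sets of positive measure. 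Your "bump perturbations of $h$ supported in a free arc" are not admissible in general, because they violate either $R\geq 0$ or the diameter constraint; this is precisely why the set $\{R\in\{0,D\}\}$ must be handled by hand.

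Second, the configuration with a vertex of $T^\star$ on $\partial K^\star$ (two free zones) is not a degeneracy removable by "localized perturbation arguments": it satisfies every local optimality condition you list (arcs of diameter $D$ in antipodal pairs, midpoint tangencies), and the paper excludes it only globally, by writing the closure condition $A_1A_2=D$ and showing the resulting cubic $X^3-\frac{D^2-1}{4}X^2-\frac12 X+\frac14$ has no root in $[1/2,1]$. Likewise, in the four-arc case there is no one-parameter family left to optimize over: once the free arcs are antipodal pairs on circles of diameter $D$ and the contact points are the midpoints of the flat parts (a fact you use implicitly but never establish — the paper's Lemma~\ref{lem5}, proved by rotating a contact line and balancing the gained and lost triangles), the configuration is rigid, and the cubic for $\tau$ comes from the closure condition that $A_2M_3A_1B_3$ is a rectangle, not from a first-order optimality condition in $\tau$. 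A smaller but telling slip: the two arcs of a free zone meet at the angular point $M_i$, which lies at distance $\tau-1>1$ from the incenter, not at a tangency point with the incircle; the tangency points sit in the middle of the flat parts.
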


The end of this section is devoted to proving Proposition~\ref{prop:KcKe}. Hence, let us assume that Problem \eqref{SOP:triangle} has a solution 
denoted $K$ (instead of $K^\star$) for the sake of simplicity. Let $T$ be 
the triangle of inradius 1 containing $K$. Let $D_{\eta_{i}}$ be the three tangent lines to the unit circle defining $T$, where $\eta_{i}$ is the angle between {the horizontal axis} and the normal vector to each side of $T$. We assume that 
 $\eta_{1}<\eta_{2}<\eta_{3}$ and we introduce the contact points $I_{i}$ 
between the line$D_{\eta_{i}}$ and the unit circle. 
We also define $\varphi_{1}$, $\varphi_{2}$, $\varphi_{3}$ as the demi angles at the center (see Fig.~\ref{fig:triangle}). The problem being rotationally invariant, we will impose without loss of generality that $\eta_{1}=-\pi/2$, and $\varphi_{1}\leq \varphi_{2}\leq \varphi_{3}$. Identifying the index $i$ with the index $i+3$, one has
$$
\varphi_{i}=\frac{\eta_{i+2}-\eta_{i+1}}{2}, \quad i=1,2,3.
$$   The set $K\cap D_{\eta_{i}}$ is a segment (possibly reduced to the point $I_{i}$) denoted $[A_{i},B_{i}]$. The free boundary $\gamma$ being strictly convex according to Lemma~\ref{lem2}, we parametrize it with the help of a function $\theta \mapsto M(\theta)$ defined on $I_{\gamma}$= $(0,2\pi)\backslash\{\eta_{i}\}_{i=1,2,3}$, where $\theta$ is the angle 
between the normal to the support line of the point $M(\theta)$ and the abscissa axis. A point $M$ of the free boundary may have several support lines. More precisely, two cases may arise: either a point has a unique supporting line
 or a point has at least two supporting lines.

Each point $M$ of the second kind is a kind of vertex of $K$ called ``angular point'' of $\partial K$. Moreover, considering the smallest and the largest angle made by its supporting lines, one can associate to $M$ a closed interval $J_{M}\subset I_{\gamma}$.
Notice that two consecutive vertices $M$ and $N$ cannot admit
overlapping intervals $J_{M}$ and $J_{N}$ since it would mean that $\gamma$ contains a  violating
the property that every point in $\gamma$ saturates the diameter constraint. It also implies that angular points of $\gamma$ are isolated, whereas 
points of $\partial K$ of the first kind are represented by a unique angle.

This remark rewrites in the following way in terms of the support function $h$ of $K$:
\begin{itemize}
\item[(i)] if $M(\theta)$ has a unique supporting line, then $\theta+\pi\in I_{\gamma}$ and $h(\theta)+h(\theta+\pi)=D$;
\item[(ii)] in the converse case, there exists $\theta \in J_{M}$ such that $\theta+\pi\in I_{\gamma}$ and $h(\theta)+h(\theta+\pi)=D$.
\end{itemize}

\begin{figure}[H]
\begin{center}
\includegraphics[width=12cm]{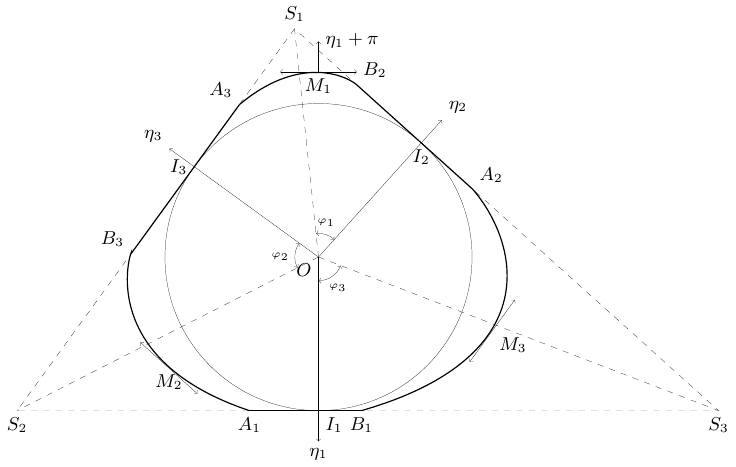}
\caption{Example of a convex $K$ and the triangle $T(K)$.\label{fig:triangle}}
\end{center}
\end{figure}

Regarding the segments $[A_i,B_i]_{i=1,2,3}$, one has
$$
A_{i}=M(\eta_{i}^{-})=\lim_{\theta\to \eta_{i},\theta<\eta_{i}}M(\theta)\quad \text{ and }\quad 
B_{i}=M(\eta_{i}^{+})=\lim_{\theta\to \eta_{i},\theta>\eta_{i}}M(\theta).
$$
For $i=1,2,3$, let $\alpha_{i}$ and $\beta_{i}$ be such that $M(\theta)=A_{i}$ for all $\theta \in [\eta_{i}-\alpha_{i},\eta_{i})$ and $M(\theta)=B_{i}$ for all $\theta \in (\eta_{i},\eta_{i}+\beta_{i}]$. Since angular points are isolated, the free boundary $\gamma$ near $A_{i}$ and $B_{i}$ is made of points of $\partial K$ having a unique supporting line. An easy continuity argument shows that $A_{i}$ and $B_{i}$ saturate the diameter constraint. 
Let us make their diametral point(s) precise. 
Recall that we introduced $F_{\theta}$ as $D_{\theta}\cap K$ and let us characterize $F_{\eta_{i}+\pi}$. Since $\eta_{i+1}-\eta_{i}<\pi$, $\eta_{i}+\pi$ cannot belong to $\{\eta_{j}\}_{j=1,2,3}$, then $F_{\eta_{i}+\pi}$ is a point denoted $M(\eta_{i}+\pi)$ or more simply $M_i$. Considering 
for instance the point $M_{1}$, we have to distinguish between three cases:  
 \begin{itemize}
 \item if $\eta_{1}+\pi \in (\eta_{2}+\beta_{2},\eta_{3}-\alpha_{3})$, meaning that $M_{1}$ lies in the interior of the free boundary, then $M_{1}$ is diametral with both $A_{1}$ and $B_{1}$
 \item if $\eta_{1}+\pi \in (\eta_{2},\eta_{2}+\beta_{2})$, then $M_{1}=B_{2}$ and one easily infers that $M_{1}A_{1}=D$
 \item if $\eta_{1}+\pi \in (\eta_{3}-\alpha_{3},\eta_{3})$, then $M_{1}=A_{3}$ and it follows that $M_{1}B_{1}=D$
 \end{itemize}

\subsubsection{Geometrical description of optimizers}
\begin{lemma}\label{lem5}
Let $i\in \llbracket 1,3\rrbracket$. The contact points $I_{i}$ between the line $D_{\eta_{i}}$ and the incircle is the middle of the  segment $[A_{i},B_{i}]$.
\end{lemma}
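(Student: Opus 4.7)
My plan is a reflection/Minkowski-averaging argument. Fix the index $i$, let $\ell_i$ be the line through the incenter $O$ and the contact point $I_i$ (perpendicular to $D_{\eta_i}$), and let $\sigma_i$ denote the reflection across $\ell_i$; set $K':=\sigma_i(K)$. Since $O\in\ell_i$, the unit incircle is invariant under $\sigma_i$, so $K'\supset B(O,1)$; also $|K'|=|K|$ and $D(K')=D$. The line $D_{\eta_i}$ is itself invariant under $\sigma_i$, so it remains a support line of $K'$, and the flat portion of $K'$ on it is the mirror of $[A_i,B_i]$ about $I_i$.

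Form the Minkowski average $L:=\tfrac12(K+K')$. Standard properties of Minkowski sums of convex bodies yield $L\supset B(O,1)$, $D(L)\le D$, and $|L|\ge|K|$ by the Brunn--Minkowski inequality, with equality if and only if $K$ and $K'$ differ by a translation; moreover, a direct computation in coordinates on $D_{\eta_i}$ shows that the flat portion of $L$ on the invariant line $D_{\eta_i}$ is the segment of length $a_i+b_i$ centered at $I_i$. Suppose by contradiction that $a_i\ne b_i$, so $K\ne K'$. If $K'=K+v$ for some $v\in\R^2$, applying $\sigma_i$ to this identity (linear once $O$ is taken as origin) gives $\sigma_i(v)=-v$; but then $B(O,1)$ and $B(v,1)$ would both be inscribed balls of maximal radius in $K'$, contradicting the uniqueness of the incircle in Case 2. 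Hence the Brunn--Minkowski inequality is strict and $|L|>|K|$.

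It remains to turn this strict inequality into a contradiction with the optimality of $K$ by exhibiting an admissible competitor of strictly greater area. When the enclosing triangle $T$ is itself invariant under $\sigma_i$ (as is the case for the equilateral triangle associated with $K_E$, or for the symmetric side of the isosceles triangle associated with $K_C$), the inclusion $K'\subset T$ yields $L\subset T$, hence $r(L)\le r(T)=1$ and therefore $r(L)=1$; the equality $D(L)=D$ then follows by averaging a diameter of $K$ parallel to $\ell_i$, whose existence can be extracted from the classification of the diametral partners of $A_i$ and $B_i$ recalled right before the lemma.

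In the general case where $\sigma_i(T)\ne T$, the main obstacle is to ensure that $L$ remains admissible, since the averaged triangle $\tfrac12(T+\sigma_i(T))$ may have inradius strictly greater than $1$. I would handle it by first Steiner-symmetrizing $K$ across $\ell_i$, an operation that preserves area, keeps $B(O,1)$ inscribed, and does not increase the diameter, so that the resulting $\sigma_i$-symmetric body is still a competitor for the constrained problem; a small rescaling and a repetition of the Brunn--Minkowski step applied to this symmetric competitor then yield the same strict improvement and close the argument.
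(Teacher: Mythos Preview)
The proposal has a genuine gap. Your reflection + Brunn--Minkowski argument is clean when the enclosing triangle $T$ happens to be $\sigma_i$-invariant, but at this stage of the proof the triangle $T$ is completely unknown: its shape is only pinned down \emph{after} Lemma~\ref{lem5} (indeed, Lemma~\ref{lem5} is one of the ingredients used to determine $T$). So you cannot assume this symmetry. In the general case the averaged body $L=\tfrac12(K+\sigma_i(K))$ is only guaranteed to contain $B(O,1)$ and to have $D_{\eta_i}$ as a support line tangent to $B(O,1)$ at $I_i$; the other two contact lines of $K$ are destroyed by the reflection, and nothing prevents $r(L)>1$ (the inscribed ball of $L$ can slide along $\ell_i$ away from $I_i$). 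Thus $L$ need not be admissible, and the strict inequality $|L|>|K|$ yields no contradiction.

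Your proposed fix via Steiner symmetrization does not close this gap. Steiner symmetrization across $\ell_i$ does not preserve the inradius (it can only increase it), so $S(K)$ has the same area as $K$ but possibly $r(S(K))>1$: exactly the same admissibility problem. And even if $S(K)$ were admissible, it is already $\sigma_i$-symmetric, so the ``repetition of the Brunn--Minkowski step'' is vacuous---reflecting and averaging returns $S(K)$ itself, with no strict gain. The two tools you invoke both founder on the same obstacle (loss of the inradius constraint) and cannot rescue one another.

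By contrast, the paper's proof is a local first-variation argument: it rotates the single contact line $D_{\eta_i}$ by a small angle $\varepsilon$, keeping it tangent to the incircle, and computes the resulting area balance as $\tfrac{\varepsilon}{2}(l_A^2-l_B^2)+o(\varepsilon)$. Because the perturbation is local and leaves the incircle and the other two contact lines fixed, the inradius constraint is preserved by construction, and optimality forces $l_A=l_B$. This completely sidesteps the global admissibility issue that blocks your approach.
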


\begin{proof}
To prove this, we will use a small perturbation of an angle $\eta_{i}$ and get optimality conditions. {Without loss of generality,}
 consider $I_{1}$ and introduce the lengths $l_{A}=I_1A_{1}$ and $l_{B}=I_1B_{1}$. 
{Let us consider the following perturbation: we replace $\eta_{1}$ by $\eta_{1}+\varepsilon$ for $\varepsilon>0$ small, and denote by $T_\eps$ the triangle whose incircle is $B(0,1)$, and whose angles are $\eta_{1}+\eps$, $\eta_2$, and $\eta_3$.} We denote by $L_{\eta_1+\varepsilon}$ the corresponding tangent line of the unit disk.
We now define $J_{\varepsilon}$ as the intersection point  between $D_{\eta_{1}}$ and $
L_{\eta_1+\varepsilon}$. This point satisfies 
$J_\varepsilon=I_1+ \frac{\varepsilon}{2} (-\sin\eta_1,\cos\eta_1)$.
  We build a new convex set included in the triangle $T_{\varepsilon}$ by 
slightly modifying the previous one : {replace $A_1$ and $B_1$ by $A_{\varepsilon}$ and $B_{\varepsilon}$ located on $L_{\eta_1+\varepsilon}$ 
in such a way that the diameter constraint is still fulfilled
(see Fig.~\ref{fig:mil1}).} We explicit the construction of $A_\varepsilon$ below as the intersection of 
$L_{\eta_1+\varepsilon}$ with a well chosen line issued from $A_1$, while 
$B_\varepsilon$ is the intersection of $L_{\eta_1+\varepsilon}$
with the boundary of $K$.
We have to make the balance between 
\begin{itemize}
\item the area we gain: this is triangle $T(A_1J_{\varepsilon}A_\varepsilon)$
\item the area we lose: this is the intersection of $K$ with the half-space $\{ x \cdot u_{\eta_{1}+\varepsilon}\geq 1\}$. At first order, this area is the same than the area of the 
triangle $T(B_1J_{\varepsilon}B_\varepsilon)$
\end{itemize} 

\begin{figure}[H]
\begin{center}
\includegraphics[width=8cm]{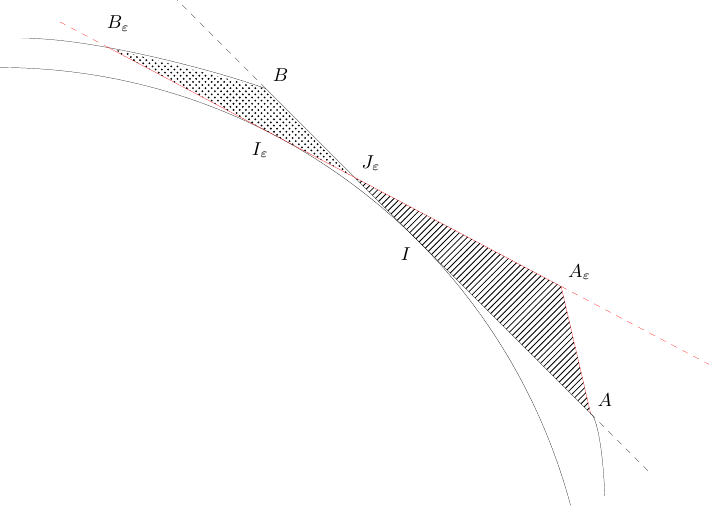}
\caption{Gain of area (strips) vs loss of area (dots)}\label{fig:mil1}
\end{center}
\end{figure}

The two triangles share the same angle $\varepsilon$, therefore the balance of area is
$$\delta A:=\frac{1}{2} \sin\varepsilon \left(J_\varepsilon A_1.J_\varepsilon A_\varepsilon
- J_\varepsilon B_1.J_\varepsilon B_\varepsilon\right) $$
Now we can explicitly compute these lengths and get the expansions
$$
J_\varepsilon A_1=l_A +O(\varepsilon),\qquad J_\varepsilon B_1=l_B +O(\varepsilon), $$
Let us introduce the angle $\theta_A^\varepsilon=\widehat{J_\varepsilon 
A_1 A_\varepsilon}$.
Using elementary trigonometry, we can rewrite the length $J_\varepsilon A_\varepsilon$ as
$$J_\varepsilon A_\varepsilon=\frac{A_1J_\varepsilon}{\cos \varepsilon+\sin \varepsilon \cot \theta_A^\varepsilon}=l_A(1-\varepsilon \cot \theta_A^\varepsilon +o(\varepsilon)).$$
 
Now let us prove that we can choose an angle $\theta_A^\varepsilon$ which 
does not go to zero
while keeping the diameter constraint satisfied. Suppose $\eta_1\in [0,\pi/2]$. 
Recall that $A_1$ is represented by an interval of angles $I_{A_1}=[\eta_1-\alpha,\eta_1]$.
Let $D_{A_1}$ be the set of points that are diametrical to $A_1$ and $\Theta_{A_1}\subset I_{A_1}+\pi$ the set of angles representing elements of $D_{A_1}$:  $$\Theta_{A_1}=\{\theta \in I_\gamma, M(\theta)\in D_{A_1} \mbox{ and } h(\theta)+h(\theta+\pi)=D\}\subset[0,2\pi).$$

We claim that there exists $\gamma>0$ such that for all $\theta'\in [\eta_{1}+\pi-\gamma,\eta_{1}+\pi]$, $\theta'\notin   \Theta_{A}$. Otherwise the diameter constraint on $I_{1}$ would be broken. Let $\zeta=\max(\Theta_{A_1})<\pi+\eta_1$. Choosing $\theta_A^\varepsilon= (\pi+\eta_1-\zeta)/2$ fulfills the desired condition for $\varepsilon$ small enough and provides
a gain of area as $l_A^2 \varepsilon/2+o(\varepsilon))$.

neighborhood of $A$ on this flat portion does not saturate the diameter constraint. 
one can prove that at the first order in $\varepsilon$, it consists in taking the line with vector $\zeta+\pi$. Take $A_{\varepsilon}$ as the intersection of this line with the tangent of the unit circle with angle $\eta_{1}+\varepsilon$. The desired angle is $\theta=\eta_{1}-\zeta-\pi=O(1).$ This construction is such that $K\cup T(A J_{\varepsilon}A_{\varepsilon})$ still fulfills the diameter constraint as well as the convexity constraint. since $\delta-\pi\geq\eta_1-\alpha$

On the side of $B$ there is no problem with the diameter constraint, thus 
we simply 
observe that $J_\varepsilon B_\varepsilon=l_B+O(\varepsilon)$ by construction.
Therefore we get a loss of area as $l_B^2 \varepsilon/2+o(\varepsilon))$.

Thus we infer that the difference of areas is equal to $\delta A= \frac{\varepsilon}{2} (l_{A}^{2}-l_{B}^{2})+o(\varepsilon)$ which has to be non-positive, which leads to $l_{A}\leq l_{B}$ at the optimum. We repeat the argument with $\varepsilon<0$ to get $l_{B}\leq l_{A}$, whence the equality.
%
%
%
%
%

%
%

%
%
\end{proof}

Now we are going to prove that the free boundary is made of arcs of circle of radius $D/2$ by working on the radius of curvature $R$. {It consists of three steps.} We show first that this radius can only take the values $0$, $D/2$ or $D$ on the free boundary. Then we prove that the set $\{R=D\}$ is necessarily of empty interior to finally deduce that the radius of curvature on non angular points can only be $D/2$.

\begin{lemma}\label{lem6}
On the free boundary $\gamma$ of $K$, the radius of curvature is almost everywhere equal to either $0$, $D/2$ or $D$.
\end{lemma}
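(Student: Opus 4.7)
The plan is to combine a simple geometric rigidity argument with a one-parameter variational analysis based on the first-order optimality conditions for Problem~$(\mathcal{P}_h)$.

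\textbf{Setting up the cases.} By Lemma~\ref{lem1}, every point of the free boundary $\gamma$ is diametral, so for every $\theta$ corresponding to a non-angular boundary point $M(\theta)\in\gamma$ the diameter constraint is saturated: $h(\theta)+h(\theta+\pi)=D$. The diametral partner $M(\theta+\pi)$ then falls into one of two non-degenerate cases: (A) $\theta+\pi$ also corresponds to a non-angular point of $\gamma$, or (B) $\theta+\pi$ lies in a maximal interval on which $M(\theta+\pi)$ equals a fixed angular point $A_j$ or $B_j$, namely $\theta+\pi\in[\eta_j-\alpha_j,\eta_j)$ or $(\eta_j,\eta_j+\beta_j]$. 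The remaining values $\theta+\pi\in\{\eta_1,\eta_2,\eta_3\}$ form a finite set and contribute nothing to an a.e.\ statement.

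\textbf{Case (B): geometric rigidity gives $R=D$.} When $M(\theta+\pi)=A_j$ is fixed for $\theta$ in an open interval $J$, the equality $\|M(\theta)-A_j\|=D$ forces $M(\theta)$ to trace an arc of the circle of radius $D$ centered at $A_j$. The radius of curvature of such an arc is $D$, hence $R(\theta)=D$ on $J$.

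\textbf{Case (A): variational argument gives $R=D/2$.} On any case-(A) zone $h$ is smooth; differentiating the saturated identity $h(\theta)+h(\theta+\pi)=D$ twice yields $R(\theta)+R(\theta+\pi)=D$. Pick a sub-interval $I$ inside such a zone on which $R$ stays bounded away from $0$ (possible because $R$ is continuous and positive on open regular arcs). For $\psi\in\mathcal{C}^\infty_c(I)$ define a perturbation $\delta h_\varepsilon$ supported in $I\cup(I+\pi)$ by setting $\delta h_\varepsilon(\theta)=\varepsilon\psi(\theta)$ for $\theta\in I$, $\delta h_\varepsilon(\theta+\pi)=-\varepsilon\psi(\theta)$ for $\theta\in I$, and $\delta h_\varepsilon\equiv 0$ elsewhere. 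For $|\varepsilon|$ small, $h+\delta h_\varepsilon$ stays admissible: convexity $(R+\delta R\ge 0)$ is preserved because $R>0$ on the support; the constraints $h\ge 1$ and $h\le h_T$ remain strictly inactive on the free boundary; and the diameter constraint is preserved exactly since $\delta h_\varepsilon(\theta)+\delta h_\varepsilon(\theta+\pi)\equiv 0$ for all $\theta$. Using the self-adjointness of $1+\partial^2$ under periodic boundary conditions, the first-order area variation is
\[ \delta A \;=\; \int_0^{2\pi} R\,\delta h_\varepsilon\, d\theta \;=\; \varepsilon\int_I \bigl(R(\theta)-R(\theta+\pi)\bigr)\,\psi(\theta)\, d\theta. \]
Optimality with $\varepsilon$ of either sign forces $R(\theta)=R(\theta+\pi)$ on $I$; combined with $R(\theta)+R(\theta+\pi)=D$ this yields $R=D/2$ on $I$.

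\textbf{Conclusion and main obstacle.} Angular points of $\gamma$ are isolated (otherwise two consecutive ones would be joined by a segment, contradicting the strict convexity of the free boundary), hence of measure zero; the radius of curvature is conventionally assigned the value $0$ at such points (infinite pointwise curvature). Combining cases (A) and (B) therefore gives $R\in\{0,D/2,D\}$ a.e.\ on $\gamma$. The main technical point is the admissibility bookkeeping in case (A): the sub-interval $I$ has to be chosen inside a single open zone on which the case type is constant and $R$ is bounded away from $0$ on $\overline{I}\cup\overline{I+\pi}$, so that the convexity of the perturbed support function is guaranteed for $|\varepsilon|$ small; once this is arranged, the other constraints pose no difficulty because $\gamma$ lies strictly inside the triangle $T$ and strictly outside the incircle.
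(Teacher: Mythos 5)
Your overall strategy (perturb the support function, use the first variation $\delta A=\int R\,\delta h$ together with the antipodal relation $R(\theta)+R(\theta+\pi)=D$) is the same as the paper's, and your case (B) rigidity argument is fine. But there is a genuine gap in case (A), precisely at the point you flag as "the main technical point": you select a sub-interval $I$ on which $R$ is bounded away from $0$ (and, implicitly, away from $D$, since admissibility of the perturbation on $I+\pi$ requires $R(\cdot+\pi)=D-R(\cdot)\geq c$ there), and you justify its existence by asserting that "$R$ is continuous and positive on open regular arcs". At this stage of the argument nothing of the sort is known: $R=h+h''$ is a priori only a nonnegative Radon measure, upgraded to an $L^\infty$ function with $0\leq R\leq D$ by the identity $R+\tau_\pi R=D$, and it may perfectly well take the values $0$ and $D$ on sets of positive measure that are dense in a case-(A) zone (absence of segments only excludes atoms of $R$; absence of corners only excludes intervals where $R\equiv 0$). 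In that situation no interval $I$ with $c\leq R\leq D-c$ exists, and you cannot shrink the support of $\psi$ to the measurable set $\{c\leq R\leq D-c\}$ because $\psi+\psi''$ would spill onto $\{R=0\}\cup\{R=D\}$ and destroy convexity. This is exactly why the paper perturbs with (regularized) indicator functions of arbitrary measurable subsets of $\omega=\{0<R<D\}$ and of its antipode, first showing $R$ is constant on $\omega$ and then that the constant is $D/2$, while leaving open the possibility that $R\in\{0,D\}$ on positive-measure sets --- which is why Lemma~6 only asserts the trichotomy $R\in\{0,D/2,D\}$ and defers the elimination of the values $0$ and $D$ to Lemma~7 and Proposition~8 (the switching-function argument). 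Note that your case-(A) conclusion, if it held, would give $R=D/2$ a.e.\ outright on mutually antipodal zones, i.e.\ strictly more than the lemma claims; that overshoot is a symptom of the missing step.

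Two smaller points: the diametral partner in case (B) could be any angular point of $\gamma$, not only an $A_j$ or $B_j$ (the interior angular points $M_i$ are only identified later, in Lemma~9), though the same rigidity applies; and your remark that angular points "are isolated, hence of measure zero" conflates arclength with the angle parametrization --- each angular point occupies an interval $J_M$ of positive measure in $\theta$, on which $R=0$, so they contribute the value $0$ to the trichotomy rather than being negligible.
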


\begin{proof}
According to the above discussion, we will distinguish between points of the free boundary $\gamma$ having a unique support line, and angular points.
Since angular points are isolated on $\partial K$, it means that points of $\gamma$  having a unique support line define an open subset $\gamma_1$ 
of $\gamma$ or equivalently that their angle parametrization define an open subset $I_1$ of $I_{\gamma}$= $(0,2\pi)\backslash\{\eta_{i}\}_{i=1,2,3}$. 
Any point of the complement set of $\gamma_1$ is an angular point, and therefore its radius of curvature is zero. Thus, it remains to look at points of $\gamma_1$.

Recall that, since $K$ is a convex set, its radius of curvature defines a 
nonnegative Radon measure.
For any $\theta\in I_1$ one has
$h(\theta)+h(\theta+\pi)=D$.
Differentiating twice this equality and since $R=h+h''$, one gets that $R+\tau_\pi R=D$ in the sense of measures in $I_1$, where $\tau_\pi$ is 
the translation operator given by $\tau_\pi (f)= f(\pi+\cdot)$ for every continuous function $f$. It follows that  $0\leq  R(\theta) \leq D$ for 
a.e. $\theta$ in $\mathbb{T}$ and thus, $R$ is a bounded function, allowing us to write
\begin{equation}\label{lem6-2}
\forall \theta \in I_1,\quad R(\theta)+ R(\theta+\pi)=D.
\end{equation}

Let us now prove that for almost every $\theta\in I_1$, one has $R(\theta)\in \{0,D/2,D\}$.
Let us assume that the set $\omega=\{\theta\in I_1\mid  0<R(\theta)<D\}$ has a positive measure, otherwise it means that $R=0$ or $R=D$ a.e. 
and we are done. Let us first show that $R$ is necessarily constant on $\omega$.
Let us argue by contradiction: assume there exist two subsets $\omega_1$ and $\omega_2$ such that $|\omega_1|=|\omega_2|>0$ and 
\begin{equation}\label{lem6-3}
\int_{\omega_1} R(\theta) \, d\theta >  \int_{\omega_2} R(\theta) \, d\theta.
\end{equation}
Let us consider a regularization $\xi$ of the function $ v$ defined by
$$
v(\theta)=\left\lbrace \begin{array}{llrl}
+1 &\mbox{ if } \theta\in\omega_1, & - 1 &\mbox{ if } \theta\in\omega_1 +\pi \\
-1 &\mbox{ if } \theta\in\omega_2 ,&  1 & \mbox{ if } \theta\in\omega_2 +\pi \\
\end{array}\right.$$
and we will deal with the perturbation $h+\varepsilon v$ of the support function $h$ for $\varepsilon >0$ small.
In what follows, we should deal with the regularization $\xi$, work on a subset of $\omega$ on which $0<\eta\leq h(\theta)$, and finally pass to the limit $\eta \searrow 0$. To avoid technicalities, we will directly write the asymptotic of the derivative of the area under this perturbation, with a slight abuse of notation.

Since the area of the domain is 
 $$\vert K\vert =J(h) \quad \text{where } J(h)=\frac12 \int_0^{2\pi} (h^2(\theta)- {h'}^2(\theta))\, d\theta,$$ the
first derivative of the area under the perturbation above reads as
$$
\langle dJ(h),\xi\rangle= \int_{\omega_1\cup\omega_2\cup (\omega_1+\pi)\cup(\omega_2+\pi)} h\xi-h'\xi' =
\int_{\omega_1\cup\omega_2\cup (\omega_1+\pi)\cup(\omega_2+\pi)} (h+h'')\xi .$$
By definition of $\xi$, one gets
$$
\langle dJ(h),\xi\rangle=
\int_{\omega_1}R  - \int_{\omega_2}R - \int_{\omega_1+\pi}R + \int_{\omega_2+\pi}R
$$
and according to \eqref{lem6-2}, it comes
$$
\langle dJ(h),\xi\rangle=
 \int_{\omega_1}R - \int_{\omega_2}R  - \int_{\omega_1} (D-R)
+ \int_{\omega_2} (D-R) =2 \left(\int_{\omega_1}R  - \int_{\omega_2}R\right) >0
$$
leading to a contradiction. It follows that $R$ is necessarily constant on $\omega$. Let us moreover show that the constant value of $R$ is precisely $D/2$. We proceed similarly: let us choose a perturbation $\xi$ equal 
to $1$ on a subset $\omega_1$ and $-1$ on $\omega_1+\pi$. The same computation as above leads to 
$$
\langle dJ(h),\xi\rangle= \int_{\omega_1}R  - \int_{\omega_1} (D-R) =\int_{\omega_1}(2R-D) ,
$$
and we conclude since this derivative must be zero (indeed, if this derivative would not vanish, either the admissible perturbation $\xi$ or $-\xi$ would make the area increase). We conclude that necessarily $R\in \{0,D/2,D\}$ on $I_{1}$.

$\{R=D\}$ are unions of intervals and to locate them. For that purpose we will now study the perturbation on $R=h+h''$. By definition of $R$, it is a radon measure such that $\int_{(0,2\pi)}\cos dR=\int_{(0,2\pi)}\sin dR=0$ . Now Suppose that $R$ is optimal. Let $J\subset I_{1}$ and consider a perturbation $\xi$ on $J\cup J+\pi$ such that 
%
%
%
%
%
%
%
%
%
%
%
%
%
%
%
$C^{1}$ and $h''\in L^{\infty}$ with $h+h''=R$. Differentiating twice and adding yields $\psi+\psi''=R-D/2$. suppose for example that  $J_{D}=J\cap S_{D}$ has nonempty interior. Then $\psi\geq 0$ on $J_{D}$ and $\psi+\psi''=D$. Let $\psi$ evolve freely along the differential equation until $\psi$ vanishes and  goes below $0$. Then $psi$ becomes negative and 
is ruled by the differential equation $\psi+\psi''=0$

\end{proof}

From this lemma we deduce that if the boundary $\partial K$ contains an 
arc of circle of radius $D/2$, it also contains its antipodal part (in other words the set of points of $\partial K$ diametrically opposed to those of the arc of circle), and if it contains an arc of circle of radius $D$, it also contains its center. Let us show that this second case cannot occur, following an idea in \cite{BelOud}.

\begin{lemma}\label{lem7}
The two assertions are incompatible:
\begin{itemize}
\item the free boundary $\gamma$ contains an arc of circle of radius $D$;
\item its center belongs to $\partial K$.
\end{itemize}
\end{lemma}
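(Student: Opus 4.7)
The strategy is to argue by contradiction: assume that $\gamma$ contains an arc $\Gamma$ of radius $D$ centered at some $C_{0}\in\partial K^{\star}$, parameterized by outward normal angles $\theta\in(\alpha,\beta)$, and build a local deformation of $K^{\star}$ that is admissible and strictly increases the area. The starting observation identifies the antipodal piece of $\partial K^{\star}$. Since every point of the free boundary is diametral by Lemma~\ref{lem1}, the support function $h$ of $K^{\star}$ satisfies $h(\theta)+h(\theta+\pi)=D$ on $(\alpha,\beta)$; combined with $h(\theta)=C_{0}\cdot u_{\theta}+D$ on that interval (the support function of $\Gamma$), this forces $h(\theta+\pi)=C_{0}\cdot u_{\theta+\pi}$, which is precisely the support function of the single point $C_{0}$. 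The piece of $\partial K^{\star}$ whose outward normals lie in $[\alpha+\pi,\beta+\pi]$ thus collapses to $C_{0}$, which is an angular vertex with outward-normal cone exactly $[\alpha+\pi,\beta+\pi]$.

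I would next build a one-parameter family $(h_{\varepsilon})$ of admissible support functions that simultaneously shrinks $\Gamma$ and inflates $C_{0}$. With $w=(u_{\alpha}+u_{\beta})/(1+\cos(\beta-\alpha))$ and $C_{\varepsilon}=C_{0}+\varepsilon w$, set
$$
h_{\varepsilon}(\theta)=\left\{\begin{array}{ll}
C_{\varepsilon}\cdot u_{\theta}+(D-\varepsilon) & \theta\in[\alpha,\beta],\\
C_{\varepsilon}\cdot u_{\theta}+\varepsilon & \theta\in[\alpha+\pi,\beta+\pi],\\
h(\theta) & \text{otherwise.}
\end{array}\right.
$$
Geometrically this replaces $\Gamma$ by an arc of radius $D-\varepsilon$ centered at $C_{\varepsilon}$ and inflates $C_{0}$ into a small arc of radius $\varepsilon$ centered at $C_{\varepsilon}$. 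The specific choice of $w$ makes $h_{\varepsilon}$ continuous at the four junction angles, while the variation $\psi:=h_{\varepsilon}-h$ satisfies $\psi(\theta)+\psi(\theta+\pi)\equiv 0$ on the perturbed intervals, so that the diameter saturation $h_{\varepsilon}(\theta)+h_{\varepsilon}(\theta+\pi)=D$ is preserved. Moreover the strict inequalities $h>1$ on $(\alpha+\pi,\beta+\pi)$ and $h<h_{T}$ on $(\alpha,\beta)$, which hold away from a finite set of exceptional angles (tangent rays from $C_{0}$ to the incircle, tangencies of $\Gamma$ with a side of $T$), ensure that $1\le h_{\varepsilon}\le h_{T}$ for $\varepsilon$ small enough, after possibly shrinking $[\alpha,\beta]$ slightly to avoid those angles.

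A first-order computation carried out in a rotated frame in which $\Gamma$ is symmetric about the $x$-axis (i.e., $\alpha=-\Theta$, $\beta=\Theta$) yields, after the terms involving the position of $C_{0}$ cancel,
$$
|K_{\varepsilon}|-|K^{\star}|=2\varepsilon D(\tan\Theta-\Theta)+o(\varepsilon)>0,
$$
which contradicts the maximality of $K^{\star}$. The actual difficulty lies in checking the convexity of $K_{\varepsilon}$, i.e., that $h_{\varepsilon}+h_{\varepsilon}''$ is a nonnegative measure. The bulk densities on the two perturbed intervals are $D-\varepsilon$ and $\varepsilon$ respectively, both positive. At the junctions $\alpha$ and $\beta$ the jumps of $h_{\varepsilon}'$ acquire an extra $+\varepsilon\tan\Theta$ compared with those of $h'$, so the corresponding Dirac masses remain nonnegative; however, at $\alpha+\pi$ and $\beta+\pi$ the analogous jumps receive a contribution $-\varepsilon\tan\Theta$. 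Convexity of $K_{\varepsilon}$ thus requires the pre-existing Dirac masses of $h+h''$ at those antipodal junctions to be strictly positive --- a property inherited from the structure of $\partial K^{\star}$ adjacent to the vertex $C_{0}$ (for instance whenever that adjacent boundary is a flat contact segment). When this fails, one must replace the above shrink-shift deformation by a smoother one supported strictly inside $(\alpha,\beta)\cup(\alpha+\pi,\beta+\pi)$ for which no new Dirac mass is produced at the inner junctions. Handling this last case is the main technical obstacle; once overcome, the positive area gain above rules out the coexistence of $\Gamma$ and $C_{0}\in\partial K^{\star}$.
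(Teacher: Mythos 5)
Your reduction of the antipodal piece to the single vertex $C_{0}$ and your first-order area computation ($\langle R,\psi\rangle = D\int_{\alpha}^{\beta}\psi = 2\varepsilon D(\tan\Theta-\Theta)+o(\varepsilon)$) are both correct, but the proof has a genuine gap exactly where you flag it, and the escape route you propose cannot close it. Your deformation is admissible only if the curvature measure $h+h''$ of $K^{\star}$ already carries atoms of mass at least $\varepsilon\tan\Theta$ at the two angles $\alpha+\pi$ and $\beta+\pi$. Nothing forces this: if, say, the maximal arc of radius $D$ is adjacent at $\beta$ to an arc of radius $D/2$ (a configuration entirely compatible with Lemma~\ref{lem6}), then just after $\beta+\pi$ the curvature density is again $D/2$ and the normal cone of $C_{0}$ ends exactly at $\beta+\pi$ with no atom there, so $h_{\varepsilon}+h_{\varepsilon}''$ acquires a negative Dirac mass and $h_{\varepsilon}$ is not the support function of a convex body. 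The fallback ("a smoother deformation supported strictly inside, producing no Dirac mass at the inner junctions") is provably empty: if $\phi+\phi''=g\geq 0$ on an interval $(a,b)$ of length $<\pi$ with $\phi(a)=\phi'(a)=0$, then $\phi(\theta)=\int_{a}^{\theta}\sin(\theta-s)\,g(s)\,ds$, and imposing $\phi(b)=0$ with $\sin(b-s)>0$ on $(a,b)$ forces $g\equiv 0$, hence $\phi\equiv 0$. So every nontrivial perturbation that keeps $h+h''\geq 0$ on the vertex interval $\{R=0\}$ and vanishes at its endpoints must have nonzero one-sided derivatives there, i.e.\ must create atoms at the junctions --- and shrinking $[\alpha,\beta]$ only moves the junctions to interior points of the arc and of the vertex interval, where there are certainly no pre-existing atoms. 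You would also need to rule out the option of convexifying the (non-convex) perturbed set afterwards, since taking the hull can increase the diameter.

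The paper avoids all of this by not perturbing the support function at all: it sets $K_{\varepsilon}=\operatorname{hull}(K^{\star}\cup Q_{\varepsilon})\cap B(Q_{\varepsilon},D)$, where $Q_{\varepsilon}$ is the midpoint of the arc pushed outward by $\varepsilon$. Convexity and the diameter bound are automatic, the inradius is untouched because the surgery is local and away from the contact points, and the conclusion comes from a rate comparison rather than a first-order expansion: the area gained near the arc is of order $\varepsilon^{3/2}$ (tangent lengths from $Q_{\varepsilon}$ to the arc scale like $\sqrt{\varepsilon}$), while the area lost near the vertex $C_{0}=P$ is of order $\varepsilon^{2}$ because the two supporting lines at the vertex meet $\partial B(Q_{\varepsilon},D)$ transversally. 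If you want to keep a support-function argument you must either prove the required atoms exist at $\alpha+\pi$ and $\beta+\pi$, or switch to a non-infinitesimal construction of this kind.
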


\begin{proof}
Let us argue by contradiction. Let us denote by $C$ the circle of radius $D$ one arc of which belongs to $\gamma$ and by $P\in \partial K$ its center. 
Note that since $C$ saturates the diameter constraint, according to lemma 
\ref{lem1}, it belongs to the free boundary $\gamma$ or lies in the intersection of two segments. In this last case $K$ has only two free zones and C is an edge of $T$. Anyway $C$ is not in the neighborhood of any contact point. By choosing adequately an orthonormal basis, assume that the coordinates of $P$ are $(-D/2,0)$ and the coordinate of the center of the arc, denoted by $Q$, are $(D/2,0)$. Now for $\varepsilon>0$ consider $Q_{\varepsilon}$ whose coordinates are $(D/2+\varepsilon,0)$ and define 
$$K_{\varepsilon}=\operatorname{hull}(K\cup Q_{\varepsilon})\cap B(Q_{\varepsilon},D).$$
where $B(Q_{\varepsilon},D)$ is the disc of center $Q_\varepsilon$ and radius $D$.
\begin{figure}[H]
\begin{center}
\includegraphics[width=6cm]{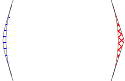}\hspace{4cm}
\includegraphics[scale=0.77]{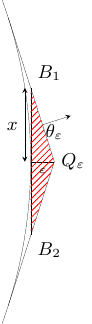}
\caption{Left: gain of area (red crosshatch) vs loss of area (blue horizontal lines). Right: calculus of the gain. \label{fig:ArcDimp}}
\end{center}
\end{figure}

Since the free boundary is modified locally, far from the contact point, the inradius remains unchanged and the diameter also by construction. This transformation drives to a gain of area on the right part, and a loss on the left part (see Fig.~\ref{fig:ArcDimp}). Let us show that the gain is $O(\varepsilon\sqrt{\varepsilon})$ and the loss is $O(\varepsilon^2)$.	
\begin{itemize}
\item {\bf gain:}
using the notations on the right part of Fig.~\ref{fig:ArcDimp}, one determine a lower bound of the area gain by computing the area of the triangle  $B_{1}Q_{\varepsilon}B_{2}$. Here $x=\varepsilon/\tan(\theta_{\varepsilon})$ with $\cos(\theta_{\varepsilon})=D/(D+\varepsilon)$, and therefore, $x=\operatorname{O}(\sqrt{\varepsilon})$, and thus, a lower bound 
on the area gain is $\operatorname{O}(\varepsilon\sqrt{\varepsilon})$.
\item {\bf loss:} note that if the radius of curvature is $D$ on an open interval, thus it is equal to $0$ on its antipodal interval. It means that the center of the corresponding arc of circle is an angular point, and hence it admits two different tangent lines. By convexity, the loss area is less than the one of the triangle formed by the point $P$, and the two 
intersection points of the tangent with the circle $C(Q_{\varepsilon},D)$. Now the angle of the tangents does not depend on $\varepsilon$, and the same kind of calculus shows that the area loss is $\operatorname{O}(\varepsilon^{2})$.
\end{itemize}
Hence, choosing $\varepsilon>0$ small enough guarantees that $|K_\varepsilon|>|K|$ and we have thus reached a contradiction.
\end{proof}

Let us complete the description of the free boundary with the help of two 
lemmas.\\

\begin{lemma}\label{lem8}
The free boundary $\gamma$ of $K$ is the union of arc of circles of diameter $D$ (i.e. the radius of curvature is equal almost everywhere to $D/2$ 
on $\gamma$), that are mutually antipodal.
\end{lemma}

\begin{proof}
either $0, D/2$ or $D$ on the free boundary, and lemma \ref{lem7} shows that on every interval $I$ where the relation $h(\theta)+h(\theta+\pi)=D$ holds, the curvature cannot be $0$ or $D$ in any subinterval. Otherwise 
we would have an arc of circle of diameter $D$, which is impossible.
As usual, we denote the optimal set by $K$ in this proof.
We will consider its radius of curvature $R$ as a variable. Recall that, globally,  $R$ is a Radon measure on $\mathbb{T}$ such that 
\begin{equation}\label{eq:orthcossin}
\langle R,\cos\rangle_{\mathcal{M}(\mathbb{T}),\mathscr{C}^0(\mathbb{T})}=0=\langle R,\sin\rangle_{\mathcal{M}(\mathbb{T}),\mathscr{C}^0(\mathbb{T})}=0
\end{equation}
(we choose here to fix the origin at the Steiner point of the convex set $K$). Its associated support function  $h$ solves the ODE
\begin{equation}\label{lem8:1}
\left\{\begin{array}{ll}
h+h''=R & \text{in }\mathbb{T}\\
\int_0^{2\pi}h(\theta)e^{i\theta}\, d\theta=0 & 
\end{array}
\right.
\end{equation}
Let $F$ be the associated resolvent operator, in other words,
$$
F:\mathcal{R}_D\ni R\mapsto F[R]=h\in H^1(\mathbb{T}),
$$
where $h$ is the unique solution to System~\eqref{lem8:1} and
$$
\mathcal{R}_D=\left\{R\in \mathcal{M}(\mathbb{T}) \mid \langle R,\cos+i\sin \rangle_{\mathcal{M}(\mathbb{T}),\mathscr{C}^0(\mathbb{T})}=0\text{ and } F[R](\theta)+F[R](\theta+\pi)\leq D, \  \theta \in \mathbb{T} \right\}.
$$
In what follows and for the sake of notational simplicity, we will denote 
the quantity $\langle R,f\rangle_{\mathcal{M}(\mathbb{T}),\mathscr{C}^0(\mathbb{T})}$, where $f$ is a continuous function in $\mathbb{T},$ by $\int_0^{2\pi}R(\theta) f(\theta)\, d\theta$  with a slight abuse.

We recall that the area of $K$ is given by
\begin{equation}\label{lem9:2}
\vert K\vert =J(R) \quad \text{where }J(R)=\int_{0}^{2\pi} F[R](\theta)R(\theta)\, d\theta.
\end{equation}

Let $R$ be the radius of curvature function of the optimal set $K$, and $h=F(R)$. Let $I$ denote a subset of $(0,\pi)$ of positive measure (assumed to contain an interval without loss of generality since angular points are isolated) on which there holds $h(\theta)+h(\theta+\pi)=D$. 
According to Lemma~\ref{lem6}, $R$ is bounded on $I$, such that $R(\theta)+R(\theta+\pi)=D$ and $R\in \{0,D/2,D\}$ a.e. on $I$. 
Moreover, according to Lemma~\ref{lem7}, the interiors of $I\cap \{R=0\}$ and $I\cap \{R=D\}$ are empty. 

We want to write the optimality conditions satisfied by $R$ locally on the interval $I$.
For that purpose we need to use admissible deformations: these are precisely deformations
$\xi$ belonging to the tangent cone at $R$, we recall this definition:
{\it the tangent cone} to the set $L^\infty(I;[0,D])$ at $R$, (also called the \textit{admissible cone}) denoted $\mathcal{T}_{R}$ is the set of functions 
$\xi\in L^\infty(I)$ such that, for any sequence of positive real numbers 
$(\eta_n)_{n\in \N}$ decreasing to $0$, there exists a sequence of functions $\xi_n\in L^\infty(I)$ converging to 
$\xi$ as $n\rightarrow +\infty$, and $R+\eta_n\xi_n\in L^\infty(I;[0,D])$ 
for every $n\in \N$.

Let us now give the first order optimality condition. This is a quite classical result in control
theory, but for sake of completeness, we postpone the proof of the following Lemma to Appendix~\ref{sec:prooflemopcond}.
\begin{lemma}\label{lemopcond}
There exist three real numbers $(\mu,\alpha,\beta)$ (Lagrange multipliers), which are not
all zero, such that
the radius of curvature $R$ of the optimal domain and its support function $h$ satisfy
\begin{equation}
\forall \xi\in \mathcal{T}_{R}, \quad \int_I\left(\mu(2h(\theta)-D)+ \alpha \cos \theta+ \beta \sin \theta\right)\xi (\theta)\, d\theta\leq 0.
\end{equation}
\end{lemma}
To finish the proof of Lemma \ref{lem8}, let us introduce the switching function 
$$
\Psi_{R}:\theta\mapsto \mu(2h(\theta)-D)+ \alpha \cos \theta+ \beta \sin \theta,
$$ 
where $h$ is the solution to \eqref{lem8:1} associated to $R$. The first order necessary condition can be recast as
$$
\forall \xi \in \mathcal{T}_{R}, \quad \int_I \Psi_{R}\xi\leq 0.
$$
Let $y_0\in I$ be a Lebesgue point of $I\cap \{R=0\}$ and let $(G_{n})_{n\in \N}$ denote a subset of $I\cap \{u^\star=0\}$ containing $y_0$. Then, $\xi=\mathds{1}_{G_n}$ belongs to $\mathcal{T}_{R}$ and therefore
$$
\int_{G_{n}} \Psi_{R}\leq 0.
$$
By dividing this inequality by $|G_{n}|$ and letting $G_{n}$ shrink to $y_0$ as $n\to +\infty$, we infer that $\Psi_{R}(y_0)\leq 0$ according to the Lebesgue density theorem.

Generalizing this reasoning to the sets $I\cap \{R=D\}$ and $I\cap \{0<R<D\}$, it follows that
\begin{itemize}
\item on $I\cap \{R=0\}$, $\Psi_{R}\leq 0$;
\item on $I\cap \{R=D\}$, $\Psi_{R}\geq 0$;
\item on $I\cap \{0<R<D\}$, $\Psi_{R}= 0$.
\end{itemize}
Note that $\Psi_{R}$ is continuous. Let us distinguish between two cases. 
If $\mu=0$, then $\Psi_{R}(\theta)= \alpha \cos \theta+ \beta \sin \theta$ with $(\alpha,\beta)\neq (0,0)$ and then, $\{\Psi_{R}=0\}$ has zero measure. It follows that $R$ is bang-bang, equal to 0 and $D$ almost everywhere in $I$. By continuity, since $I$ contains an interval, one has either $R=0$ or $R=D$ on an interval, which is in contradiction with Lemma~\ref{lem7}. 
In the same way, if $\psi_R<0$ (or $\psi_R>0$) somewhere, it will remain negative (or positive)
on an interval, implying that $R=0$
on that interval, in contradiction with Lemma~\ref{lem7}. Therefore, we deduce that $\psi_R$
is identically zero which implies that
$$
h=\frac{D}{2}+\frac{\alpha}{\mu} \cos+\frac{\beta}{\mu} \sin \quad \mbox{on } I.
$$
The same identities hold true on $I+\pi$, which corresponds to an antipodal arc of circle. The expected result follows. Notice finally that, since 
angular points are isolated (which allowed us to assume that $I$ contained an open interval), $\gamma$ is the union of arcs of circle of diameter $D$. 
\end{proof}

Another necessary point is to determine when ones switches from an arc of 
circle to another one.
\begin{lemma}\label{lem9}
Arc of circles only end at an angular point of the free boundary. Furthermore, the only angular points in the interior of the free boundary are the points $M_{i}$, $i=1,2,3$.
\end{lemma}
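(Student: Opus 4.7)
The plan is to treat the two assertions separately, leveraging the structure from Proposition~\ref{lem8}: the free boundary $\gamma$ is a union of arcs of circles of diameter $D$, each paired with its antipodal arc on the same circle.

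For the first assertion, suppose an arc $\alpha\subset\gamma$ ends at a point $P\in\partial K^\star$. If $P$ is the junction of $\gamma$ with a contact segment $[A_i,B_i]$, then the tangent of $\alpha$ at $P$ (perpendicular to its outward normal $u_{\theta_0}$) differs from that of the segment (perpendicular to $u_{\eta_i}$, with $\theta_0\neq\eta_i$), so $P$ is automatically an angular point. Otherwise $P$ lies strictly inside $\gamma$ and a further piece of free boundary must emanate from it, which by Proposition~\ref{lem8} is another arc $\alpha'$ of diameter $D$. Were $P$ not angular, $\alpha$ and $\alpha'$ would share a common tangent at $P$; since both have radius $D/2$ and convexity forces their centres to lie at distance $D/2$ on the inner side of that tangent, the two arcs would be portions of the same circle, contradicting the genuine termination of $\alpha$ at $P$. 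Hence $P$ is angular.

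For the second assertion, let $P$ be an angular point strictly inside $\gamma$, with range of support directions $J_P=[\theta_-,\theta_+]$, and let $\alpha_-,\alpha_+$ be the arcs of circles $\Gamma_-,\Gamma_+$ of diameter $D$ meeting at $P$. The angularity of $P$ forces $\Gamma_-\neq\Gamma_+$ (otherwise $\partial K^\star$ would be locally a single smooth circular arc at $P$). Denote by $\alpha_-',\alpha_+'\subset\gamma$ their antipodal arcs, supported respectively on subintervals ending at $\theta_-+\pi$ and starting at $\theta_++\pi$. The crux of the proof is to analyse what populates the antipodal range $(\theta_-+\pi,\theta_++\pi)$. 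No free-boundary arc can be supported there: its own antipodal would be supported by a subinterval of $J_P$, contradicting $M(\theta)\equiv P$ on $J_P$. Consequently this range carries only intervals on which $M$ is constant (angular points) and atoms at some of the $\eta_i$ (contact segments).

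Assume for contradiction that none of the $\eta_i$ lies in $(\theta_-+\pi,\theta_++\pi)$. Then $M$ undergoes no jump in this range, and as the support-direction intervals of distinct angular points are pairwise disjoint, $M$ must reduce to a single constant $P'$. Continuity at the endpoints $\theta_\pm+\pi$ (which are not atoms) gives $P'=P_-'=P_+'$, where $P_\pm'$ denotes the antipode of $P$ on $\Gamma_\pm$ (i.e.\ the endpoint of $\alpha_\pm'$). Then the segment $[PP']$ is simultaneously a diameter of $\Gamma_-$ and of $\Gamma_+$, so both circles share their centre (the midpoint of $[PP']$) and their common radius $D/2$; hence $\Gamma_-=\Gamma_+$, a contradiction. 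It follows that some $\eta_i$ belongs to $(\theta_-+\pi,\theta_++\pi)$, i.e.\ $\eta_i+\pi\in J_P$ modulo $2\pi$, and since $M\equiv P$ on $J_P$ one obtains $P=M(\eta_i+\pi)=M_i$. The main technical hurdle is to rigorously rule out free-boundary arcs from the antipodal range using the antipodal pairing of Proposition~\ref{lem8}; once that is settled, the remaining argument is elementary Euclidean geometry.
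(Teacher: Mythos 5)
Your proof is correct and its skeleton matches the paper's: both reduce the second assertion to showing that the antipodal interval of an interior angular point $P$ (your $(\theta_-+\pi,\theta_++\pi)$, the paper's $(\alpha+\pi,\beta+\pi)$) must contain one of the contact directions $\eta_i$, whence $\eta_i+\pi\in J_P$ and $P=M(\eta_i+\pi)=M_i$. Where you diverge is in how that interval is forced to meet $\{\eta_1,\eta_2,\eta_3\}$. The paper first notes $A\neq B$ (your $P_-'\neq P_+'$, else the two circles coincide), then argues that if every boundary point between $A$ and $B$ were diametral they would all be diametral to $P$, producing an arc of radius $D$ centred at $P\in\partial K^\star$, which Lemma~\ref{lem7} forbids; the surviving non-diametral point lies on a contact segment by Lemma~\ref{lem1}. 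You instead exclude free-boundary arcs from the antipodal range via the antipodal pairing of Proposition~\ref{lem8} (their antipodes would have to sit inside $J_P$, where $M\equiv P$ and $R=0$), then collapse what remains to a single angular point and recover $A=B$ as the contradiction. Both routes are valid; yours trades Lemma~\ref{lem7} for a heavier use of Proposition~\ref{lem8} together with the isolation of angular points (which you need to justify that the range ``reduces to a single constant'': a nondegenerate interval cannot be partitioned into several disjoint closed subintervals). Two small points to tighten: (i) at a junction with a contact segment the limiting support direction of the arc may equal $\eta_i$, so such an endpoint need not be angular --- your claim $\theta_0\neq\eta_i$ is unjustified, though harmless since only interior breaks matter for how the lemma is used; (ii) run the contradiction on the closed interval $[\theta_-+\pi,\theta_++\pi]$, since $\eta_i=\theta_\pm+\pi$ already gives $\eta_i+\pi\in J_P$ and hence $P=M_i$.
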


\begin{proof}
We have seen that a piece of $\gamma$ whose points have a unique supporting line corresponds to an arc of a given circle with diameter $D$. All such points are represented by a unique angle. Hence, denoting by $I$ the corresponding interval of angles, the relation $h(\cdot)+h(\cdot+\pi)=D$ 
holds true on $I$.
It follows that an arc of circle breaks in the interior of $\gamma$ if, and only if there exists an angular point $M$ represented by an interval $J_{M}$ on which the relation $h(\cdot)+h(\cdot+\pi)=D$ is not satisfied 
(otherwise we would necessarily have $R=D$ on $J_M$ because of Lemma~\ref{lem6}, which is impossible because of Lemma~\ref{lem8}). Therefore, only an angular point can break an arc of circle and we claim that such a point is necessarily one of the points $M_{1}$, $M_2$, $M_3$. Indeed, let us write $J_{M}=[\alpha,\beta]$ with $\alpha \leq \beta$ and recall that for $\varepsilon>0$ small enough, $\theta\in[\alpha-\varepsilon,\alpha]$ (and respectively $\theta\in[\beta,\beta+\varepsilon]$) is associated to a point on an arc of circle with diameter $D$. Let $A$ (resp. $B$) be the points of $\partial K^\star$ corresponding  by $\alpha+\pi$ (resp. $\beta+\pi$). If $A=B$, there are two pairs of arc of circle with same center, same radius meeting with a nonzero angle, which is impossible. Thus, 
one has $A\neq B$ and there is a point in the boundary between $A$ and $B$ which does not saturate the diameter constraint (otherwise, using the same arguments as above, there would exist an arc of circle of radius $D$ between $A$ and $B$). This point belongs necessarily to a contact line, which proves that $J_{M}$ contains one of the angles $\eta_{i}+\pi$, $i=1,2,3$. It follows that $M$ corresponds to a point $M_{i}$, $i=1,2,3$.
\end{proof}

According to Lemma~\ref{lem8} and Lemma \ref{lem9}, each free zone of $\gamma$ is made of one or two arc of circles, and for each one, the antipodal arc of circle is in $\gamma$.

We end our study by distinguishing between two cases, depending on whether $\gamma$ is made of two or three free zones.

%

\subsubsection{Case of two free zones}
First of all, let us remark that the case where the boundary contains only one free zone
cannot occur. Indeed, it would mean that all the points in this free zone, that we know
to be diametral, would be at the distance $D$ of one vertex of the triangle. But this
is impossible, according to Lemma~\ref{lem7}. Thus, it remains to look at 
the case of
two free zones. In that case, one of the vertices of the triangle belongs 
to the boundary $\partial K^\star$. Exactly for the same reason, it is impossible that
one piece of the free boundary is diametral to this vertex.
Therefore,  
the two remaining free zones that we denote $ Z_{1}$ and $Z_{2}$ are mutually diametral, which means that for each $M_{1}$ in $Z_{1}$ there exists 
$M_{2}$ in $Z_{2}$ with $M_{1}M_{2}=D$.

\begin{figure}[h!]
\begin{center}
\includegraphics[width=6cm]{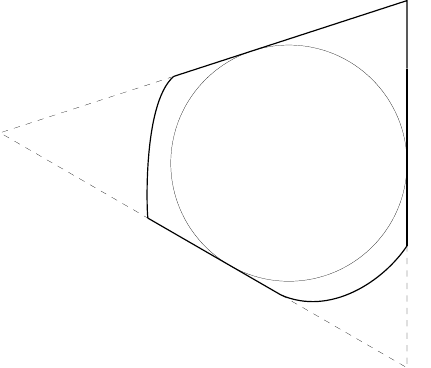}
\caption{A convex set with two free zones \label{fig:2ZL}}
\end{center}
\end{figure}
The case of two free zones arises whenever some points $A_i$ and $B_i$ on 
Fig.~\ref{fig:triangle}  coincide  with a vertex $S_i$. According to Lemma~\ref{lem5}, the contact point are the middle of the contact segments. Moreover, two segments have a vertex as endpoint, and it is necessary for the contact segment to be included in the edges of the triangle that this 
vertex is closer to the contact points than the other vertices.
With the notations previously introduced  (and summed-up on Fig.~\ref{fig:triangle}), we have $S_{i}I_{j}=\tan\varphi_{i}$ for $i\neq j$. 
Since we assumed that  $0<\varphi_{1}\leq \varphi_{2}\leq \varphi_{3}<\pi/2$, the vertex is necessarily $S_{1}$ and one has $I_{2}A_{2}=I_{3}B_{3}=\tan\varphi_{1}$.

{Assume hence without loss of generality that $Z_1$ contains $A_{1}$. Since $A_{1}$ is
diametral, there exists $M\in Z_{2}$ such that $MA_1=D$. We are going to prove that $M$ is unique and equal to $A_2$.
Assume by contradiction that it is not the case. Then there exists an angle $\theta \notin [\eta_2-\alpha_2,\eta_2]$ representing $M$ with $\theta+\pi\in[\eta_1-\alpha_1,\eta_1]$ and $h(\theta)+h(\theta+\pi)=D$. Consider $\varepsilon>0$ small such that $\eta_2 -\alpha_2-\varepsilon>\theta$. Since the only angular point is a point $M_i$, every angle $\theta'\in]\eta_2-\alpha_2-\varepsilon,\eta_2-\alpha_2[$ uniquely represents a point that is diametral. We deduce that for all $\theta'\in ]\eta_2-\alpha_2-\varepsilon,\eta_2-\alpha_2[$, $h(\theta')+h(\theta'+\pi)=D$. From the 
inequalities: $\theta+\pi \geq \eta_1-\alpha_1$ and $\theta'>\theta$ we obtain that $\theta'+\pi \geq \eta_1-\alpha_1$. The inequality $\eta_2-\eta_1<\pi$ guarantees that $\theta'+\pi \in [\eta_1-\alpha_1,\eta_1]$, which means that every point represented by the angles $\theta'\in  ]\eta_2-\alpha_2-\varepsilon,\eta_2-\alpha_2[$ are diametral to $A_1$, hence the existence of an arc of radius $D$, which is impossible.
}

Assume hence without loss of generality that $Z_1$ contains $A_{1}$. Since $A_{1}$ is
diametral, there exists $M\in Z_{2}$ such that $MA_1=D$. Assume by contradiction that $A_2$ is not diametral to $A_1$, hence there is a unique supporting line at $M$. Let $\theta$ be the angle associated to this support line. By uniqueness of the supporting line, one has necessarily $h(\theta)+h(\theta+\pi)=D$ with $\theta+\pi\in [\eta_{1}-\alpha_{1},\eta_{1}]$. Then, every point "above" $M$ is represented by a unique angle $\theta'>\theta$ and we have $h(\theta')+h(\theta'+\pi)=D$ but $\theta'+\pi>\theta+\pi$, so the angle $\theta'+\pi$ also represents $A_{1}$.
 It shows that every point above $M$ is diametral to $A_{1}$. In particular, $A_{1}$ and $A_{2}$ are diametral, whence the contradiction. Similarly, one shows that $B_{3}B_{1}=D$. 
 
 Recall that the free zones are only made of arc of circles of diameter $D$. Let us show that each free zone is one arc of circle, that is antipodal to the other free zone. 
 If it were not the case, one point $M_{i}$ with $i=2,3$ would be in the interior of the free zone. Let us consider without loss of generality that $M_3$ belongs to the interior of the free boundary.  
 Let $N$ be a point of $\gamma$ strictly between $B_1$ and $M_3$. Let $\theta$ be the corresponding angle of the associated supporting line, which 
is unique. Then, $\theta<\eta_3+\pi$ and $N$ is diametral with a point whose angles set of its supporting line(s) is included in $(\eta_2,\eta_3)$. It is necessarily $S_1$. But this is impossible according to Lemma~\ref{lem7} since $\gamma$ cannot contain an arc of circle of radius $D$ whose center is a vertex of $T$.  

Therefore, the free zones are antipodal arcs of circle of radius $D/2$. 
Since the points $A_{1}$, $B_{1}$, $A_{2}$, $B_{3}$ belong to the same circle and are two by two diametral, they are the vertices of a rectangle, meaning that $T$ is an isosceles triangle
(we use here the fact that the incircle and the rectangle share the same axis of symmetry). Taking the convention that $\eta_{1}=-\pi/2$, we have $\eta_{3}=\pi-\eta_{2}$ and $\varphi_{1}=\pi/2-\eta_{2}$ (see Fig~\ref{fig:2ZLopt}). 

\begin{figure}[H]
\begin{center}
\includegraphics[width=12cm]{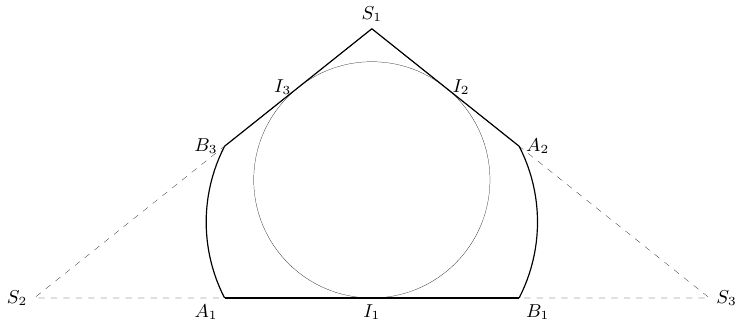}
\caption{Picture of an admissible set with two free zones \label{fig:2ZLopt}}
\end{center}
\end{figure}

Now let us compute the exact value of $\eta_{2}$ with respect to $D$. Since $\varphi_{1}\leq \pi/3$, one has necessarily $\eta_{2}\geq \pi/6$. 

Let us consider the orthonormal basis $(O;\frac{\overrightarrow{A_1B_1}}{A_1B_1},\frac{\overrightarrow{A_1B_3}}{A_1B_3})$ centered at $O$, the incircle center. Since the abscissa of $A_1$ is the same as the one of $B_3$ 
and since $I_3$ is the middle of $[S_1B_3]$ (and resp. $I_2$ is the middle of $[S_1A_2]$), we infer that the coordinates of $A_1$ and $A_2$ are then  
$$
A_{1}=(-2\cos\eta_{2},-1)\quad \text{and}\quad A_{2}=\left(2\cos\eta_{2},\frac{\cos(2\eta_{2})}{\sin\eta_{2}}\right).
$$

Solving the equation $A_{1}A_{2}=D$ leads to the polynomial equation:

\begin{equation}\label{2ZLeq1}
P(\sin\eta_2)=0\quad \text{with }P(X)=X^{3}-\frac{D^{2}-1}{4}X^{2}-\frac{1}{2}X+\frac14 .
\end{equation}
We need to determine a solution in $[1/2,1]$. Assume that $D>2$. 
Let us observe that $P(1)=\frac{4-D^{2}}{4} <0$ and $P(1/2)=\frac{3-D^{2}}{16}<0$. Furthermore, one shows easily that $P$ is either decreasing 
on $(1/2,1)$ or decreasing and then increasing on $(1/2,1)$. Thus the equation $P(\sin\eta_2)=0$ has no solution on $[1/2,1].$
 We conclude that this is not possible to build an optimal set with two free zones.

\subsubsection{Case of three free zones}
Let us distinguish between two cases.
\begin{center}
{\bf Subcase 1: all the points $M_{i}$, $i=1,2,3$ belong to the interior of $\gamma$.} 
\end{center}
In this case, the previous study has shown that the free boundary is as follows (see Fig.~\ref{fig:triangle3ZL})
\begin{itemize}
\item
$\arc{A_{3}M_{1}}$ and $\arc{B_{1}M_{3}}$ are antipodal arcs of circle of 
radius $D/2$,
\item
$\arc{A_{2}M_{3}}$ and $\arc{B_{3}M_{2}}$ are antipodal arcs of circle of 
radius $D/2$,
\item
$\arc{A_{1}M_{2}}$ and $\arc{B_{2}M_{1}}$ are antipodal arcs of circle of 
radius $D/2$,
\item $I_{i}$ is on the middle of $[A_{i},B_{i}]$
\item $M_{i}$ is on the perpendicular bisector of $[A_{i},B_{i}]$ (or $I_{i},O$ and $M_{i}$ are aligned).

\end{itemize}

We deduce the relationships
\begin{equation}\label{3ZL:1}
       \overrightarrow{M_{3}B_{1}}=\overrightarrow{M_1A_3}, \quad 
       \overrightarrow{M_1B_2}=\overrightarrow{M_2A_1}, \quad \text{and}\quad 
       \overrightarrow{M_2B_3}=\overrightarrow{M_3A_2}.
\end{equation}

\begin{figure}[H]
\begin{center}
\includegraphics[width=10cm]{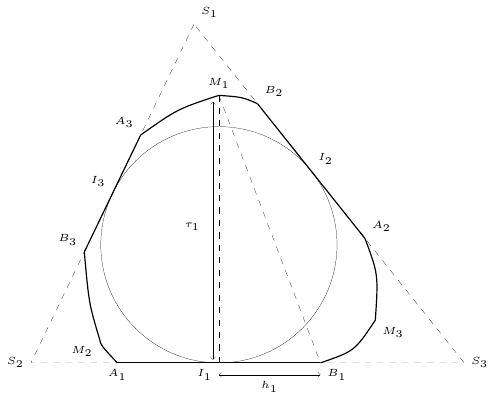}
\caption{Case of three free zones and the $M_{i}$'s belong to the interior of the free zones. \label{fig:triangle3ZL}}
\end{center}
\end{figure}

Let $\tau_{i}=M_{i}I_{i}$ and $h_{i}=I_{i}A_{i}$.
Then necessarily $\tau_{i}>2$ and we have the relationship
\begin{equation}\label{3ZL:2}
h_{i}=\sqrt{D^{2}-\tau_{i}^{2}}
\end{equation}
Let us consider the orthonormal basis $(O;\frac{\overrightarrow{I_1B_1}}{I_1B_1},\frac{\overrightarrow{I_1O}}{I_1O})$ centered at $O$, the incircle center. For $i=1,2,3$, the coordinates of $A_i$, $B_i$ and $M_i$ are
$$
A_{i}=
\begin{pmatrix}
\cos\eta_i+h_{i}\sin\eta_i\\
\sin\eta_i-h_{i}\cos\eta_i
\end{pmatrix}
, \quad 
B_{i}=
\begin{pmatrix}
\cos\eta_i-h_{i}\sin\eta_i\\
\sin\eta_i+h_{i}\cos\eta_i
\end{pmatrix},\quad
M_{i}=(1-\tau_{i})\times
\begin{pmatrix}
\cos\eta_i\\
\sin\eta_i
\end{pmatrix}.
$$
By assimilating the index $i$ with $i+3$, the vector relationships above rewrites
\begin{equation}\label{3ZL:3}
i=1,2,3,\quad \left\{
	  \begin{array}{ll}
2-\tau_{i}&=(2-\tau_{i+1})\cos(\eta_{i}-\eta_{i+1})+h_{i+1}\sin(\eta_{i}-\eta_{i+1})\\
h_{i}&=(2-\tau_{i+1})\sin(\eta_{i}-\eta_{i+1})-h_{i+1}\cos(\eta_{i}-\eta_{i+1}),
\end{array}
\right.
\end{equation}
from which we infer that
\begin{equation}\label{3ZL:4}
\left\{
	  \begin{array}{ll}
(2-\tau_{1})\tan(\eta_{3}-\eta_{2})&=h_{1}\\
(2-\tau_{2})\tan(\eta_{1}-\eta_{3})&=h_{2}\\
(2-\tau_{3})\tan(\eta_{2}-\eta_{1})&=h_{3}.
\end{array}
\right.
\end{equation}
%
%
With the value of $h_{i}$ given by \eqref{3ZL:2}, we have the quadratic equation on $\tau_{1}$:
\begin{equation}\label{3ZL:6}
(2-\tau_{1})^{2}\tan^{2}(\eta_{3}-\eta_{2})=D^{2}-\tau_{1}^{2}.
\end{equation}
and similarly for the others. This yields
\begin{equation}\label{32L:7}
2-\tau_{1}=2\cos^{2}(\eta_{3}-\eta_{2})\pm\cos(\eta_{3}-\eta_{2})\sqrt{D^{2}-4\sin^{2}(\eta_{3}-\eta_{2})}.
\end{equation}

Since $2-\tau_{1}$ is negative, we can choose the sign depending on the value of $\cos$. Recall that $\eta_{i+1}-\eta_{i}\in (0,\pi)$ and $\eta_{3}-\eta_{2}\leq \eta_{1}-\eta_{3}\leq \eta_{2}-\eta_{1}$. Furthermore, $\eta_{i+1}-\eta_{i}\in (0,\pi/2)$ means that the triangle has an obtuse angle. This can happen only once, and for $\eta_{3}-\eta_{2}$. So at least $\eta_{1}-\eta_{3}$ and  $\eta_{2}-\eta_{1}$ are in $(\pi/2,\pi)$ and their cosine is negative. Assuming now that we have $\eta_{3}-\eta_{2}>\pi/2$ 
leads to
\begin{equation}\label{3ZL:8}
\left\{
	  \begin{array}{ll}
2-\tau_{1}=2\cos^{2}(\eta_{3}-\eta_{2})+\cos(\eta_{3}-\eta_{2})\sqrt{D^{2}-4\sin^{2}(\eta_{3}-\eta_{2})}\\
2-\tau_{2}=2\cos^{2}(\eta_{1}-\eta_{3})+\cos(\eta_{1}-\eta_{3})\sqrt{D^{2}-4\sin^{2}(\eta_{1}-\eta_{3})}\\
2-\tau_{3}=2\cos^{2}(\eta_{2}-\eta_{1})+\cos(\eta_{2}-\eta_{1})\sqrt{D^{2}-4\sin^{2}(\eta_{2}-\eta_{1})}.
\end{array}
\right.
\end{equation}

By replacing $h_{i}$ by its value \eqref{3ZL:4} in \eqref{3ZL:1}, we obtain after calculation
\begin{equation}\label{3ZL:9}
\left\{
	  \begin{array}{ll}
(2-\tau_{3})\cos(\eta_{3}-\eta_{2})&=(2-\tau_{1})\cos(\eta_{1}-\eta_{2})\\
(2-\tau_{2})\cos(\eta_{2}-\eta_{1})&=(2-\tau_{3})\cos(\eta_{3}-\eta_{1})\\
(2-\tau_{1})\cos(\eta_{1}-\eta_{3})&=(2-\tau_{2})\cos(\eta_{2}-\eta_{3}).
\end{array}
\right.
\end{equation}

Finally, replacing $2-\tau_{i}$ by his expression in \eqref{3ZL:9} and using that $\cos(\eta_{i+1}-\eta_{i})\neq0$, we get
\begin{equation}\label{3ZL:10}
\left\{
	  \begin{array}{ll}
2\cos(\eta_{2}-\eta_{1})+\sqrt{D^{2}-4\sin^{2}(\eta_{2}-\eta_{1})}&=2\cos(\eta_{3}-\eta_{2})+\sqrt{D^{2}-4\sin^{2}(\eta_{3}-\eta_{2})}\\
2\cos(\eta_{2}-\eta_{1})+\sqrt{D^{2}-4\sin^{2}(\eta_{2}-\eta_{1})}&=2\cos(\eta_{1}-\eta_{3})+\sqrt{D^{2}-4\sin^{2}(\eta_{1}-\eta_{3})}.
\end{array}
\right.
\end{equation}

Let $f:x\mapsto 2\cos x+\sqrt{D^2-4\sin^{2}x}$.
One easily shows that $f$ is decreasing on $(\pi/2,\pi)$ and hence injective (see Fig.~\ref{fig:plotg}). We thus infer that
$$
\eta_{3}-\eta_{2}=\eta_{1}-\eta_{3}=\eta_{2}-\eta_{1}=\frac{2\pi}{3}.
$$
The triangle $T$ is therefore equilateral and one has $\tau_{1}=\tau_{2}=\tau_{3}=(3+\sqrt{D^{2}-3})/2$.
We recover the smoothed nonagon introduced in Def.~\ref{defKE}.

\medskip

Assume now that $\eta_{3}-\eta_{2}\leq \pi/2$. If $\eta_{3}-\eta_{2}= \pi/2$, then $\tau_{1}=2$ and $M_{1}$ is on in the incircle, which is impossible for $D>2$, otherwise the arc of circle would cross the incircle.

Now we have
\begin{equation}\label{3ZL:10a}
\left\{
	  \begin{array}{ll}
2-\tau_{1}=2\cos^{2}(\eta_{3}-\eta_{2})-\cos(\eta_{3}-\eta_{2})\sqrt{D^{2}-4\sin^{2}(\eta_{3}-\eta_{2})}\\
2-\tau_{2}=2\cos^{2}(\eta_{1}-\eta_{3})+\cos(\eta_{1}-\eta_{3})\sqrt{D^{2}-4\sin^{2}(\eta_{1}-\eta_{3})}\\
2-\tau_{3}=2\cos^{2}(\eta_{2}-\eta_{1})+\cos(\eta_{2}-\eta_{1})\sqrt{D^{2}-4\sin^{2}(\eta_{2}-\eta_{1})}.
\end{array}
\right.
\end{equation}

The same computations as above yield
\begin{equation}\label{3ZL:11}
2\cos(\eta_{3}-\eta_{2})-\sqrt{D^{2}-4\sin^{2}(\eta_{3}-\eta_{2})}=2\cos(\eta_{1}-\eta_{3})+\sqrt{D^{2}-4\sin^{2}(\eta_{1}-\eta_{3})}.
\end{equation}

Now, let us introduce $g:x\mapsto 2\cos x-\sqrt{D^2-4\sin^{2}x}$. 
One easily sees that $g$ is negative while $f$ is positive and therefore, 
the equation $f(x)=g(y)$ has no solution. We conclude that this case cannot happen.

\begin{figure}[H]
\begin{center}
\includegraphics[width=8cm]{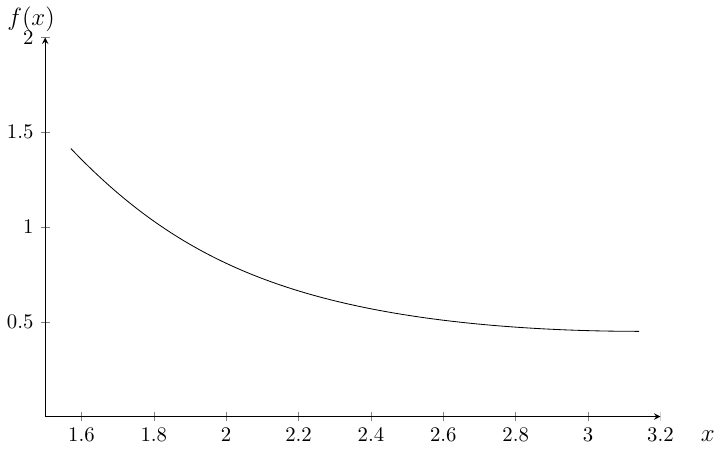}
\includegraphics[width=8cm]{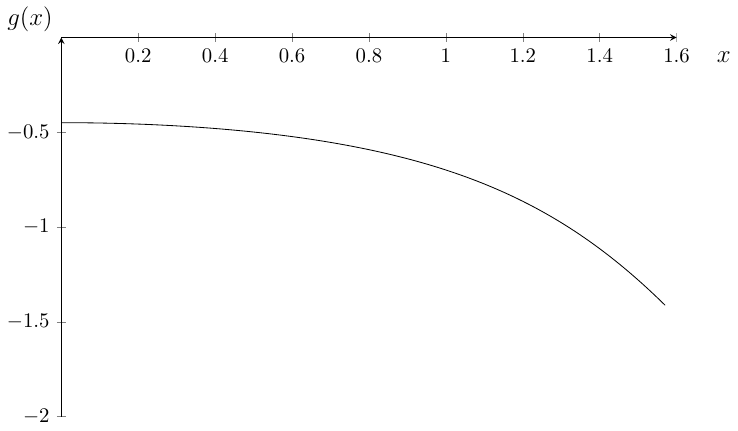}
\caption{$D^{2}=6$. Left: plot of the function $f$. Right: plot of the function $g$.\label{fig:plotg}}
\end{center}
\end{figure}

Finally, the solution for this sub-case is $K_{E}(D)$ defined in Def.~\ref{defKE}. Observe that since $K_E(D)$ is inscribed in the equilateral triangle, we need to have $h< \sqrt{3}$, ie $\tau< 3$ and $D< 2\sqrt{3}$, whence the requirement on $D$ for the sake of the definition of $K_{E}(D)$.
\begin{center}
{\bf Subcase 2: at least one point $M_{i}$ is on the boundary of the free 
zone, namely it is one of the points $A_{j}$ or $B_{j}$.} 
\end{center}
Assume here that a point $M_{i}$, say $M_{1}$ is not in the interior of the free zone. Then $M_{1}=B_{2}$ or $M_{1}=A_{3}$, say $M_{1}=B_{2}$. The free zone $Z_{1}$ is an arc of circle of radius $D/2$ whose antipodal arc is $ \arc{B_{1}M_{3}}$. If $M_{3}$ is also on the boundary of $Z_{3}$ then $Z_{1}$ and $Z_{3}$ would be antipodal and $Z_{2}$ would not have any antipodal arc of circle. This is impossible. So $M_{3}$ lies in the interior of $Z_{3}$ and it has a second arc of circle: $\arc{M_{3}A_{2}}$ which antipodal arc is $\arc{M_{2}B_{3}}$. We claim that $M_{2}=A_{1}$ otherwise $\arc{M_{2}A_{1}}$ would not have antipodal arc.

\begin{figure}[H]
\begin{center}
\includegraphics[width=10cm]{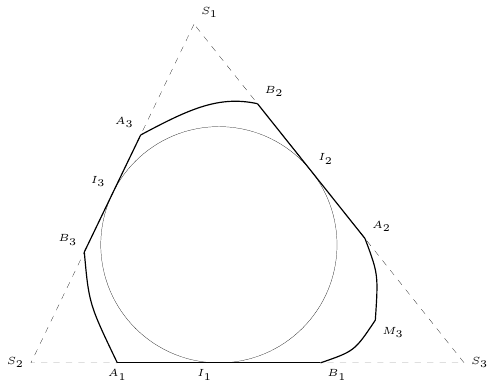}
\caption{An approximate illustration of the case of three free zones and $M_{1}$ in the boundary of the free zone. \label{fig:triangle3ZLMcote}}
\end{center}
\end{figure}

Now, in comparison with the first case, only two vector relation are valid, namely
\begin{equation}\label{Mcote:1}
\overrightarrow{B_1M_3}=\overrightarrow{A_3B_2}\quad \text{and}\quad \overrightarrow{A_2M_3}=\overrightarrow{B_3A_1}.
\end{equation}

Taking the same notations as in the first case with $\tau=\tau_{3}$, one has
\begin{equation}\label{Mcote:2}
\left\{
	  \begin{array}{ll}
\cos\eta_2-h_2\sin\eta_2-\cos\eta_3-h_3\sin\eta_3=(1-\tau)\cos\eta_3-\cos\eta_1+h_1\sin\eta_1\\
\sin\eta_2+h_2\cos\eta_2-\sin\eta_3+h_3\cos\eta_3=(1-\tau)\sin\eta_3-\sin\eta_1-h_1\cos\eta_1
\end{array}
\right.
\end{equation}
and
\begin{equation}\label{Mcote:3}
\left\{
	  \begin{array}{ll}
\cos\eta_1+h_1\sin\eta_1-\cos\eta_3+h_3\sin\eta_3=(1-\tau)\cos\eta_3-\cos\eta_2-h_2\sin\eta_2\\
\sin\eta_2-h_1\cos\eta_1-\sin\eta_3-h_3\cos\eta_3=(1-\tau)\sin\eta_3-\sin\eta_2+h_2\cos\eta_2
\end{array}
\right.
\end{equation}

The same kind of computations as in the first case lead to the following statements:
\begin{equation}\label{Mcote:4}
\left\{
\begin{array}{ll}
\eta_{3}-\eta_{2}=\eta_1-\eta_3=y\\
h_1=h_2\\
2-\tau=2\cos y<0\\
\tau^{2}+h_3^{2}=D^2\\
2h_1=-h_3\cos y.
\end{array}
\right.
\end{equation}

Now set $\eta_{3}=-\pi/2$. then $\eta_{1}=y-\pi/2\in[0,\pi/2]$ and $\eta_{2}=\pi-\eta_{1}$. Observe that$ A_2M_3A_1B_3$ is a rectangle which 
leads to the new equation $\overrightarrow{A_1M_3}\cdot \overrightarrow{A_2M_3}=0$. It rewrites
\begin{equation}\label{Mcote:5}
(\tau-1)^{2}-2(\tau-1)\sin\eta_1+(h_1^{2}+1)(2\sin^{2}\eta_1-1)=0
\end{equation}
and using that
\begin{equation}\label{Mcote:6}
\sin\eta_1=\tau/2-1\quad \text{and}\quad
h_1^{2}+1=\frac{D^{2}-\tau^{2}}{(\tau-2)^{2}}+1,
\end{equation}
Equation \eqref{Mcote:5} becomes
\begin{equation}\label{Mcote:7}
-\tau^{3}+\left(D^{2}/2+5\right)\tau^{2}-\left(2D^{2}+4\right)\tau+D^{2}=0.
\end{equation}
Since $\tau$ has to be a root of the polynomial in $[2,3]$, a calculus argument shows that for $D\in[2,2\sqrt{3}]$, the polynomial has a unique root in $[2,3]$, with $\tau(2)=2$, $\tau(2\sqrt{3})=3$ and $\tau$ is an 
increasing function.

Finally this leads to the construction of the set $K_C(D)$ shown in Fig.~\ref{fig:McoteOpt}.

Furthermore, if we set $t_{1}=\arcsin\left(\frac{2(\sin\eta_{1}+h_1\cos\eta_{1})-\tau+2}{D}\right)$ and $t_2=\arcsin(\tau/D)$ then we have the 
formula
\begin{equation}\label{Mcote:8}
\vert K_C(D)\vert=\frac{\tau}{\tau-2}\sqrt{D^{2}-\tau^{2}}+\frac{D^{2}}{2}(t_2-t_1).
\end{equation}

Let us remark that, using \eqref{Mcote:6}, we have $\cos^2 t_2=(D^2-\tau^2)/D^2$
and $(\tau -2)^2=4\sin^2\eta_1$, thus
$$h_1^2=\frac{D^2-\tau^2}{(\tau-2)^2}=\frac{D^2\cos^2t_2}{4\sin^2\eta_1}\Longrightarrow
h_1=\frac{D\cos t_2}{2\sin\eta_1},$$
and replacing in the definition of $t_1$, it provides the alternative formula
\begin{equation}\label{Mcote:9}
t_1=\arcsin\left(\frac{\cos t_2}{\tan \eta_1}\right).
\end{equation}
\subsection{Comparison}

Now we have to determine what is the optimal shape for a given $D$. Previous analysis show that for $D\geq 2\sqrt{3}$ it is not possible to construct the sets $K_{E}$ and $K_{C}$. Hence the stadium $K_{S}$ is optimal for such $D$.
Let us have a look to the graphics of the area of the three domain for $D\in [2,2\sqrt{3}]$. Now let us investigate the case $D\in [2,2\sqrt{3}]$. 
Graphics \ref{fig:Comparaison1} suggest that, {the inradius $r$ being prescribed}, the set $K_{E}$ is optimal for small values of $D$ and $K_{S}$ is optimal for {large} values of $D$. In the following we prove two facts: 

\begin{enumerate}
\item
The domain $K_C(D)$ is never optimal,
\item
the existence of  $D^{\star}$ such that for $D\leq D^{\star}$, $\vert K_{E}(D)\vert \geq \vert K_{S}(D)\vert$ and for $D\geq D^{\star}$, $\vert K_{S}(D)\vert \geq \vert K_{E}(D)\vert$.
\end{enumerate}

\begin{figure}[H]
\begin{center}
\subfigure[$D\in(2,2.5)$]{\includegraphics[width=8.3cm]{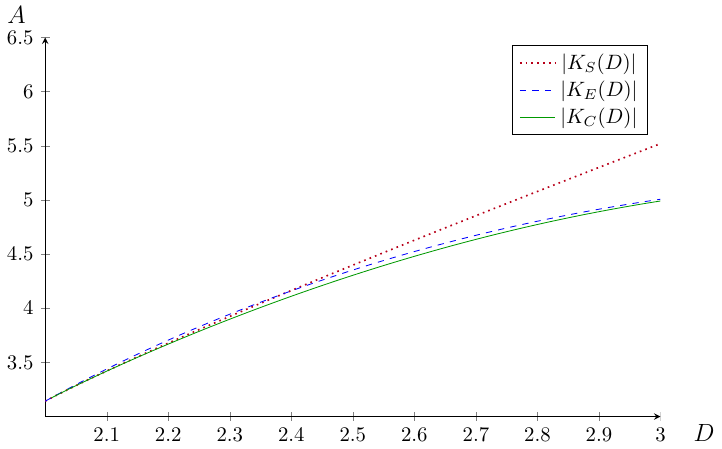}}
\subfigure[$D\in(2,2.1)$]{\includegraphics[width=8.3cm]{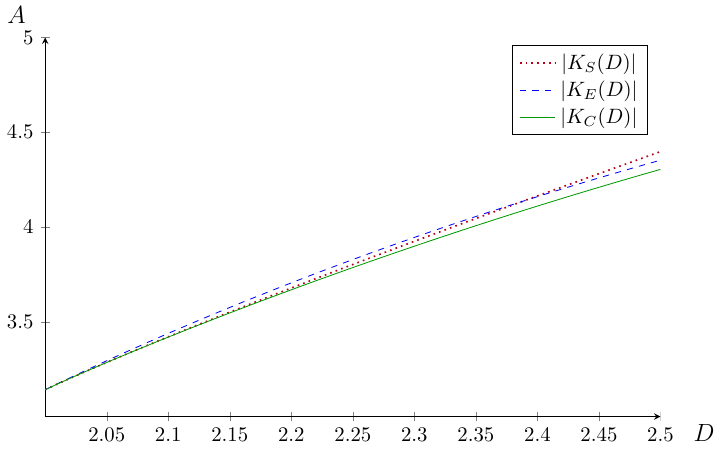}}
\caption {
Comparison of the three areas  \label{fig:Comparaison1}}
\end{center}
\end{figure}

\subsubsection{Proof that $K_C(D)$ is never optimal}
We are going to prove that $K_C(D)<K_S(D)$ for $D\in (2,2\sqrt{3}]$ by comparing their
derivatives (we know that $K_C(2)=K_S(2)=\pi$).
Let us write Equation \eqref{Mcote:7} in the following way
\begin{equation}\label{comp1}
D^2=\tau \frac{\tau^2-5\tau +4}{\frac{\tau^2}{2}-2\tau +1}:=\tau g(\tau)
\end{equation}
where the function $g:x\mapsto (x^2-5x+4)/(x^2/2-2x+1)$ is increasing.
Thus we make the change of variable $D\to \tau$ and rewrite the areas $K_C(D)$ and $K_S(D)$
in terms of $\tau \in [2,3]$. More precisely, we write $\tau=2+h$ with $h\in [0,1]$ and we
write all quantities in term of $h$. Let us observe that
\begin{equation}\label{comp2}
g(2+h)=2+\frac{h}{1-h^2/2}.
\end{equation}

We start with $K_C(D)$ given by \eqref{Mcote:8}. By \eqref{comp2}
$$D^2-\tau^2=(2+h)g(2+h)-(2+h)^2=h^2 \frac{h+h^2/2}{1-h^2/2}$$
and then the first term of $K_C(D)$ is
\begin{equation}\label{comp3}
\frac{\tau}{\tau-2}\sqrt{D^{2}-\tau^{2}} = (2+h) \sqrt{\frac{h+h^2/2}{1-h^2/2}}.
\end{equation}
A simple computation gives its derivative with respect to $h$:
\begin{equation}\label{comp3b}
\frac{d}{d h} \left(\frac{\tau}{\tau-2}\sqrt{D^{2}-\tau^{2}} \right)= \sqrt{\frac{2+h}{2h(1-h^2/2)}}
\frac{1+2h+h^2/2-h^3/2}{1-h^2/2}.
\end{equation}
Now we look at the other term in $K_C(D)$:
\begin{equation}\label{comp4}
t_2=\arcsin\left(\sqrt{\frac{\tau}{g(\tau)}}\right)=\arcsin\left(\sqrt{\frac{2+h-h^2-h^3/2}{2+h-h^2}}\right).
\end{equation}
Now, let us express $t_1$ using \eqref{Mcote:9}:
$$\cos t_2=\sqrt{1-\frac{\tau^2}{D^2}}=\sqrt{1-\frac{2+h}{g(2+h)}}=h\sqrt{\frac{h/2}{2+h-h^2}}$$
while , from $\sin\eta_1=h/2$, we get
$$\tan\eta_1=\sqrt{\frac{1}{1-\sin^2\eta_1}-1}=\frac{h}{4-h^2}.$$
From this, we infer:
\begin{equation}\label{comp5}
t_1=\arcsin\left(\sqrt{\frac{h(2+h)}{2(1+h)}}\right).
\end{equation}
Now using the formula $\arcsin b-\arcsin a= \arcsin\left( b\sqrt{1-a^2}-a\sqrt{1-b^2}\right)$
(all numbers $a$ and $b$ are between $0$ and $1$), we finally get thanks to \eqref{comp4}
and \eqref{comp5}:
\begin{equation}\label{comp6}
t_2-t_1=\arcsin\left((1-h)\sqrt{\frac{2+h}{2-h}}\right).
\end{equation}
In particular, we have
$$\frac{d}{d h}(t_2-t_1)=\frac{h^2-2h-2}{(2-h)\sqrt{2h(2+h)(1-h^2/2)}}.$$
Thus, one has
$$\frac{D^2}{2} \frac{d}{d h}(t_2-t_1)=\frac{(1+h)(2-h)(2+h)}{2(1-h^2/2)} \frac{h^2-2h-2}{(2-h)\sqrt{2h(2+h)(1-h^2/2)}}=$$
$$=\sqrt{\frac{2+h}{2h(1-h^2/2)}}
\frac{-1-2h-h^2/2+h^3/2}{1-h^2/2}$$
which is exactly the opposite of \eqref{comp3b}.
Therefore
$$\frac{d}{d h} K_C(2+h)=D \frac{d D}{dh} \arcsin\left((1-h)\sqrt{\frac{2+h}{2-h}}\right).$$

On the other hand, since 
$$\frac{d}{dD} K_S(D)= D\arcsin\left(\frac{2}{D}\right)$$ we have
$$\frac{d}{d h} K_S(2+h)=D \frac{d D}{dh} \arcsin\left(\frac{2}{D}\right)$$
and to compare the derivatives, it suffices to compare the arguments in the $\arcsin$.
Now
$$\frac{2}{D}=\sqrt{\frac{2(1-h^2/2)}{(1+h)(2-h)(2+h)}}$$
and squaring and simplifying amounts to prove
$$\frac{4(1-h^2/2)}{(1+h)(2+h)}\,>\, (1-h)^2(2+h) \Leftrightarrow h^2(5+h-3h^2-h^3)>0$$
which is true for $0<h\leq  1$. This finishes the proof of $K_S(D)>K_C(D)$ for $D>2$.

\subsubsection{Existence of  $D^{\star}$}

Note that $\vert K_S(2)\vert =\vert K_E(2)\vert =\pi$. Now we compute 
the derivative of $D\mapsto \vert K_E(D)\vert -\vert K_S(D)\vert $ which is given by
\[
\frac{d}{dD}(\vert K_E(D)\vert -\vert K_S(D)\vert)=\frac{3}{2}\times D\left(\frac{2\pi}{3}-2\arccos(\frac{\sqrt{3}}{D})\right)-D\arcsin\left(\frac{2}{D}\right),
\]
which has the same sign as $\pi-3\arccos(\frac{\sqrt{3}}{D}))-\arcsin(2/D)=g(D)$.

Now, we have 
\[
g'(D)=-\frac{3\sqrt{3}}{D\sqrt{D^{2}-3}}+\frac{2}{D\sqrt{D^2-4}}
\]
which is positive if and only if $D\in[2,\sqrt{\frac{96}{23}}]$. Together 
with $g(0)=0$ and $g(2\sqrt{3})=-\arcsin(\frac{1}{\sqrt{3}})<0$ we get that $g$ is positive then negative. Finally we deduce that  $D\mapsto \vert K_E(D)\vert -\vert K_S(D)\vert $ is increasing then decreasing with value 0 at $2$ and taking negative value at $2\sqrt{3}$. We finally get the existence of some $D^{\star}\in[2,2\sqrt{3}]$ such that for $D\leq D^{\star}$,  $\vert K_E(D)\vert \geq \vert K_S(D)\vert $ and for $D\geq D^{\star}$,  $\vert K_E(D)\vert \leq \vert K_S(D)\vert $. This conclude the proof.

\appendix
\section{Proof of Lemma \ref{lemopcond}}\label{sec:prooflemopcond}
To prove the Lemma, we will introduce an auxiliary problem whose unknown is the restriction of $R$ to the set $I$. Let us introduce $J=[0,2\pi]\backslash (I\cup (I+\pi))$. Let us decompose $R$ as $R=R_0\mathds{1}_J+u^\star\mathds{1}_I+(D-u^\star(\cdot-\pi))\mathds{1}_{I+\pi}$, and observe that
$$
J(R)=\int_I (2F[R]u^\star-DF[R]-Du^\star+D^2)+\int_J F[R]R_0.
$$
and 
$$
\int_Iu^\star(\theta)\cos \theta\, d\theta=\alpha \quad\text{and}\quad
\int_Iu^\star(\theta)\sin \theta\, d\theta=\beta,
$$
with $\alpha=-\frac12\int_J R_0(\theta)\cos \theta\, d\theta+\frac{D}{2}\int_I\cos \theta\, d\theta$ and $\beta=-\frac12\int_J R_0(\theta)\sin 
\theta\, d\theta+\frac{D}{2}\int_I\sin \theta\, d\theta$.

We will now characterize $u^\star$ by exploiting that it solves the optimization problem
\begin{equation}\label{pbaux:metz1600}
\sup_{u\in \widetilde{\mathcal{R}}_D}\widetilde{J}(u)\quad \text{where}\quad \widetilde{J}(u)=\int_I (2h u-Dh-Du+D^2)+\int_J hR_0,
\end{equation}
where $h$ solves the ODE 
\begin{equation}\label{hODE1705}
\left\{\begin{array}{ll}
h+h''= R_0\mathds{1}_J+u\mathds{1}_I+(D-u(\cdot-\pi))\mathds{1}_{I+\pi} 
& \text{in }(0,2\pi)\\
\int_0^{2\pi}h(\theta)e^{i\theta}\, d\theta=0 & \\
h(0)=h(2\pi), \ h'(0)=h'(2\pi) & 
\end{array}
\right.
\end{equation}
and
$$
\widetilde{\mathcal{R}}_D=\left\{u\in L^\infty(I;[0,D])\mid \int_I u(\theta)e^{i\theta}\, d\theta=\alpha+i\beta \right\}.
$$
Let us now derive the first order necessary optimality conditions for this problem. Since the method is standard, we briefly comment on the method 
allowing us to write such conditions: first, the mapping $\widetilde{\mathcal{R}}_D\ni u\mapsto h$, where $h$ solves \eqref{hODE1705}, being linear it is G\^ateaux-differentiable at $u^\star$ in every direction $\xi$ belonging to the tangent cone to the set $\widetilde{\mathcal{R}}_D$ at $u^\star$. Furthermore, its differential $\dot h$ is the unique solution of the ODE
$$
\left\{\begin{array}{ll}
\dot h+\dot h''= \xi \mathds{1}_I-\xi(\cdot-\pi)\mathds{1}_{I+\pi} & \text{in }(0,2\pi)\\
\int_0^{2\pi}\dot h (\theta)e^{i\theta}\, d\theta=0 & \\
\dot h(0)=\dot h(2\pi), \ \dot h'(0)=\dot h'(2\pi). & 
\end{array}
\right.
$$ 
It follows that $\widetilde{\mathcal{R}}_D\ni u\mapsto \widetilde{J}(u)$ is G\^ateaux-differentiable at $u^\star$ and its differential reads
\begin{eqnarray*}
\langle d\widetilde{J}(u^\star),\xi\rangle &=& \lim_{\eta\searrow 0}\frac{\widetilde{J}(u^\star+\eta \xi)-\widetilde{J}(u^\star)}{\eta}=\int_I 
(2\dot hu^\star+2h\xi -D\dot h-D\xi)+\int_J\dot hR_0\\
&=& \int_I(2h-D)\xi + \int_I \dot h (2u^\star-D) +\int_J \dot h R_0=2\int_I(2h-D)\xi ,
\end{eqnarray*}
by using several times integration by parts and the relation $h(\theta)+h(\theta+\pi)=D$ on $I$.

We now have to deal with two kinds of constraints in $\widetilde{\mathcal{R}}_D$: a global $L^1$ one and point-wise ones, since $u$ belongs to $[0,D]$ almost everywhere. Although such constraints are standard, we briefly explain how to derive the Euler inequation for this problem with the help of a penalization approach, for the sake of completeness. For $\varepsilon>0$, let us introduce $\widetilde{J}_\varepsilon$ as the penalized functional 
$$
\widetilde{J}_\varepsilon (u)= \widetilde{J}(u)+\frac{1}{\varepsilon}\left|\int_I u(\theta)e^{i\theta}\, d\theta-(\alpha+i\beta)\right|^2.
$$
We consider the optimization problem
\begin{equation}\label{pbpen1805}
\sup_{u\in L^\infty(I;[0,D]) }\widetilde{J}_\varepsilon (u).
\end{equation}
On what follows, we will need to consider an element $\xi$ to the tangent 
cone $\mathcal{T}_{u}$ to $L^\infty(I;[0,D])$ at $u$, that we describe hereafter.

Since they follow from a basic variational analysis, we do not provide all the details to the following claims:
\begin{itemize}
\item Since $L^\infty(I;[0,D])$ is compact for the weak-star convergence in $L^\infty$, the resolvent operator $\widetilde{\mathcal{R}}_D\ni u\mapsto h\in L^2(\mathbb{T})$ is compact and therefore,
the penalized problem \eqref{pbpen1805} has a solution $u_\varepsilon \in 
L^\infty(I;[0,D])$. 
\item Let $h_\varepsilon$ be the solution to \eqref{hODE1705} associated to $u_\varepsilon$. There exists a sequence $(\varepsilon_n)_{n\in \N}$ decreasing to 0, there exists $\widetilde{u}\in L^\infty(I;[0,D])$ such that $(u_{\varepsilon_n})_{n\in \N}$ converges weakly-star to $\widetilde u$ in $L^\infty(I;[0,D])$ and $(h_{\varepsilon_n})_{n\in \N}$ converges strongly to $\widetilde h\in H^1(0,2\pi)$ and uniformly in $\mathscr{C}^0([0,2\pi])$ as $n\to +\infty$. Furthermore, one has necessarily $\int_I u_{\varepsilon_n}(\theta)e^{i\theta}\, d\theta=\alpha+i\beta)+\operatorname{O}(\varepsilon_n)$ and therefore, $\widetilde{u}$ belongs to $\widetilde{\mathcal{R}}_D$.
\item Let $\xi\in \mathcal{T}_{\widetilde u}$. There exists $\xi_n\in \mathcal{T}_{u_{\varepsilon_n}}$ such that $(\xi_n)_{n\in \N}$ converges weakly-star to $\xi$ as $n\to +\infty$ (this follows from the definition of the tangent cone and the fact that pointwise inequalities are preserved by the weak-star convergence).  
\end{itemize}

Let $\xi \in \mathcal{T}_{\widetilde{u}}$. According to the computations above, the necessary first order optimality conditions for the penalized problem \eqref{pbpen1805} read: for every $n\in \N$, since $\xi \in \mathcal{T}_{u_{\varepsilon_n}}$, one has 
$$
\int_I\left(2h_{\varepsilon_n}(\theta)-D+\alpha_n\cos \theta+\beta_n\sin \theta\right)\xi_n (\theta)\, d\theta\leq 0,
$$
where
$$
\alpha_n=\frac{1}{\varepsilon_n}\left(\int_I u_{\varepsilon_n}(s)\cos{s}\, ds -\alpha\right)\quad \text{and}\quad
\beta_n=\frac{1}{\varepsilon}\left(\int_I u_{\varepsilon_n}(s)\sin{s}\, 
ds -\beta\right).
$$
Let us divide the inequality above by $\sqrt{1+\alpha_n^2+\beta_n^2}$. Since the quantities $\sqrt{1+\alpha_n^2+\beta_n^2}$,  \\
$\alpha_n/\sqrt{1+\alpha_n^2+\beta_n^2}$ and $\beta_n/\sqrt{1+\alpha_n^2+\beta_n^2}$ are uniformly bounded with respect to $n$, one can assume that they respectively converge (up to a new extraction) to $\mu\geq 0$, $\bar \alpha\in \R$ and $\bar \beta\in \R$ such that $(\mu, \bar \alpha,\bar\beta)\neq (0,0,0)$. Since $\xi$ was arbitrarily chosen, by passing to the limit as $n\to +\infty$, we get at the end that the first order necessary conditions associated to Problem~\eqref{hODE1705} read
\begin{equation}x
\forall \xi\in \mathcal{T}_{\widetilde{u}}, \quad \int_I\left(\mu(2\widetilde{h}(\theta)-D)+\bar \alpha \cos \theta+\bar \beta \sin \theta\right)\xi (\theta)\, d\theta\leq 0.
\end{equation}
Now, since $\widetilde{J}_{\varepsilon_n}(u)= \widetilde{J}(u)$ for every $u\in \widetilde{\mathcal{R}}_D$, it follows that
$$
\widetilde{J}_{\varepsilon_n}(u_\varepsilon)=\max_{u\in L^\infty(I;[0,D])}\widetilde{J}_{\varepsilon_n}(u)\geq \max_{u\in L^\infty(I;[0,D])}\widetilde{J}(u)=\widetilde{J}(u^\star)\geq \widetilde{J}(u_{\varepsilon_n}).
$$
Passing to the limit in this inequality yields $\widetilde{J}(u^\star)\geq \widetilde{J}(\widetilde{u})$. Using that $\widetilde{u}$ belongs to $\widetilde{\mathcal{R}}_D$, we infer that $\widetilde{u}$ solves Problem~\eqref{pbaux:metz1600}. Therefore, we can assume without loss of generality that $\widetilde{u}=u^\star$.

\section*{Acknowledgements} 
We want to thank warmly the anonymous referee who allows us to improve the writing and clarity of this paper.
All three authors were partially supported by the ANR Project ANR-18-CE40-0013 SHAPO ``SHAPe Optimization''. The third author was partially supported by the Project ``Analysis and simulation of optimal shapes - application to lifesciences'' of the Paris City Hall.

\bibliographystyle{abbrv}
\bibliography{mybibfile}

\begin{thebibliography}{10}

\bibitem{BelOud}
M.~Belloni and E.~Oudet.
\newblock The minimal gap between {$\Lambda_2(\Omega)$} and
  {$\Lambda_\infty(\Omega)$} in a class of convex domains.
\newblock {\em J. Convex Anal.}, 15(3):507--521, 2008.

\bibitem{Blaschke1}
W.~{Blaschke}.
\newblock {Konvexe Bereiche gegebener konstanter Breite und kleinsten Inhalts.}
\newblock {\em {Math. Ann.}}, 76:504--513, 1915.

\bibitem{Blaschke2}
W.~{Blaschke}.
\newblock {Eine Frage \"uber konvexe K\"orper.}
\newblock {\em {Jahresber. Dtsch. Math.-Ver.}}, 25:121--125, 1916.

\bibitem{bonnesen}
T.~Bonnesen and W.~Fenchel.
\newblock {\em Theory of convex bodies}.
\newblock BCS Associates, Moscow, ID, 1987.
\newblock Translated from the German and edited by L. Boron, C. Christenson and
  B. Smith.

\bibitem{Hernandez03Mon}
K.~B\"{o}r\"{o}czky, Jr., M.~A. Hern\'{a}ndez~Cifre, and G.~Salinas.
\newblock Optimizing area and perimeter of convex sets for fixed circumradius
  and inradius.
\newblock {\em Monatsh. Math.}, 138(2):95--110, 2003.

\bibitem{Brandenberg}
R.~Brandenberg and B.~Gonz\'{a}lez~Merino.
\newblock A complete 3-dimensional {B}laschke-{S}antal\'{o} diagram.
\newblock {\em Math. Inequal. Appl.}, 20(2):301--348, 2017.

\bibitem{MR3941924}
A.~Delyon, A.~Henrot, and Y.~Privat.
\newblock Nondispersal and density properties of infinite packings.
\newblock {\em SIAM J. Control Optim.}, 57(2):1467--1492, 2019.

\bibitem{HeTsi}
M.~Henk and G.~A. Tsintsifas.
\newblock Some inequalities for planar convex figures.
\newblock {\em Elem. Math.}, 49(3):120--125, 1994.

\bibitem{HenrPierre}
A.~Henrot and M.~Pierre.
\newblock {\em Shape Variation and Optimization}, volume~28 of {\em Tracts in
  Mathematics}.
\newblock European Mathematical Society, Z\"urich, 2018.

\bibitem{Hernandez00AMM}
M.~A. Hern\'{a}ndez~Cifre.
\newblock Is there a planar convex set with given width, diameter, and
  inradius?
\newblock {\em Amer. Math. Monthly}, 107(10):893--900, 2000.

\bibitem{Hernandez02Arch}
M.~A. Hern\'{a}ndez~Cifre.
\newblock Optimizing the perimeter and the area of convex sets with fixed
  diameter and circumradius.
\newblock {\em Arch. Math. (Basel)}, 79(2):147--157, 2002.

\bibitem{Hernandez01}
M.~A. Hern\'{a}ndez~Cifre and G.~Salinas.
\newblock Some optimization problems for planar convex figures.
\newblock {\em Rend. Circ. Mat. Palermo (2) Suppl.}, (70, part I):395--405,
  2002.
\newblock IV International Conference in ``Stochastic Geometry, Convex Bodies,
  Empirical Measures $\&$ Applications to Engineering Science'', Vol. I
  (Tropea, 2001).

\bibitem{Hernandez00DCG}
M.~A. Hern\'{a}ndez~Cifre and S.~Segura~Gomis.
\newblock The missing boundaries of the {S}antal\'{o} diagrams for the cases
  {$(d,\omega,R)$} and {$(\omega,R,r)$}.
\newblock {\em Discrete Comput. Geom.}, 23(3):381--388, 2000.

\bibitem{YBol}
I.~M. Jaglom and V.~G. Boltjanski\u{\i}.
\newblock {\em Convex figures}.
\newblock Translated by Paul J. Kelly and Lewis F. Walton. Holt, Rinehart and
  Winston, New York, 1960.

\bibitem{Santalo}
L.~A. Santal\'{o}.
\newblock On complete systems of inequalities between elements of a plane
  convex figure.
\newblock {\em Math. Notae}, 17:82--104, 1959/61.

\bibitem{Schneider}
R.~Schneider.
\newblock {\em Convex bodies: the {B}runn-{M}inkowski theory}, volume 151 of
  {\em Encyclopedia of Mathematics and its Applications}.
\newblock Cambridge University Press, Cambridge, expanded edition, 2014.

\bibitem{awyong}
P.~R. Scott and P.~W. Awyong.
\newblock Inequalities for convex sets.
\newblock {\em JIPAM. J. Inequal. Pure Appl. Math.}, 1(1):Article 6, 6, 2000.

\bibitem{YangZhang}
Y.~Yang and D.~Zhang.
\newblock Two optimisation problems for convex bodies.
\newblock {\em Bull. Aust. Math. Soc.}, 93(1):137--145, 2016.

\end{thebibliography}

\end{document}